\newcommand{\Gal}{{\rm Gal}}
\newcommand{\Ord}{{\rm ord}}
\newcommand{\nr}{\mathrm{unr}}
\newcommand{\GL}{{\rm GL}}
\newcommand{\Id}{{\rm Id}}
\newcommand{\Q}{\mathbb Q}
\newcommand{\N}{\mathbb N}
\newcommand{\C}{\mathbb C}
\newcommand{\Z}{\mathbb Z}
\newtheorem{X}{X}[section]
\newtheorem{cor}[X]{Corollary}
\newtheorem{lem}[X]{Lemma}
\newtheorem{prop}[X]{Proposition}
\newtheorem{thm}[X]{Theorem}
\theoremstyle{definition}
\newtheorem{defi}[X]{Definition}
\theoremstyle{remark}
\newtheorem{rk}{Remark}[section]
\newtheorem*{exe}{Example}
\begin{document}

\title{On the influence of the Galois group structure\\ on the Chebyshev bias in number fields}
\author{Mounir Hayani}
\date{}
\maketitle

\begin{abstract}
    In this paper we produce unconditionally new instances of Galois number field extensions exhibiting strong discrepancies in the distribution of Frobenius elements among conjugacy classes of the Galois group. We first prove an inverse Galois theoretic statement showing a dichotomy between ``extreme Chebyshev biases'' and ``equal prime ideal counting''. We further introduce a group theoretic property that implies extreme biases. In the case of abelian extensions this leads to a complete characterization of Galois groups enabling extreme biases. In the case where the Galois group is a $p$-group, a simple criterion is deduced for the existence of extreme biases, and associated effective statements of Linnik type are obtained.
\end{abstract}

\section{Introduction and statement of results}\label{Mainresults}

\subsection{Background on Chebyshev's bias}\label{subsec:cheb}

In 1896, de La Vallée Poussin proved that for every $q \geq 2$, prime numbers are uniformly distributed among invertible residue classes modulo $q$. More precisely, denoting $$\pi(x;q,a)=\#\{p\  \text{prime}\colon p \leq x,\ p\equiv a \text{ mod }q\}\qquad(x\geq 2),$$
the Prime Number Theorem in arithmetic progressions asserts that

\[
\pi(x;q,a) \sim \frac{1}{\varphi(q)}\frac{x}{\log(x)} \qquad(x\to\infty)\,.
\]

Half a century before, examining the specific case of $q=4$, Chebyshev sought to conduct a more detailed comparison between prime numbers $p\equiv 1 \text{ mod }4$ and prime numbers $p\equiv 3 \text{ mod }4$. In a letter sent to Fuss \cite{Chebyshev}, Chebyshev noted that for a given $x\geq 2$, the quantity $\pi(x;4,3)$ seems to be greater  than $\pi(x;4,1)$. Since then, this discrepancy has been referred to as \emph{Chebyshev's Bias}.

For any $q\geq 3$, Rubinstein and Sarnak studied in~\cite{RS94} the general question of Chebyshev's bias between invertible classes modulo $q$. Under the generalized Riemann hypothesis  Rubinstein and Sarnak proved, for $q\geq 3$, and $a\ne b$ two invertible classes modulo $q$, the existence of the \emph{logarithmic density}
$$\delta(q;a,b):= \underset{X \rightarrow +\infty}{\lim}\frac{1}{\log(X)} \int_{2}^{X}  \mathds 1_{\pi(x;q,a)>\pi(x;q,b)}(t) \frac{\mathrm{d}t}{t}\,.$$ 
Additionally assuming a linear independence assumption on non-trivial zeros of Dirichlet $L$-functions, they confirm Chebyshev's original observation of the existence of a bias towards $a$ (that is $\delta(q;a,b)>\frac 12$) if and only if $a$ is not a quadratic residue mod $q$ and $b$ is a quadratic residue mod $q$.

\medskip
Following the ideas of Rubinstein and Sarnak, Ng \cite{Ng} studied Chebyshev's bias in the context of number fields. 
Let $L/K$ be a Galois extension of number fields, with group $G$. Consider $t\colon G\rightarrow \C$ a class function on $G$. Define the prime ideal counting function for any $x\geq 2$:
\[
\pi(x;L/K;t):=\underset{\substack{\frak{p}\lhd O_K \\ N\frak{p}\leq x}} {\sum}t(\varphi_{\frak{p}})\,.
\]
Here the sum runs over non-zero prime ideals $\mathfrak p$ of $O_K$ and $\varphi_\mathfrak p$ denotes the corresponding Frobenius conjugacy class defined up to inertia. In order to unify the treatment of ramified and unramified prime ideals we use the standard definition (\emph{e.g.} used in~\cite{FJ}*{page 28}) valid for any $k\geq1$: 
$$t(\varphi_{\frak{p}}^k):=\frac{\sum_{s \in I_{\frak{P/p}}}  t(\varphi_{\frak{p}}^k s)}{|I_{\frak{P/p}}|}\,,$$
where $\frak{P}$ is a prime of $O_L$ lying above $\frak{p}$ and $I_{\frak{P/p}}$ denotes the inertia group attached to $\frak{P/p}$.

For two conjugacy classes $C_1,C_2$ of $G$ we define the class function on $G$ : 
\begin{equation}\label{def:C1C2}
t_{C_1,C_2}=\frac{|G|}{|C_1|}\mathds 1_{C_1}-\frac{|G|}{|C_2|}\mathds{1}_{C_2}\,,
\end{equation}
where $\mathds 1_{C_i}$ is the indicator function of $C_i$.
Denoting $\mathcal{P}_{C_1,C_2}:=\{x\geq 2\colon \pi \left(x;L/K;t_{C_1,C_2}\right)>0\}$, the logarithmic density of $\mathcal{P}_{C_1,C_2}$ (when it exists) is the generalization to Galois extensions of number fields of $\delta(q;a,b)$, that is $$\delta(\mathcal{P}_{C_1,C_2}):=\underset{X \to +\infty}{\lim}\frac{1}{\log X}\int_2^{X} \mathds 1_{\mathcal{P}_{C_1,C_2}}(x)\frac{\mathrm{d}x}{x}\,.$$
In his Thesis \cite{Ng}, Ng studied densities of type $\delta(\mathcal{P}_{C_1,C_2})$. Assuming Artin's holomorphy conjecture, GRH, as well as a linear independence assumption on the set of zeros of Artin $L$-functions, Ng proved in \cite{Ng} the existence of $\delta(\mathcal{P}_{C_1,C_2})$ and computed its value in several explicit cases. Since then, the study of $\delta(\mathcal{P}_{C_1,C_2})$ has seen numerous developments (see \emph{e.g.}~\cite{Dev,Bail,FJ}). Devin (\cite{Dev}*{Th. 2.5}) establishes conditionally to assumptions of the same type (although substantially weaker than the hypotheses made in~\cite{Ng}) that $0<\delta(\mathcal{P}_{C_1,C_2})<1$. In particular the expected (but unproved) diophantine properties of Artin $L$-function zeros prevent extreme biases occuring (see Definition~\ref{def:extbias}).

In this paper we will study in the context of normal number field extensions, settings where the assumptions of~\cite{Dev}*{Th. 2.5} are not met, thereby producing the following phenomenon. 

\begin{defi}[Extreme Chebyshev bias]\label{def:extbias}
We say that the Galois extension $L/K$ has an extreme Chebyshev bias relatively to $(C_1,C_2)$ where $C_1$, $C_2$ are two conjugacy classes of $\Gal(L/K)$, if  up to exchanging $C_1$ with $C_2$, one has $\pi(x;L/K;t_{C_1,C_2})>0$ for big enough $x$ hence in particular $\delta(\mathcal{P}_{C_1,C_2})$ exists and is equal to $1$.
\end{defi}

In \cite{Bail}*{\S 1.3}, Bailleul explains that Ng's linear independence (LI) assumption cannot hold in general for a relative extension of number fields $L/K$. Instead, Bailleul suggests in the case $K=\Q$ a refined version of LI under which he computes $\delta(\mathcal{P}_{C_1,C_2})$ for some families of normal extensions (with dihedral and quaternion Galois groups). In \cite{FJ}, Fiorilli and Jouve give a detailed probabilistic study of the fluctuations of the error term in the Chebotarev density Theorem within families of normal number field extensions and obtain general estimates for $\delta(\mathcal{P}_{C_1,C_2})$ that they apply (under assumptions analogous to those used in~\cite{Ng} or~\cite{Bail}) to various Galois group structures (Abelian, supersolvable, $\mathfrak S_n$,...).

In strong contrast with the framework of~\cite{Dev,Bail,FJ}, Fiorilli and Jouve (\cite{FJ2}) show unconditionally that there exists infinitely many Galois extensions of number fields having an extreme Chebyshev bias. More precisely they prove, for a tower $L/K/\Q$ of number fields such that $L/\Q$ is Galois with group $G^+$, that if two conjugacy classes $C_1$ and $C_2$ in $G=\Gal(L/K)$ \emph{do not} have the same number of square roots\footnote{One easily sees that the count of square roots is a class function on any group.} in $G$ but are contained in a common conjugacy class $C^+$ of $G^+$, then $L/K$ has an extreme Chebyshev bias with respect to $C_1$ and $C_2$ (see~\cite{FJ2}*{Th. 1.1}).
Natural questions that arise are the following:
\begin{enumerate}
    \item[\bf{a)}] Can we produce extreme Chebyshev biases in the case of conjugacy classes with \emph{the same} number of square roots and contained in a common conjugacy class of $G^+$ ?
    \item[\bf{b)}] Is it possible to classify the group structures enabling such 
    extreme biases (see~\cite{FJ2}*{\S 2, Remark})? 
\end{enumerate}
The main goal in this paper is to answer these questions. We now state our main results.

\subsection{Statement of the main results}

Our main contributions can be split into two different settings: first we let $k$ be an arbitrary number field and $G$ be an arbitrary finite group. Denoting by $\mathfrak S(G)$ the group of all permutations of elements of $G$, our first results are stated in the setting of the \emph{Cayley embedding} of $G$ into $\mathfrak S(G)$ (\emph{i.e.} we let $G$ act on itself by left translation giving rise to an injective group homomorphism $G\to\mathfrak S(G)$). Letting $L$ be a Galois extension of $k$ of group $G^+\simeq \mathfrak S(G)$ and denoting $K=L^G$, we obtain unconditional results on discrepancies for the distribution of Frobenius elements in conjugacy classes of $G=\Gal(L/K)$. The second framework for our main results aims at relaxing the assumption according to which $G\hookrightarrow G^+$ is the Cayley embedding. For example, we will produce an infinite family of abelian groups $G$ exhibiting an extreme Chebyshev bias that are Galois groups of an extension $L/K$ such that the degree of $K$ over $\Q$ is exactly $4$.
We will also produce extreme Chebyshev biases for subextensions of Galois extensions $L/k$ of prime power degree.

\medskip


We will use the following notation: given a group $G$ and $g\in G$, we denote by $\Ord(g)$ the order of $g$ in $G$. Moreover, if $D\subset G$ is a conjugacy invariant subset, we will write $\pi(x;L/K,D)$ as a shorthand for $\pi(x;L/K,\mathds 1_D)$.

\subsubsection{Cayley embedding and Chebyshev biases in number fields}

Our first main result uncovers the following dichotomy phenomenon. 

\begin{thm}\label{princ}
    Let $G$ be a finite group and let $k$ be a number field. Consider the injection $G \hookrightarrow \frak S (G)=:G^+$ (the group of permutations of $G$), given by the action of $G$ on itself by left translation. Let $L$ denote a Galois extension of $k$ with group $G^+$ and let $K=L^G$ be the subextension of $L/k$ fixed by $G$. Then, for all $a,b\in G$ such that $\Ord(a)=\Ord(b)$, with respective conjugacy classes $C_a$ and $C_b$, one of the following cases occurs: 
    \begin{enumerate}
        \item either for all $x\geq 2$: $$\frac{1}{|C_a|}\pi(x;L/K;C_a)=\frac{1}{|C_b|}\pi(x;L/K;C_b)\,,$$
        \item or there exists $A>0$ such that, up to exchanging $C_a$ and $C_b$, we have for all $x\geq A$, 
        \[ \frac{1}{|C_a|}\pi(x;L/K;C_a)>\frac{1}{|C_b|}\pi(x;L/K;C_b)\,.
        \]
        Thus $L/K$ has an extreme Chebyshev bias with respect to $C_a$ and $C_b$.
    \end{enumerate}
\end{thm}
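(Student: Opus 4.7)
The plan is to exploit the Cayley embedding via a base-change identity for a weighted prime-power counting function. For a class function $f$ on $G$, set $\Pi(x;L/K;f):=\sum_{\mathfrak q}\sum_{k\geq 1}\frac{1}{k}f(\varphi_{\mathfrak q}^{k})\mathds 1_{\{N\mathfrak q^k\leq x\}}$, with the analogous definition for $L/k$. The key structural input is that under the Cayley embedding each $g\in G$ acts as a permutation of $G$ with $|G|/\Ord(g)$ cycles of length $\Ord(g)$; hence $a$ and $b$, sharing their order, have the same cycle type and are conjugate in $G^{+}=\mathfrak S(G)$. Denote by $D$ their common $G^{+}$-conjugacy class.

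First, I would establish the induction identity
\[\Pi(x;L/K;f)=\Pi(x;L/k;\mathrm{Ind}_{G}^{G^{+}}f),\]
by decomposing, for each prime $\mathfrak p$ of $k$, the primes of $K$ above $\mathfrak p$ as orbits of $\langle\varphi_{\mathfrak p}\rangle$ on $G^{+}/G$ and rewriting $\varphi_{\mathfrak q}^{k}$ accordingly. A direct character-theoretic computation gives that $\mathrm{Ind}_{G}^{G^{+}}(\mathds 1_{C_a}/|C_a|)$ is supported on $D$ with constant value $|C_{G^{+}}(a)|/|G|$; since $|C_{G^{+}}(a)|=|C_{G^{+}}(b)|$ and $D_{a}=D_{b}=D$, this coincides with $\mathrm{Ind}_{G}^{G^{+}}(\mathds 1_{C_b}/|C_b|)$. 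Consequently
\[\tfrac{1}{|C_a|}\Pi(x;L/K;C_a)=\tfrac{1}{|C_b|}\Pi(x;L/K;C_b)\qquad(x\geq 2).\]

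To descend from $\Pi$ to $\pi$, I would Möbius-invert: writing $f^{(n)}(g):=f(g^{n})$, one checks that $\pi(x;L/K;f)=\sum_{n\geq 1}\frac{\mu(n)}{n}\Pi(x^{1/n};L/k;\mathrm{Ind}_{G}^{G^{+}}f^{(n)})$. Applied to $f=\mathds 1_{C_a}/|C_a|-\mathds 1_{C_b}/|C_b|$, the $n=1$ term vanishes by the previous step, leaving
\[\delta(x):=\frac{\pi(x;L/K;C_a)}{|C_a|}-\frac{\pi(x;L/K;C_b)}{|C_b|}=\sum_{n\geq 2}\frac{\mu(n)}{n}\Pi(x^{1/n};L/k;\Delta_{n}),\]
where $\Delta_{n}:=\mathrm{Ind}_{G}^{G^{+}}(\mathds 1_{C_a}^{(n)}/|C_a|-\mathds 1_{C_b}^{(n)}/|C_b|)$ encodes the imbalance between $n$-th roots of $C_a$ and $C_b$. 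If $\Delta_{n}\equiv 0$ for every $n\geq 2$ then $\delta\equiv 0$, yielding case~(1).

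The main obstacle is the alternative. Letting $n_{0}\geq 2$ be the smallest index with $\Delta_{n_{0}}\not\equiv 0$, the dominant contribution to $\delta(x)$ is $\frac{\mu(n_{0})}{n_{0}}\Pi(x^{1/n_{0}};L/k;\Delta_{n_{0}})$, whose expected leading behaviour is of order $x^{1/n_{0}}/\log x$ with a sign determined by the mean of $\Delta_{n_{0}}$. The hard part is to certify unconditionally that this main term dominates the tail $n>n_{0}$ and precludes sign changes for all large $x$; the Cayley hypothesis $G^{+}=\mathfrak S(G)$ is essential here, as it constrains the relevant Artin $L$-functions on $L/k$ to pieces that admit effective Chebotarev-type estimates. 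Once sign-definiteness of the main term is secured, case~(2) follows after possibly swapping the roles of $C_a$ and $C_b$.
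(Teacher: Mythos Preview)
Your framework matches the paper's (the induction identity and M\"obius inversion reproduce Proposition~\ref{gen}), but there is a genuine gap in case~(2), and your diagnosis of where the difficulty lies is wrong. By the ordinary Prime Ideal Theorem, the leading behaviour of $\tfrac{\mu(n_0)}{n_0}\Pi(x^{1/n_0};L/k;\Delta_{n_0})$ is governed by the mean $\langle \Delta_{n_0},1\rangle_{G^+}=\langle t, r_{n_0}\rangle_G$ (Frobenius reciprocity, as in Proposition~\ref{Ind-Rac}). You only know $\Delta_{n_0}\not\equiv 0$; nothing in your argument forces its \emph{mean} to be non-zero. If $\langle t,r_{n_0}\rangle=0$ the purported main term is $o(x^{1/n_0}/\log x)$ and no unconditional estimate separates it from the tail. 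Remark~\ref{rem:converse}(1) exhibits an explicit pair $(G,G^+)$ with $t^+=0$, $\langle t,r_2\rangle=0$, yet $(t\circ f_2)^+\neq 0$; so $n_0=2$ there while the Chebotarev coefficient vanishes, and the obstruction is real.

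The Cayley hypothesis is \emph{not} used to obtain effective Chebotarev-type estimates; it enters through the purely group-theoretic condition $(\star)$ (two elements of $G$ are $G^+$-conjugate iff they have equal order, which your cycle-type observation indeed establishes) together with Lemma~\ref{récip}: under $(\star)$, if $\langle t,r_k\rangle=0$ for every divisor $k$ of $\ell$ then $(t\circ f_\ell)^+=0$. This combinatorial lemma --- proved via Lemma~\ref{nb-rac} by tracking $\ell$-th roots according to their order --- forces your $n_0$ to coincide with the least square-free $d$ for which $\langle t,r_d\rangle\neq 0$, so the Chebotarev main term is automatically non-vanishing and the tail is dominated by the trivial bound $\ll x^{1/(n_0+1)}$. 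What is missing from your outline is Lemma~\ref{récip}, not any analytic input.
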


When $G$ is not isomorphic to the symmetric groups $\frak{S}_1,\frak{S}_2,\frak{S}_3$, then $G$ always contains two elements $a$ and $b$ of the same order such that their respective conjugacy classes $C_a$ and $C_b$ are distinct (see \cite{Feit}*{Corollary B.2}). Thus, given any group $G$ of order at least $3$ and not isomorphic to $\mathfrak S_3$,  Theorem~\ref{princ} answers a non-trivial Chebyshev bias question in a specific Galois extension $L/K$ with Galois group $G$.

The proof of Theorem~\ref{princ} will be obtained as a consequence of a more general statement (Theorem~\ref{OC-thm}) which makes precise under which condition (1) (resp. (2)) of Theorem~\ref{princ} occurs. This distinction relies entirely on group theoretic properties of the conjugacy classes $C_1$ and $C_2$. The following statement gives an instance of both alternatives appearing in Theorem~\ref{princ}.

\begin{cor}\label{Example1}
    With notation as in Theorem  \ref{princ}, we have:
    \begin{enumerate}
        \item If $G=Q_8$ is the quaternion group of order $8$, then, for all $a,b \in G$ such that $\Ord(a)=\Ord(b)$, with respective conjugacy classes $C_a$ and $C_b$, we have for all $x\geq 2$: $$\pi(x;L/K;C_a)=\pi(x;L/K;C_b)\,.$$
        
        \item If $G=Q_8\times \Z/4\Z$, then denoting $a=(1,\Bar{2})$,$ b=(-1,0)$ and $C_a$, $C_b$ their respective conjugacy classes, the extension $L/K$ has an extreme Chebyshev bias relatively to $(C_a,C_b)$. More precisely, for all $x$ sufficiently large, we have: $$\pi(x;L/K;C_a)>\pi(x;L/K;C_b)\,.$$
    \end{enumerate} 
\end{cor}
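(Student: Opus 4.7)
The plan is to deduce both parts from the refined criterion of Theorem~\ref{OC-thm}, which makes Theorem~\ref{princ}'s dichotomy effective by providing a purely group-theoretic condition on the pair $(a,b)$. Guided by the explicit formula for $\pi(x;L/K;t_{C_a,C_b})$ (equivalently $\psi(x;L/K;t_{C_a,C_b})$), where the $k$-th power sum in the expansion contributes a main term proportional to $N_k(a)-N_k(b)$, with $N_k(g):=|\{h\in G:h^k=g\}|$, the criterion should state: alternative~(1) of Theorem~\ref{princ} holds if and only if $N_k(a)=N_k(b)$ for every $k\geq 1$, and alternative~(2) holds otherwise. The plan is then to verify the equality in part~(1) and exhibit a failure of it in part~(2).

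For part~(1), take $G=Q_8$. If $\Ord(a)=\Ord(b)\in\{1,2\}$ then $C_a=C_b$ and there is nothing to prove. For $\Ord(a)=\Ord(b)=4$, use the classical fact that $\mathrm{Aut}(Q_8)\cong\mathfrak S_4$ acts transitively on the six elements of order $4$: choosing $\varphi\in\mathrm{Aut}(Q_8)$ sending $a$ to $b$, the map $h\mapsto\varphi(h)$ is a bijection from $\{h:h^k=a\}$ onto $\{h:h^k=b\}$ for every $k$, so $N_k(a)=N_k(b)$ for all $k$, and alternative~(1) follows. For part~(2), take $G=Q_8\times\Z/4\Z$, $a=(1,\bar 2)$, $b=(-1,0)$: both are central of order $2$, so $|C_a|=|C_b|=1$. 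Writing $g=(h,\bar m)$ we have $g^2=(h^2,\overline{2m})$, hence $g^2=a$ is equivalent to $h\in\{\pm 1\}$ and $\bar m\in\{\bar 1,\bar 3\}$, yielding $N_2(a)=2\cdot 2=4$, while $g^2=b$ is equivalent to $h\in\{\pm i,\pm j,\pm k\}$ and $\bar m\in\{\bar 0,\bar 2\}$, yielding $N_2(b)=6\cdot 2=12$. As $N_2(a)\neq N_2(b)$, alternative~(2) applies; the direction of the bias is the expected one, consistent with the classical mod-$4$ Chebyshev phenomenon in which the class with fewer square roots is the one favored, hence $\pi(x;L/K;C_a)>\pi(x;L/K;C_b)$ for $x$ large.

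The main obstacle is rigorously identifying the criterion of Theorem~\ref{OC-thm}: it should emerge from a character-theoretic expansion of the difference $\pi(x;L/K;C_a)-\pi(x;L/K;C_b)$ combined with the defining property of the Cayley embedding (under which any two same-order elements of $G$ are conjugate in $G^+=\mathfrak S(G)$, their left-translation cycle types on $G$ being identical). Once that criterion is in hand, the two verifications above are elementary and specific to the groups under consideration; the non-trivial content of part~(1) is the symmetry provided by the rich automorphism group of $Q_8$, while the non-trivial content of part~(2) is that the central elements $(1,\bar 2)$ and $(-1,0)$, although of the same order, sit in structurally different positions of $G$ that are detected as early as $k=2$.
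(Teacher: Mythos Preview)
Your approach is correct and essentially matches the paper's: both parts are deduced from Theorem~\ref{OC-thm} by checking the root-counting criterion. Two small remarks. First, the actual criterion in Theorem~\ref{OC-thm} involves only \emph{square-free} integers $\ell$ (alternative~(1) holds iff $r_\ell(C_a)=r_\ell(C_b)$ for every square-free $\ell$), not all $k\geq 1$ as you guessed; this does not affect your applications, since your automorphism argument in~(1) gives equality for all $k$ (hence for square-free $\ell$), and in~(2) you detect inequality at $\ell=2$, which is square-free. Second, for part~(1) the paper argues by direct computation (for $\ell$ odd square-free $r_\ell=1$ on order-$4$ elements since $f_\ell$ is a bijection, for $\ell$ even square-free $r_\ell=0$ since order-$4$ elements of $Q_8$ are non-squares), whereas your use of the transitivity of $\mathrm{Aut}(Q_8)$ on order-$4$ elements is a clean conceptual alternative that immediately yields $r_k(a)=r_k(b)$ for all $k$. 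The direction of the bias in~(2) is justified in the paper by the explicit asymptotic of Proposition~\ref{gen}(1): the leading term is $\mu(2)\langle t_{C_a,C_b},r_2\rangle\, x^{1/2}/\log x=(r_2(b)-r_2(a))\,x^{1/2}/\log x>0$, confirming that the class with fewer square roots is favored.
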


As a second illustration, we present another family of groups giving rise to an extreme Chebyshev bias.


\begin{cor}\label{Example2}
     With notation as in Theorem  \ref{princ}, assume that $G=\left(\Z/p^m\Z \rtimes \Z/p^n\Z \right)\times H$, where $p$ is a prime, $1\leq n <m$ and $(H,+)$ is a finite group (\textit{e.g.} $G$ is a dihedral group). Let $a=(0,1,0)$, $b=(p^{m-n},0,0)$ and let $C_a$ and $C_b$ be their respective conjugacy classes. Then $L/K$ has an extreme Chebyshev bias relatively to $(C_a,C_b)$. More precisely, for all $x$ sufficiently large, we have: $$\frac{1}{|C_a|}\pi(x;L/K;C_a)>\frac{1}{|C_b|}\pi(x;L/K;C_b)\,.$$
\end{cor}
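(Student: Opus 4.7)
The plan is to apply Theorem~\ref{princ} to the pair $(C_a,C_b)$ and rule out case~(1) of the dichotomy, thereby forcing case~(2) and an extreme bias. The hypothesis $\Ord(a)=\Ord(b)$ is easily verified: a direct computation inside the semidirect product yields $a^k=(0,k,0)$ and $b^k=(kp^{m-n},0,0)$, so both orders equal $p^n$.

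Next, I would exhibit a group-theoretic invariant stable under $\mathrm{Aut}(G)$ that separates $C_a$ from $C_b$, in order to rule out case~(1). For the Cayley embedding one has $N_{G^+}(G)/G\simeq\mathrm{Aut}(G)$, and the refined statement Theorem~\ref{OC-thm} should identify case~(1) with the condition that $C_a$ and $C_b$ lie in the same $\mathrm{Aut}(G)$-orbit of conjugacy classes. The distinguishing invariant here is \emph{being a $p^{m-n}$-th power in $G$}: clearly $b=(1,0,0)^{p^{m-n}}$ is such a power, whereas $a$ is not, since the quotient map $G\twoheadrightarrow G/\bigl(\Z/p^m\Z\times\{0\}\times H\bigr)\simeq\Z/p^n\Z$ sends $a$ to $1\in\Z/p^n\Z$, and $1$ is not in the image of multiplication by $p^{m-n}$ on $\Z/p^n\Z$ (since $p\mid p^{m-n}$). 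Being a $p^{m-n}$-th power is preserved by conjugation and by group automorphisms, so $C_a$ and $C_b$ lie in distinct $\mathrm{Aut}(G)$-orbits; case~(1) is excluded and $L/K$ has an extreme Chebyshev bias relatively to $(C_a,C_b)$.

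Finally, to pin down the direction of the bias I would read off the sign from the formula underlying Theorem~\ref{OC-thm}, governed by ``root counts'': since $a$ admits no $p^{m-n}$-th root in $G$ while $b$ admits several, the normalized count for $C_a$ dominates that for $C_b$ for $x$ large, matching the claim. The main obstacle is precisely this second step: one must correctly extract from Theorem~\ref{OC-thm} both the $\mathrm{Aut}(G)$-orbit criterion for case~(1) and the sign of the bias in case~(2); once these are established, the remaining verifications are elementary.
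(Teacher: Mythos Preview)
Your proposal has a genuine gap: you misidentify the criterion distinguishing the two cases of Theorem~\ref{OC-thm}. Case~(1) there is \emph{not} characterized by $C_a$ and $C_b$ lying in the same $\mathrm{Aut}(G)$-orbit; it is the condition that $r_\ell(C_a)=r_\ell(C_b)$ for every \emph{square-free} integer $\ell\geq 1$. The implication goes only one way: same $\mathrm{Aut}(G)$-orbit forces equal root counts, but not conversely. Hence showing that $C_a$ and $C_b$ lie in different $\mathrm{Aut}(G)$-orbits (via your $p^{m-n}$-th-power invariant) does \emph{not} rule out case~(1). Moreover, $p^{m-n}$ is not square-free once $m-n\geq 2$, so your chosen invariant does not correspond to any of the functions $r_\ell$ appearing in the dichotomy.

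The fix is small and your own computation already contains it. The quotient map $G\twoheadrightarrow \Z/p^n\Z$ sends $a$ to $1$, which is not divisible by $p$; thus $a$ has no $p$-th root in $G$, i.e.\ $r_p(C_a)=0$. On the other hand $(p^{m-n-1},0,0)^p=b$, so $r_p(C_b)>0$. This yields a square-free $d=p$ with $r_d(C_a)\neq r_d(C_b)$, placing you in case~(2). For the \emph{direction} of the bias you still need $d=p$ to be the minimal such square-free integer, i.e.\ $r_\ell(C_a)=r_\ell(C_b)$ for all square-free $1<\ell<p$. This is not automatic since $H$ is arbitrary; the paper handles it by first treating the $p$-group $\Z/p^m\Z\rtimes\Z/p^n\Z$ (where Proposition~\ref{Psimp} makes the condition trivial) and then passing to the direct product with $H$ via Lemma~\ref{Direct-product}. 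With $d=p$ established as minimal, Proposition~\ref{gen}(1) gives the sign $\mu(p)\bigl(r_p(C_a)-r_p(C_b)\bigr)>0$, hence the inequality in the stated direction.
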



We end this paragraph by giving a group theoretic characterization of abelian groups for which the setting of the Cayley embedding produces an extreme Chebyshev bias (hence partially answering question {\bf b)} stated at the end of~\S\ref{subsec:cheb}).

Recall that a \textit{homocyclic $p$-group} is a direct product of cyclic $p$-groups of the same order (\emph{e.g.} elementary abelian $p$-groups, or cyclic $p$-groups).

\begin{thm}\label{Bias-Ab-gr}
     Let $G$ be a finite abelian group and let $k$ be a number field. Consider the injection $G \hookrightarrow \frak S (G)=:G^+$ (the group of permutations of $G$), given by the action of $G$ on itself by left translation. Let $L$ denote a Galois extension of $k$ with group $G^+$ and let $K=L^G$ be the subextension of $L/k$ fixed by $G$. Then there exists elements $a,b\in G$ with $\Ord(a)=\Ord(b)$ such that $L/K$ has an extreme Chebyshev bias relative to $(C_1=\{a\},C_2=\{b\})$ if and only if $G$ has a non-homocyclic $p$-Sylow subgroup for some prime $p$.
\end{thm}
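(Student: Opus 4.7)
The plan is to combine the dichotomy of Theorem~\ref{princ} with a symmetry argument on the Cayley embedding, together with the structure of finite abelian $p$-groups. The single key observation is that $\mathrm{Aut}(G)$ embeds into $G^+=\mathfrak S(G)$ as permutations of $G$ and satisfies $\phi\sigma_g\phi^{-1}=\sigma_{\phi(g)}$ for the left-translation $\sigma_g$; in particular $\mathrm{Aut}(G)\subseteq N_{G^+}(G)=G\rtimes\mathrm{Aut}(G)$ (the holomorph of $G$), so every $\phi\in\mathrm{Aut}(G)$ viewed in $\Gal(L/k)=G^+$ is a $k$-automorphism of $L$ preserving the subfield $K=L^G$.

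To prove the direction ``all $p$-Sylows homocyclic implies no extreme bias'', write $G=\bigoplus_p(\Z/p^{n_p}\Z)^{r_p}$. Then $\mathrm{Aut}(G)=\prod_p\GL_{r_p}(\Z/p^{n_p}\Z)$ acts transitively on elements of each given order: an element of order $p^k$ in $(\Z/p^{n_p}\Z)^{r_p}$ equals $p^{n_p-k}$ times a primitive vector, and $\GL_{r_p}(\Z/p^{n_p}\Z)$ is transitive on primitive vectors. Given $a,b\in G$ of the same order I pick $\phi\in\mathrm{Aut}(G)$ with $\phi(a)=b$. By the setup, $\phi$ permutes primes of $O_K$, preserves absolute norms, and transforms Frobenius as $\varphi_{\phi(\mathfrak p)}=\phi(\varphi_{\mathfrak p})$; using the equivariance $I_{\phi(\mathfrak P)/\phi(\mathfrak p)}=\phi(I_{\mathfrak P/\mathfrak p})$, this extends to ramified primes via the inertia-averaging convention of the introduction. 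The resulting norm-preserving bijection between primes with Frobenius $a$ and primes with Frobenius $b$ gives $\pi(x;L/K;\{a\})=\pi(x;L/K;\{b\})$ for every $x\geq 2$, which is case~(1) of Theorem~\ref{princ} and thus rules out extreme bias.

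Conversely, suppose a $p$-Sylow $P=\bigoplus_i\Z/p^{n_i}\Z$ of $G$ is non-homocyclic; after reordering take $n_1<n_2$. Writing $e_i$ for the generator of the $i$-th cyclic factor, I consider $a=e_1$ and $b=p^{n_2-n_1}e_2$ in $P\subseteq G$. Both have order $p^{n_1}$, and $a\notin pG$ whereas $b\in pG$; since every $\psi\in\mathrm{Aut}(G)$ preserves $pG$, no automorphism sends $a$ to $b$, and equivalently $a$ and $b$ have different $p$-th-root counts in $G$ (namely $0$ and $|G[p]|=p^{r_p}$). By the refined criterion of Theorem~\ref{OC-thm}, which characterises case~(1) of Theorem~\ref{princ} group-theoretically (generalising the square-root criterion of \cite{FJ2}*{Th.~1.1}), the pair $(a,b)$ fails case~(1); the dichotomy of Theorem~\ref{princ} then forces case~(2), yielding an extreme Chebyshev bias for $L/K$ with respect to $(\{a\},\{b\})$. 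The only delicate point in the first direction is matching the inertia-averaging convention at ramified primes with the equivariance of inertia under $\phi$, which is routine; the crucial external input is Theorem~\ref{OC-thm}, without which the dichotomy of Theorem~\ref{princ} alone would not yield ``distinct $\mathrm{Aut}(G)$-orbits implies case (2)''.
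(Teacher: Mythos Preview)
Your proof is correct. The converse direction (some non-homocyclic $p$-Sylow $\Rightarrow$ extreme bias) is essentially the paper's argument: construct $a,b$ of equal order with $r_p(a)=0<r_p(b)$ and invoke Theorem~\ref{OC-thm}(2).

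The forward direction, however, is genuinely different from the paper. The paper stays entirely inside the root-counting framework: assuming every $p$-Sylow is homocyclic, it shows that $r_\ell(a)=r_\ell(b)$ for every square-free $\ell$ (first for prime $\ell$ using that $f_p$ is a homomorphism, then reducing to primes by proving $m\mapsto r_m(a)$ is multiplicative), and then applies Theorem~\ref{OC-thm}(1). Your argument bypasses root-counting altogether: you observe that $\mathrm{Aut}(G)$ sits in the holomorph $N_{G^+}(G)\subset G^+$, hence acts on $K=L^G$ by $k$-automorphisms, and that for an abelian group with all Sylows homocyclic, $\mathrm{Aut}(G)\simeq\prod_p\GL_{r_p}(\Z/p^{n_p}\Z)$ is transitive on elements of each fixed order. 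The resulting $\phi$ with $\phi(a)=b$ gives a norm-preserving bijection on primes of $K$ carrying Frobenius $a$ to Frobenius $b$ (and, as you note, the equivariance $I_{\phi(\mathfrak P)/\phi(\mathfrak p)}=\phi I_{\mathfrak P/\mathfrak p}\phi^{-1}$ makes the inertia-averaging convention match at ramified primes), forcing $\pi(x;L/K;\{a\})=\pi(x;L/K;\{b\})$ for all $x$. This is more conceptual and avoids the multiplicativity computation; the paper's approach, on the other hand, makes explicit that the ``if and only if'' in Theorem~\ref{Bias-Ab-gr} is governed precisely by the root-counting dichotomy of Theorem~\ref{OC-thm}, which is the organizing principle of the whole paper. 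One minor remark: the clause ``no automorphism sends $a$ to $b$'' in your converse paragraph is true but not what you use---the operative input is $r_p(a)\neq r_p(b)$, which feeds directly into Theorem~\ref{OC-thm}(2).
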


This result has the following immediate consequence.
\begin{cor}
    Let $G$ be an abelian group such that for every prime $p$ the $p$-Sylow subgroup of $G$ is homocyclic (\textit{e.g.} $G$ is itself a homocyclic $p$-group). Then, case (1) of Theorem~\ref{princ} holds: for all $a,b\in G$ such that $\Ord(a)=\Ord(b)$ and for all $x\geq 2$, we have : $$\pi(x;L/K;\{a\})=\pi(x;L/K;\{b\})\,.$$
\end{cor}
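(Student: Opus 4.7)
The plan is simply to combine Theorem~\ref{princ} with Theorem~\ref{Bias-Ab-gr} by contraposition. Since $G$ is abelian, every conjugacy class in $G$ is a singleton, so for any $a,b \in G$ the conjugacy classes $C_a = \{a\}$ and $C_b = \{b\}$ have size one, and the statement to prove reduces to $\pi(x;L/K;\{a\}) = \pi(x;L/K;\{b\})$ for all $x \geq 2$.

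First, I would fix arbitrary $a,b \in G$ with $\Ord(a) = \Ord(b)$ and invoke Theorem~\ref{princ}. This yields the dichotomy: either alternative~(1) holds (which is exactly the desired equality, since $|C_a| = |C_b| = 1$), or alternative~(2) holds, meaning $L/K$ has an extreme Chebyshev bias relative to $(\{a\},\{b\})$.

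Next, I would rule out alternative~(2) using the hypothesis. By assumption, every $p$-Sylow subgroup of $G$ is homocyclic. Theorem~\ref{Bias-Ab-gr} asserts that $L/K$ admits an extreme Chebyshev bias relative to some pair of singleton classes $(\{a'\},\{b'\})$ with $\Ord(a') = \Ord(b')$ \emph{if and only if} $G$ has a non-homocyclic $p$-Sylow subgroup for some prime $p$. Taking the contrapositive, the homocyclicity hypothesis forbids any extreme Chebyshev bias between singletons of elements of equal order, and in particular rules out alternative~(2) for our chosen pair $(a,b)$.

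Consequently, alternative~(1) of Theorem~\ref{princ} is forced, which gives the claimed identity $\pi(x;L/K;\{a\}) = \pi(x;L/K;\{b\})$ for all $x \geq 2$. There is no genuine obstacle here: the substance of the argument is entirely absorbed into the two theorems being invoked, and the corollary is a direct logical consequence.
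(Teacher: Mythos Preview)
Your argument is correct and matches the paper's approach: the corollary is stated as an ``immediate consequence'' of Theorem~\ref{Bias-Ab-gr} with no separate proof, and your combination of the dichotomy in Theorem~\ref{princ} with the contrapositive of Theorem~\ref{Bias-Ab-gr} is exactly the intended reasoning.
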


\subsubsection{Extreme Chebyshev bias and low index}

We will obtain the following generalization of the construction of~\cite{FJ}*{\S2} leading to new instances of extreme Chebyshev bias. 

\begin{prop}\label{G+ab}
    Let $k$ be a number field and let $G=\Z/p^n\Z \times \Z/p^m\Z \times H$ where $p$ is a prime, $1\leq n < m$ and $(H,+)$ is a finite abelian group. Let $a=(0,1,0)$ and $b=(p^{m-n},0,0)$. Then, there exists a Galois extension $L/k$ of group $$G^+\simeq (\Z/p^m\Z \times \Z/p^m\Z \times H )\rtimes \Z/2\Z$$ (where the non-trivial element of $\Z/2\Z$ acts on the left factor by switching the coordinates of the first two factors of $\Z/p^m\Z \times \Z/p^m\Z \times H$) such that $L/L^G$ has an extreme Chebyshev bias relatively to $(\{a\},\{b\})$.
\end{prop}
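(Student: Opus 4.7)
The plan has two parts: (i) realize $G^+$ as a Galois group over $k$, and (ii) deduce the extreme bias from a combinatorial analysis of how primes decompose in the tower $L/L^G/k$.

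For (i), $G^+ = N \rtimes \Z/2\Z$ with $N = \Z/p^m\Z \times \Z/p^m\Z \times H$ abelian is solvable, so by Shafarevich's theorem $G^+$ occurs as $\Gal(L/k)$. A concrete construction: fix a quadratic extension $F/k$ and, by class field theory for $F$, build an abelian extension $L/F$ with $\Gal(L/F) \simeq N$ on which the nontrivial element of $\Gal(F/k)$ acts via the prescribed involution $\sigma$ swapping the first two factors of $N$; the resulting $L/k$ is Galois with group $G^+$.

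For (ii), embed $G \hookrightarrow N \subset G^+$ in the natural way, identifying the $p^n$-order factor of $G$ with the unique subgroup of order $p^n$ inside the appropriate $\Z/p^m\Z$-factor of $N$. Under this embedding the images of $a$ and $b$ are exchanged by $\sigma$, so $a \sim_{G^+} b$, although $\{a\} \ne \{b\}$ in the abelian $G$. For each prime $\mathfrak q$ of $k$ unramified in $L$ with Frobenius $\sigma^+ \in G^+$, primes of $L^G$ over $\mathfrak q$ are indexed by double cosets $G\backslash G^+/\langle \sigma^+ \rangle$; for each $GyD$, the Frobenius in $G$ of the corresponding prime is $y(\sigma^+)^{e_y}y^{-1}$ for the minimal $e_y \ge 1$ with this element in $G$, and the residue degree equals $e_y$. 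Grouping by $e_y$ yields the decomposition
\[
\pi(x; L/L^G; \{a\}) - \pi(x; L/L^G; \{b\}) = \sum_{e \ge 1} \pi\bigl( x^{1/e}; L/k; m_e^a - m_e^b \bigr),
\]
where $m_e^a, m_e^b : G^+ \to \Z_{\ge 0}$ are class functions counting double cosets yielding Frobenius $a$, respectively $b$, at residue degree exactly $e$.

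The bias is extracted from the asymmetry between $m_e^a$ and $m_e^b$. At $e = 1$ the two functions coincide (both supported on the common $G^+$-class $C^+ \ni a, b$ with equal total mass), so the leading contribution cancels. The first nontrivial difference appears at $e = p^{m-n}$: a direct case analysis separating $\sigma^+ \in N$ from $\sigma^+ \in N\sigma$, and tracking the $p$-adic valuations of the coordinates of $(\sigma^+)^j$, shows that $(\sigma^+)^{p^{m-n}} = a$ admits solutions with $\sigma^+$ (and all intermediate powers) lying outside the image of $G$, yielding genuine contributions with $e_y = p^{m-n}$, while the mirror equation $(\sigma^+)^{p^{m-n}} = b$ forces $\sigma^+ \in G$ itself (because of the specific location of $b$ in the ``larger'' factor of $N$), collapsing $e_y$ to $1$. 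Hence $m_{p^{m-n}}^a \ne 0$ while $m_{p^{m-n}}^b = 0$. Applying the unconditional form of the Chebotarev density theorem to each term of the decomposition then gives a strictly positive main contribution of order $x^{1/p^{m-n}}/\log x$, dominating all lower-order errors for $x$ large and yielding the extreme Chebyshev bias.

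The main obstacle is the combinatorial enumeration of $m_e^a$ and $m_e^b$ in the third step: one must carefully track, for $0 < j \le p^{m-n}$ and in both sectors $\sigma^+ \in N$ and $\sigma^+ \in N\sigma$, exactly when $y(\sigma^+)^j y^{-1}$ enters the image of $G$, and rule out any cancellation between the two sectors that would shrink $m_{p^{m-n}}^a - m_{p^{m-n}}^b$. The asymmetric role played by the ``smaller'' $\Z/p^n\Z$-factor and the ``larger'' $\Z/p^m\Z$-factor in the embedding $G \hookrightarrow N$, together with the swap-form of $\sigma$, is what ultimately ensures that the $b$-side collapses to $e_y = 1$ while the $a$-side does not.
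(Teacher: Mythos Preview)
Your approach diverges from the paper's, which is much shorter: the paper simply checks that $a$ has no $p$-th root in $G$ while $b$ does, that the involution in $G^+$ exchanges $a$ and $b$ (so $t^+=0$), and then invokes Proposition~\ref{gen}(1) with $d=p$ to get $\pi(x;L/L^G;t)\sim r_p(b)\,x^{1/p}/\log x$. The double-coset decomposition you set up is a valid alternative route, but your combinatorial analysis of it contains a genuine error.

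The claim that the first nontrivial difference occurs at $e=p^{m-n}$ is false; it occurs already at $e=p$. Embed $G$ in $N=(\Z/p^m\Z)^2\times H$ so that the $\Z/p^n\Z$-factor of $G$ sits as the order-$p^n$ subgroup of the first copy of $\Z/p^m\Z$. Any $p$-th root of $a$ in $N$ has first coordinate of order $p^{n+1}$, hence lies \emph{outside} the image of $G$; so for such $\sigma^+$ one has $e_y=p$ (with $y=1$), giving a contribution to $m_p^a$. On the other hand every $p$-th root of $b$ in $N$ has its only nontrivial coordinate in the second $\Z/p^m\Z$, which $G$ contains in full, so it lies in $G$ and contributes at $e_y=1$, not to $m_p^b$. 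Thus $m_p^a\neq m_p^b$. (Incidentally, your claim that $(\sigma^+)^{p^{m-n}}=b$ forces $\sigma^+\in G$ only holds when $m-n\le n$: in general it forces the first coordinate into the subgroup of order $p^{m-n}$, not $p^n$.) Since you have not controlled the sign of the terms for $p\le e<p^{m-n}$, the argument as written does not establish the bias; once you correct the exponent to $p$ and check $m_e^a=m_e^b$ for $1\le e<p$, you are essentially redoing the paper's application of Proposition~\ref{gen} by hand.
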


We highlight the importance of the Artin formalism (notably the compatibility with induction) as a crucial tool for producing extreme Chebyshev biases. Proposition~\ref{gen} makes this feature precise and answers in turn in the affirmative question {\bf a)} stated at the end of \S\ref{subsec:cheb}. Proposition~\ref{G+ab} is mainly a consequence of Proposition~\ref{gen}. 
We will also prove (see Proposition~\ref{min-ab}) that in the notation of Proposition~\ref{G+ab}, the group $G^+$ minimizes the index $(G^+:G)$ of a group for which $t=\mathds{1}_{\{a\}}-\mathds{1}_{\{b\}}$ satisfies the conditions of Proposition~\ref{gen}.

\medskip
The next statement gives an explicit construction of a family of number fields $L/K/k$ such that $G^+:=\Gal(L/k)$ and $G:=\Gal(L/K)$ are of the same type as in Proposition~\ref{G+ab}. We obtain the following result the proof of which is postponed to the appendix. 
\begin{thm}\label{Galois}
    Let $m\geq 2$ and let $k$ be the maximal totally real subfield of $\Q \left (\zeta_{2^m} \right )$, where $\zeta_{2^m}$ is a primitive $2^m$-th root of $1$ in $\C$. 
    Let $q\geq 5$ be a prime congruent to $1$ modulo $4$ and let $a,b$ be positive integers such that $q=a^2+b^2$. Let $D_1$ and $D_2$ be the splitting fields inside $\C$ of $X^{2^m}-(a+ib)$ and $X^{2^m}-(a-ib)$ over $\Q(\zeta_{2^m})$, respectively. Then the compositum $L=D_1D_2$ contains $k$, it is Galois over $\Q$ and we denote $G^+:=\Gal(L/k)$ and $G:=H\Gal(D_1D_2/D_1)$ where $H$ is a non-trivial proper subgroup of $\Gal(D_1D_2/D_2)$. In this situation there exist two elements $\sigma_1$ and $\sigma_2$ in $G$ giving rise to an extreme Chebyshev bias in $L/L^G$. 
    Moreover, there exists an integer $n\in\{1,\ldots,m-1\}$ such that 
    \[
        G \simeq \Z/2^n\Z \times \Z/2^m\Z \text{ and } G^+ \simeq \left(\Z/2^m\Z \times \Z/2^m\Z \right) \rtimes \Z/2\Z\,.
    \]
\end{thm}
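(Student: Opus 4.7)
The plan is to verify that the extension tower $L/L^G/k$ realizes the group-theoretic setup of Proposition~\ref{G+ab}; the extreme Chebyshev bias is then read off from that proposition. The main work therefore consists of identifying $G^+=\Gal(L/k)$ and $G=\Gal(L/L^G)$ with the structures predicted by the statement.

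First I would compute $\Gal(L/\Q(\zeta_{2^m}))$ via Kummer theory. The Gaussian integer $a+ib$ is a prime of $\Z[i]$ of odd norm $q$, and since $\Q(\zeta_{2^m})/\Q(i)$ is unramified outside $2$, every prime $\mathfrak p$ of $\Q(\zeta_{2^m})$ above $(a+ib)$ satisfies $v_\mathfrak p(a+ib)=1$. In particular $a+ib$ is not a square in $\Q(\zeta_{2^m})$, so Kummer theory gives $\Gal(D_1/\Q(\zeta_{2^m}))\simeq\Z/2^m\Z$, and similarly for $D_2$. The critical refinement is the equality $D_1\cap D_2=\Q(\zeta_{2^m})$: otherwise, the cyclicity of each $D_i/\Q(\zeta_{2^m})$ would force the unique quadratic subextensions to coincide, i.e., $\Q(\zeta_{2^m})(\sqrt{a+ib})=\Q(\zeta_{2^m})(\sqrt{a-ib})$, which is equivalent to $q=(a+ib)(a-ib)$ being a square in $\Q(\zeta_{2^m})$. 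But the only quadratic subfields of $\Q(\zeta_{2^m})$ are $\Q(i)$ (when $m=2$) and $\Q(i),\Q(\sqrt 2),\Q(\sqrt{-2})$ (when $m\geq 3$), so $\sqrt q\in\Q(\zeta_{2^m})$ would force $q\in\{-1,2,-2\}$, contradicting $q\geq 5$. Hence $\Gal(L/\Q(\zeta_{2^m}))\simeq\Z/2^m\Z\times\Z/2^m\Z$.

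Next, to pass to $k$, I would invoke complex conjugation $\tau$: it fixes $k$, sends $\zeta_{2^m}$ to $\zeta_{2^m}^{-1}$, and swaps $a+ib$ with $a-ib$. Choosing $2^m$-th roots coherently so that $(a-ib)^{1/2^m}=\overline{(a+ib)^{1/2^m}}$, one has $\tau(D_1)=D_2$, so $L/k$ is Galois of degree $2\cdot 2^{2m}$. Parametrizing $\sigma\in\Gal(L/\Q(\zeta_{2^m}))$ by the pair $(\sigma_1,\sigma_2)\in(\Z/2^m\Z)^2$ recording its action on the two Kummer generators, a direct computation gives $\tau\sigma\tau^{-1}=(-\sigma_2,-\sigma_1)$; the group automorphism $(\sigma_1,\sigma_2)\mapsto(\sigma_1,-\sigma_2)$ then transports this to the standard coordinate swap. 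Hence $G^+\simeq(\Z/2^m\Z\times\Z/2^m\Z)\rtimes\Z/2\Z$ with the swap action. Inside the base, $\Gal(L/D_1)=\{0\}\times\Z/2^m\Z$ and $\Gal(L/D_2)=\Z/2^m\Z\times\{0\}$; taking $H$ to be the unique subgroup of $\Gal(L/D_2)$ of order $2^n$ for any $n\in\{1,\ldots,m-1\}$ yields $G=H\cdot\Gal(L/D_1)\simeq\Z/2^n\Z\times\Z/2^m\Z$, sitting inside $G^+$ in the same way as the subgroup $G$ of Proposition~\ref{G+ab} (with its abelian factor taken trivial).

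Choosing $\sigma_1,\sigma_2\in G$ to be the analogues of the elements $a$ and $b$ of Proposition~\ref{G+ab}, the extreme Chebyshev bias for $L/L^G$ relative to $(C_{\sigma_1},C_{\sigma_2})$ then follows from Proposition~\ref{gen} (the general criterion underlying Proposition~\ref{G+ab}). The heart of the argument is the Galois-theoretic computation: proving $D_1\cap D_2=\Q(\zeta_{2^m})$ (which controls the size of $\Gal(L/\Q(\zeta_{2^m}))$), correctly identifying the semidirect-product structure from the action of $\tau$, and performing the coordinate change that converts the anti-diagonal action into the standard swap. Beyond these, invoking the bias criterion is essentially mechanical.
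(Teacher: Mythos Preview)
Your proposal is correct and follows essentially the same route as the paper: establish $\Gal(L/\Q(\zeta_{2^m}))\simeq(\Z/2^m\Z)^2$ via Kummer theory (the key point being $D_1\cap D_2=\Q(\zeta_{2^m})$, reduced to $\sqrt q\notin\Q(\zeta_{2^m})$), bring in complex conjugation to obtain the semidirect product $G^+$, identify $G$, and then read off the extreme bias from the general criterion (Proposition~\ref{gen} / Proposition~\ref{P+simp}). Two cosmetic differences: the paper shows $a\pm ib$ are non-squares by the Galois-conjugate trick (both or neither are squares, and both would make $q$ a square) rather than your valuation argument, and it chooses the second generator as $\gamma_2(\overline{\theta_m})=\overline{\zeta\theta_m}$ so that $\pi\gamma_1\pi=\gamma_2$ directly, avoiding your coordinate change $(\sigma_1,\sigma_2)\mapsto(\sigma_1,-\sigma_2)$.
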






Finally we consider Galois extensions of number fields $L/k$ of prime power degree. In this situation we give a simple sufficient group theoretic condition on $\Gal(L/k)$ implying an extreme Chebyshev bias for some subextension $L/K$. 

\begin{thm}\label{p_groupsBias}
    Let $p$ be a prime number and let $G^+$ be a $p$-group. Let $L/k$ be a Galois extension of number fields with group $G^+$. Assume that there exists a $p$-th power $\sigma \in G^+$ such that the generated subgroup $\langle \sigma \rangle$ is not normal in $G^+$. Then, there exists an intermediate extension $K$ of $L/k$, two conjugacy classes $C_1,C_2$ of $\Gal(L/K)$, and a real number $A>0$, such that for all $x>A$ we have: $$\frac{1}{|C_1|}\pi(x;L/K;C_1)>\frac{1}{|C_2|}\pi(x;L/K;C_2)\,.$$
\end{thm}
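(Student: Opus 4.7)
The strategy is to locate an intermediate field $K=L^H$ for a suitable subgroup $H\le G^+$ and then to invoke Proposition~\ref{gen}, which transforms a class-function identity coming from the Artin induction formalism into an extreme Chebyshev bias. The target setup is two $H$-conjugacy classes $C_1,C_2\subseteq H$ that are fused in $G^+$ (i.e.\ both contained in a common $G^+$-conjugacy class $C^+$) but are distinguishable by the natural class function counting $p$-th roots in $H$.

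First I would extract the group-theoretic data from the hypothesis. Write $\sigma=\tau^p$ for some $\tau\in G^+$, and pick $g\in G^+$ with $\sigma':=g\sigma g^{-1}\notin\langle\sigma\rangle$; such $g$ exists precisely because $\langle\sigma\rangle$ is not normal. Then $\sigma$ and $\sigma'$ are $G^+$-conjugate, and each is a $p$-th power in $G^+$, via $\tau$ and $g\tau g^{-1}$ respectively. Next I would construct $H$; a natural first candidate is $H=\langle\tau,\sigma'\rangle$, possibly shrunk or adjusted. By design $\tau\in H$ is a $p$-th root of $\sigma$ inside $H$, while $g\tau g^{-1}$ -- the natural candidate $p$-th root of $\sigma'$ -- generically fails to lie in $H$. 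Using the $p$-group structure of $G^+$ (subnormality of every subgroup, abundance of normal subgroups of index $p$, proper containment of any proper subgroup in its normalizer), I would arrange that (i) $\sigma$ and $\sigma'$ lie in distinct $H$-conjugacy classes $C_1,C_2$, and (ii) the counts $\#\{h\in H:h^p=\sigma\}$ and $\#\{h\in H:h^p=\sigma'\}$ differ.

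Given such an $H$, I set $K=L^H$ and apply Proposition~\ref{gen} to the class function $t_{C_1,C_2}$. Property~(i) guarantees that $t_{C_1,C_2}$ is non-zero; property~(ii) ensures that its induction to $G^+$ has non-trivial content on $C^+$, which Proposition~\ref{gen} translates into a main term of definite sign in the explicit formula for $\pi(x;L/K;t_{C_1,C_2})$. This main term dominates the oscillatory contributions from zeros of Artin $L$-functions and produces the strict inequality for all $x$ larger than some $A>0$. The main obstacle is the combinatorial construction of $H$ in the second step: the simultaneous non-fusion and root-count discrepancy can genuinely fail in general finite groups (as in the $Q_8$ case of Corollary~\ref{Example1}(1), where every subgroup is normal and the hypothesis of the theorem is vacuous); the delicate role of the $p$-group assumption is precisely to provide the necessary flexibility in the subgroup lattice to secure conditions (i) and (ii) simultaneously.
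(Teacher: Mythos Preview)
Your overall strategy---find a subgroup $H\le G^+$ containing two $G^+$-conjugate elements with different numbers of $p$-th roots inside $H$, then apply Proposition~\ref{gen}---is correct and is exactly the route the paper takes (via Theorem~\ref{CarP+} and Proposition~\ref{P+simp}). You also correctly note that the vanishing of $(t_{C_1,C_2}\circ f_\ell)^+$ for $1<\ell<p$ comes for free from the $p$-group structure (this is the content of Proposition~\ref{P+simp}).

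However, there is a genuine gap at precisely the point you yourself flag as ``the main obstacle'': you do not supply the group-theoretic mechanism that guarantees your conditions (i) and (ii). Your candidate $H=\langle\tau,\sigma'\rangle$ with an \emph{arbitrary} conjugate $\sigma'\notin\langle\sigma\rangle$ need not work, and the general $p$-group facts you list (subnormality, normalizer growth) do not by themselves resolve the issue. The paper's key idea is Lemma~\ref{ConjNorm}: using the descending central series of the nilpotent group $G^+$, one shows that $\sigma$ has a conjugate $b$ lying in $N_{G^+}(\langle\sigma\rangle)\setminus\langle\sigma\rangle$. This normalizer condition is not cosmetic; it is what drives the rest of the argument. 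With this choice, in the candidate $H=\langle\tau,b\rangle$ the subgroup $\langle\sigma\rangle$ is normal (both generators normalize it), and one then argues by contradiction: if $b$ also had a $p$-th root $y$ in $H$, one passes to $\langle\sigma,y\rangle$, whose quotient by $\langle\sigma\rangle$ is cyclic, and a short computation with orders in this cyclic quotient forces $\sigma\in\langle\sigma^p\rangle$, which is absurd. Without the normalizer selection of $b$, neither the normality of $\langle\sigma\rangle$ in the candidate subgroups nor the cyclic-quotient step is available, and the argument does not close.
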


Taking $p=2$ in Theorem~\ref{p_groupsBias}, we obtain the following family of examples.

\begin{cor}\label{p_groupsbiascor}
    Let $G^+=U(n,\Z/2^m\Z)$ be the unitriangular matrix subgroup of $\GL_n(\Z/2^m\Z)$, where $m\geq 2$ and $n\geq 3$, that is the $2$-subgroup of all upper triangular matrices with diagonal coefficients all equal to $1$. Let $L/k$ be a Galois extension of number fields of group $G^+$. Then, there exists a subextension $K$ of $L/k$ such that $L/K$ has an extreme Chebyshev bias.
\end{cor}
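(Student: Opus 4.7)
The approach is to reduce to Theorem~\ref{p_groupsBias} with $p=2$. Since $G^+=U(n,\Z/2^m\Z)$ is a $2$-group (of order $2^{mn(n-1)/2}$), it suffices to exhibit a square element $\sigma\in G^+$ whose cyclic subgroup $\langle\sigma\rangle$ is not normal in $G^+$.

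I would take $\sigma := I + 2E_{1,2}$, where $E_{i,j}$ denotes the standard matrix unit. The identity $E_{1,2}^2 = 0$ gives $\sigma = (I + E_{1,2})^2$, exhibiting $\sigma$ as a square in $G^+$; the same identity yields $\sigma^k = I + 2kE_{1,2}$ by induction, so every element of $\langle\sigma\rangle$ has zero entry at position $(1,3)$. To detect non-normality I would conjugate by $\tau := I + E_{2,3}$, which is an element of $G^+$ precisely because $n\geq 3$, and whose inverse is $I-E_{2,3}$. Using $E_{1,2}E_{2,3}=E_{1,3}$ and $E_{2,3}E_{1,2}=0$, a short computation yields
\[
\tau\sigma\tau^{-1} \;=\; I + 2E_{1,2} - 2E_{1,3}.
\]
Because $m\geq 2$, the class $-2$ is nonzero in $\Z/2^m\Z$, so this conjugate has nonzero $(1,3)$-entry and therefore does not belong to $\langle\sigma\rangle$. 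Hence $\langle\sigma\rangle$ is not normal in $G^+$, and Theorem~\ref{p_groupsBias} immediately supplies an intermediate field $K$ of $L/k$ together with the two conjugacy classes realizing the extreme Chebyshev bias in $L/K$.

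The only real obstacle is guessing the right pair $(\sigma,\tau)$. Both hypotheses $m\geq 2$ and $n\geq 3$ are used essentially: the first to guarantee the existence of a non-trivial square of the form $I + 2E_{i,j}$ in $G^+$, and the second to provide a conjugating element pushing $\sigma$ outside its own cyclic span via the non-vanishing commutator relation $E_{1,2}E_{2,3}=E_{1,3}\neq 0$. Once these choices are made, the verification reduces to a one-line matrix computation and the corollary follows.
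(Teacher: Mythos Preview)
Your proof is correct and follows essentially the same route as the paper. The paper deduces the corollary from Proposition~\ref{P+simp} together with Corollary~\ref{Matrices}, which amounts (via Theorem~\ref{CarP+}) to exhibiting an element of $U(n,\Z/2^m\Z)$ whose square generates a non-normal cyclic subgroup; the paper uses $I+2^{m-2}E_{1,2}$ (an element of order~$4$) and checks that its square fails to commute with $I+E_{2,3}$, whereas you use $I+E_{1,2}$ and check the same non-commutation for $(I+E_{1,2})^2=I+2E_{1,2}$ --- a cosmetic difference only.
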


In the setting of Theorem~\ref{p_groupsBias}, we address the \emph{Linnik type question} that naturally comes to mind: can one give an explicit value of the real number $A$ in terms of arithmetic or Galois theoretic invariants of $L/K$? Using~\cite{FJ}*{Theorem 2.3}, we give an answer that is conditional on the Riemann Hypothesis. 


\begin{thm}\label{Linn1}
    With notation and assumptions as in Theorem~\ref{p_groupsBias}, set $k=\Q$ and assume the
    Riemann Hypothesis for the Dedekind Zeta function of $L$. Fix $C_1$ and $C_2$ as in the conclusion of Theorem~\ref{p_groupsBias}. For any fixed $b\in C_2$ let \( r \) be the number of conjugacy classes contained in \( \{x \in G \colon \exists g\in G,\, gx^pg^{-1}=b\} \). Denote $\mathrm{rd}_L:=d_L^{\frac{1}{[L:\Q]}}$, where $d_L$ is the Discriminant of $L$. For \( B > 0 \), a sufficiently large absolute constant, we set 
    \[ A_1 = B(\sqrt{r}\log(\mathrm{rd}_L+2)[L:\mathbb{Q}])^{2p}\,. \]
    Then for all \( x > A_1 \) we have 
    \[ \frac{1}{|C_1|}\pi(x;L/K;C_1)> \frac{1}{|C_2|}\pi(x;L/K;C_2) \, . \]
\end{thm}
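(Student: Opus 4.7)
The plan is to take the qualitative threshold $A$ produced by Theorem~\ref{p_groupsBias} and render it effective via the RH-conditional explicit estimate of \cite{FJ}*{Theorem 2.3}. I set $t=\frac{1}{|C_1|}\mathds 1_{C_1}-\frac{1}{|C_2|}\mathds 1_{C_2}$ on $G=\Gal(L/K)$, a class function of mean zero. Applying \cite{FJ}*{Theorem 2.3} to $t$ decomposes
\[
\frac{1}{|C_1|}\pi(x;L/K;C_1)-\frac{1}{|C_2|}\pi(x;L/K;C_2)=M_t(x)+R_t(x)\,,
\]
where $M_t(x)$ collects the contributions from prime-power Frobenius elements with $k\geq 2$ (the $k=1$ contribution vanishes by Chebotarev and the mean-zeroness of $t$), and $R_t(x)$ is the explicit remainder.

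Next, I would isolate the leading contribution to $M_t(x)$. Since $G^+$ is a $p$-group, the smallest $k\geq 2$ producing a non-vanishing contribution is $k=p$; by the argument underlying Theorem~\ref{p_groupsBias} this contribution is strictly positive and of the shape
\[
\frac{1}{p\,|G|}\Bigl(\frac{|\{g\in G:g^p\in C_1\}|}{|C_1|}-\frac{|\{g\in G:g^p\in C_2\}|}{|C_2|}\Bigr)\,\mathrm{li}(x^{1/p})\,.
\]
The cardinality $|\{g\in G:g^p\in C_2\}|$ equals $|C_2|\cdot|\{g\in G:g^p=b\}|$ and decomposes into the $r$ conjugacy classes of $p$-th roots of $b$, so the parameter $r$ enters the quantitative lower bound for $M_t(x)$ through this decomposition.

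For the remainder, the RH-conditional estimate of \cite{FJ}*{Theorem 2.3} furnishes an explicit bound in $\mathrm{rd}_L$, $[L:\Q]$, and the $L^2$-norm of the relevant class function. While $\|t\|_2$ is an absolute constant, the $L^2$-norm of the natural pullback $t\circ(\cdot)^p$ encodes the $r$ conjugacy classes of $p$-th roots supporting it and is of order $\sqrt{r}$. One thus arrives at a bound of the shape $|R_t(x)|\ll\sqrt{r}\,\sqrt{x}\,\log(\mathrm{rd}_L+2)\,[L:\Q]$ up to lower-order factors in $\log x$.

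Finally, solving $M_t(x)>|R_t(x)|$ yields the explicit threshold $A_1$. Balancing the main-term estimate of order $x^{1/p}$ against the error of order $\sqrt{r}\,\sqrt{x}\,\log(\mathrm{rd}_L+2)\,[L:\Q]$ and raising the resulting inequality to the appropriate power produces the claimed bound $A_1=B(\sqrt{r}\log(\mathrm{rd}_L+2)[L:\Q])^{2p}$ for $B$ a sufficiently large absolute constant. The main technical obstacle is the bookkeeping of the prime-power contributions to $M_t(x)$: one must verify that the higher-order terms corresponding to $k=p^2,p^3,\ldots$ do not cancel the leading $k=p$ term, and track how $r$ propagates through both the $k=p$ main-term lower bound and the $L^2$-norms appearing in the RH-conditional error bound, so that the exponent $2p$ in the final threshold emerges cleanly.
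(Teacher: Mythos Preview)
Your balancing step contains a fatal arithmetic error. You propose to compare a main term $M_t(x)$ of order $x^{1/p}$ against an RH-conditional error $R_t(x)$ of order $\sqrt{x}$ (times the arithmetic factors). But for every prime $p\geq 2$ one has $x^{1/p}\leq \sqrt{x}$, and for $p\geq 3$ strictly $x^{1/p}=o(\sqrt{x})$; even for $p=2$ the logarithmic weights make the error dominate $\mathrm{li}(x^{1/2})$. Hence the inequality $M_t(x)>|R_t(x)|$ is \emph{never} satisfied for large $x$, and no threshold of the form $(\sqrt{r}\log(\mathrm{rd}_L+2)[L:\Q])^{2p}$ can emerge from this comparison. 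The exponent $2p$ you quote does not arise from balancing $x^{1/p}$ against $x^{1/2}$; it arises from balancing $y$ against $y^{1/2}$ at scale $y=x^{1/p}$, which is a different mechanism entirely. Relatedly, \cite{FJ}*{Theorem 2.3} as used in the paper requires the class function to have strictly positive mean $\widehat{g}(1)>0$; your $t$ has mean zero, so you cannot invoke it on $t$ directly.

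The paper avoids this obstruction by a purely algebraic identity that you are missing. Using that $G^+$ is a $p$-group, one checks $(t\circ f_\ell)^+=0$ for $\gcd(\ell,p)=1$ and $t\circ f_\ell=-s_\ell$ for $p\mid\ell$, where $s=\frac{|G|}{|C_b|}\mathds 1_{C_b}$. A telescoping computation with the M\"obius inversion then yields the \emph{exact} identity
\[
\pi(x;L/K;t)=\sum_{1\leq k\leq u}\pi\bigl(x^{1/p^k};L/K;s_{p^k}\bigr)\,,
\]
a finite sum of \emph{nonnegative} terms (each $s_{p^k}$ is a nonnegative class function). This already gives $\pi(x;L/K;t)\geq 0$ for all $x$, disposing of your ``bookkeeping obstacle'' about higher $p$-powers for free. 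Positivity then follows as soon as the single term $\pi(x^{1/p};L/K;s_p)$ is positive. Since $\widehat{s_p}(1)=r_p(C_b)>0$, one may now legitimately apply \cite{FJ}*{Theorem 2.3} to $s_p$ at scale $y=x^{1/p}$; the Littlewood norm satisfies $\lambda(s_p)/\widehat{s_p}(1)\leq \sqrt{r}\,|G|$ by Cauchy--Schwarz over the $r$ conjugacy classes of $p$-th roots of $b$, and the threshold $y>(\sqrt{r}\log(\mathrm{rd}_L+2)[L:\Q])^2$ becomes $x>A_1$ after raising to the $p$-th power.
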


\section{Cayley embedding: equal prime counting functions \emph{vs} extreme bias}\label{section:dicho}
The main goal of the section is to prove Theorem~\ref{princ}.

\subsection{Induction and consequences on prime ideal counting functions}
We will use the following notation. For $G$ a finite group and for $\ell \geq 1$, define $f_\ell\colon G\rightarrow G$ and $r_\ell\colon G\rightarrow \Z$ by 
\begin{equation}\label{eq:rl}
f_\ell(x)=x^\ell\,,\qquad r_\ell(x)=|\{g\in G\colon f_\ell(g)=g^\ell=x\}|\qquad (x\in G)\,.
\end{equation}
Recall also the definition of the usual scalar product on class functions on $G$:
\[
\langle f,g\rangle_G =\frac{1}{|G|}\underset{x\in G}{\sum}f(x) \overline{g(x)}\,.
\]
Unless this leads to confusion, we will simply write $\langle\cdot,\cdot\rangle$ instead of $\langle\cdot,\cdot\rangle_G$.

We first state an immediate consequence of Frobenius reciprocity. When $G$ is a subgroup of a group $G^+$ and $t$ is a class function of $G$ we denote $t^+:=\text{Ind}_G^{G^+}t$ the induced class function by $t$ on $G^+$. Recall that for all $a\in G^+$ we have: $$  t^+(a):=\frac{1}{|G|}\sum_{\substack{g\in G^+\\ g^{-1}ag\in G}}t(g^{-1}ag)$$
\begin{prop}\label{Ind-Rac}
Let $G$ be a subgroup of a finite group $G^+$ and let $t\colon G\to \C$ be a class function. Let $\ell \geq 1$ and assume that $(t\circ f_\ell)^+=0$. Then $\langle t,r_\ell \rangle =0$.
\end{prop}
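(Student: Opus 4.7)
The plan is to apply Frobenius reciprocity to the hypothesis $(t\circ f_\ell)^+=0$ and then identify the resulting quantity on $G$ with $\langle t,r_\ell\rangle$ via a change of variables $y=x^\ell$.

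First I would note that $t\circ f_\ell$ is indeed a class function on $G$: since $t$ is central and $(gxg^{-1})^\ell=gx^\ell g^{-1}$, one has $t(f_\ell(gxg^{-1}))=t(f_\ell(x))$. Hence $(t\circ f_\ell)^+=\mathrm{Ind}_G^{G^+}(t\circ f_\ell)$ is well-defined, and the hypothesis is that it vanishes identically on $G^+$. Taking the inner product with the trivial character of $G^+$ therefore gives
\[
0=\langle (t\circ f_\ell)^+,\mathds 1_{G^+}\rangle_{G^+}.
\]
Frobenius reciprocity, together with the fact that $\mathds 1_{G^+}$ restricts to $\mathds 1_G$, transforms this into
\[
0=\langle t\circ f_\ell,\mathds 1_G\rangle_{G}=\frac{1}{|G|}\sum_{x\in G}t(x^\ell).
\]

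Next I would reindex the last sum by grouping the terms according to the value $y=x^\ell$. The fibre $f_\ell^{-1}(y)=\{x\in G\colon x^\ell=y\}$ has cardinality $r_\ell(y)$ by the definition of $r_\ell$ in \eqref{eq:rl}, so
\[
\sum_{x\in G}t(x^\ell)=\sum_{y\in G}r_\ell(y)\,t(y).
\]
Since $r_\ell$ is $\Z$-valued, $\overline{r_\ell}=r_\ell$, and dividing by $|G|$ gives exactly $\langle t,r_\ell\rangle_G$, which must therefore vanish.

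There is no real obstacle here: once one recognises that the induction-restriction adjunction is the correct tool, the proof reduces to a routine application of Frobenius reciprocity followed by a counting argument for the fibres of $f_\ell$. The only mildly delicate point is the preliminary verification that $t\circ f_\ell$ is a class function so that its induction is defined, but this is immediate from centrality of $t$. I would present the argument in the above two-line form, emphasising that the statement is essentially a character-theoretic reformulation of the identity $\sum_{x\in G}t(x^\ell)=\sum_{y\in G}r_\ell(y)t(y)$.
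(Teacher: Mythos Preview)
Your proof is correct and follows essentially the same approach as the paper: both use Frobenius reciprocity to reduce $\langle (t\circ f_\ell)^+,\mathds 1\rangle_{G^+}$ to $\langle t\circ f_\ell,\mathds 1\rangle_G$, and both identify the latter with $\langle t,r_\ell\rangle_G$ by grouping the sum $\sum_{x\in G}t(x^\ell)$ according to the fibres of $f_\ell$. The only difference is the order in which these two steps are presented, and your added remark that $t\circ f_\ell$ is a class function is a helpful clarification the paper leaves implicit.
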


\smallskip
\begin{proof}
We start by noticing that $\langle t , r_\ell \rangle = \langle t\circ f_\ell , 1\rangle $.
Indeed:
\[ \langle t,r_\ell\rangle=\frac{1}{|G|}\underset{g\in G}{\sum}t(g)r_\ell(g)=\frac{1}{|G|}\underset{g \in G^\ell}{\sum}\ \underset{\substack{a \in G,\\ a^\ell=g}}{\sum}t(g)=\frac{1}{|G|}\underset{a\in G}{\sum}t(a^\ell)=\langle t\circ f_\ell,1\rangle\,, \]
where $G^\ell$ denotes the set of $\ell$-th powers of $G$. We apply Frobenius reciprocity: $$\langle t\circ f_\ell,1\rangle_G=\langle (t\circ f_\ell)^+,1 \rangle_{G^+}\,,$$
which concludes the proof by our assumption.

\end{proof}

\begin{rk} \label{rem:converse}
\begin{enumerate}
    \item The converse of the previous result is false: we can even answer negatively to a weak converse:
    \begin{center}
    \emph{
    If $t^+=0$ and if for all $k|\ell$ we have $\langle t,r_k \rangle=0$, can we deduce $(t\circ f_\ell)^+=0$ ?} 
    \end{center}
    Consider indeed $Q_8$ the quaternion group of order $8$. Fix two distinct elements $i,j \in Q_8$ of order $4$ generating $Q_8$. The diagonal injection $\langle i \rangle \times \langle j \rangle \hookrightarrow Q_8 \times Q_8$ gives rise to an injection $G:=\langle i\rangle \times \langle j \rangle \hookrightarrow (Q_8)^2 \rtimes \Z/2\Z=G^+$, where the semi-direct product structure corresponds to the permutation of coordinates in $(Q_8)^2$. Consider $t=\mathds 1_{(-1,1)}-\mathds 1_{(1,-1)}$, we have $r_2(-1,1)=r_2(1,-1)$ and $t^+=0$ but $(t\circ f_2)^+\ne 0$.
    \item We will see in Lemma \ref{récip} that when $G^+$ is well chosen (in particular if $G\hookrightarrow\mathfrak S(G)\simeq G^+$ is the Cayley embedding, as in Theorem~\ref{princ}), the weak converse above becomes true. This is a crucial point for the proof of Theorem \ref{princ}.
\end{enumerate}
\end{rk}

Building on Proposition~\ref{Ind-Rac}, Proposition~\ref{gen} below draws consequences regarding prime ideal counting in number fields depending on whether
\begin{enumerate}
    \item[(a)] one has $(t\circ f_\ell)^+=0$ for any $\ell$ square-free and less than a square-free integer $d$ for which $\langle t,r_d\rangle\neq 0$,
    \item[(b)] one has $(t\circ f_\ell)^+=0$ for all square-free $\ell$.
\end{enumerate}


\begin{prop}\label{gen} Let $L/K/k$ be a tower of number field extensions and assume that $L/k$ is Galois. Let $G=\Gal(L/K)$ and $G^+=\Gal(L/k)$. Let $t\colon G\to \C$ be a class function. We have the following.
 \begin{enumerate}
     \item Assume there exists $d \geq 2$ square-free such that $\langle t, r_d \rangle \ne 0$ and that for $1\leq \ell < d$ square-free, one has $(t \circ f_\ell)^+=0$. Then we have: $$\pi(x;L/K;t)=\mu(d)\langle t, r_d \rangle \frac{x^{\frac{1}{d}}}{\log x }+o\left(\frac{x^\frac{1}{d}}{\log x}\right)\,$$
    where $\mu$ is the Möbius function.
    \item Assume that for all square-free $\ell\geq1$ we have $(t\circ f_\ell)^+=0$. Then, for every $x\geq 2$, we have:
    $$\pi(x;L/K;t)=0\,.$$
 \end{enumerate}
    
\end{prop}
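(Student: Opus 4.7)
The plan is to reduce the proposition to the equality of Artin $L$-functions $L(s, t, L/K) = L(s, t^+, L/k)$ via a classical M\"obius inversion that trades the prime counting function for a natural $\psi$-type companion (weighted by $\log N\mathfrak{p}$ and summed also over prime powers).

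First I would introduce
$$\theta(x; L/K; t) := \sum_{N\mathfrak{p} \leq x} t(\varphi_\mathfrak{p}) \log N\mathfrak{p}, \qquad \psi(x; L/K; t) := \sum_{m \geq 1} \theta(x^{1/m}; L/K; t \circ f_m),$$
the latter being the Dirichlet coefficient sum attached to $-L'/L(s, t, L/K)$. Since the composition operators $T_m : t \mapsto t \circ f_m$ satisfy $T_m T_n = T_{mn}$, M\"obius inversion yields
$$\theta(x; L/K; t) = \sum_{m \geq 1} \mu(m)\, \psi(x^{1/m}; L/K; t \circ f_m),$$
a sum effectively over squarefree $m$ and finite for each $x$.

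Next I would invoke the Artin formalism: from $L(s, t, L/K) = L(s, t^+, L/k)$ (with inertia-averaged conventions at ramified primes, as in the introduction) one deduces by equating Dirichlet coefficients of $-L'/L$ the \emph{exact} identity
$$\psi(x; L/K; t \circ f_m) = \psi(x; L/k; (t \circ f_m)^+)$$
for all $x \geq 2$. Combined with the unconditional estimate $\psi(x; L/k; \chi) = \langle \chi, 1 \rangle_{G^+} x + o(x)$ (Chebotarev-type, relying only on non-vanishing of Artin $L$-functions on $\Re(s) = 1$) and the Frobenius reciprocity identity $\langle (t \circ f_m)^+, 1 \rangle_{G^+} = \langle t, r_m \rangle_G$ (already appearing in the proof of Proposition~\ref{Ind-Rac}), this gives
$$\psi(x^{1/m}; L/K; t \circ f_m) = \langle t, r_m \rangle\, x^{1/m} + o(x^{1/m}).$$

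For part (1), the hypothesis kills all terms with $1 \leq \ell < d$ squarefree in the M\"obius sum, so the first non-vanishing contribution is $\mu(d) \langle t, r_d \rangle x^{1/d}$ (plus a $o(x^{1/d})$ error), and the finitely many terms with $m > d$ are each $O(x^{1/m}) = o(x^{1/d})$; Abel summation then turns this into the asymptotic for $\pi(x; L/K; t)$ stated in the proposition (the integral correction being $O(x^{1/d}/\log^2 x)$ since $d \geq 2$). For part (2), the hypothesis makes every term in the M\"obius sum vanish \emph{exactly}, hence $\theta(x; L/K; t) \equiv 0$ on $[2, \infty)$; reading off the jumps of this step function shows $\sum_{N\mathfrak{p} = n} t(\varphi_\mathfrak{p}) = 0$ for every $n$, so $\pi(x; L/K; t) = 0$ for every $x$. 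The main obstacle I anticipate is the careful verification of the \emph{exact} identity $\psi(x; L/K; t) = \psi(x; L/k; t^+)$ under the paper's inertia-averaged convention at ramified primes, since this is what upgrades the asymptotic in part (1) to the identical vanishing in part (2).
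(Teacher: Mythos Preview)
Your proposal is correct and follows essentially the same route as the paper: the M\"obius inversion relating $\theta$ and $\psi$, the exact identity $\psi(x;L/K;t\circ f_\ell)=\psi(x;L/k;(t\circ f_\ell)^+)$ from Artin induction, Chebotarev for the main term at $\ell=d$, a crude bound on the tail $\ell>d$, and partial summation to pass from $\theta$ to $\pi$. The only minor imprecision is calling the tail ``finitely many terms'' (their number is $O(\log x)$, not bounded), but the paper handles this exactly as you intend, bounding the sum by $x^{1/(d+1)}+\log(x)\,x^{1/(d+2)}\ll x^{1/(d+1)}$.
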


Part (1) of the statement is a generalization of \cite{FJ2}*{Lemma 3.1}. Note also that Proposition~\ref{gen} already exhibits the type of dichotomy stated in Theorem~\ref{princ}.

For the proof we introduce the analogue of Chebyshev's $\theta$ and $\psi$ functions in the context of Galois extensions of number fields:
\begin{equation}\label{eq:phipsi}
\theta(x;L/K;t)=\sum_{\substack{\frak{p}\lhd O_K,\\ N\frak{p}\leq x}}t(\varphi_\frak{p})\log(N\frak{p})\,, \qquad
\psi(x;L/K;t):=\sum_{\substack{\frak{p}\lhd O_K,\, k\geq 1 \\ N\frak{p}^k\leq x}}t(\varphi_\frak{p}^k)\log(N\frak{p})\,.
\end{equation}

\begin{proof}[Proof of Proposition~\ref{gen}]
We follow closely the proof of~\cite{FJ2}*{Lemma 3.1}.

By the inclusion-exclusion principle, we have for $x\geq2$:
\begin{equation}\label{inc-excl}
    \theta(x;L/K;t)=\underset{\ell\geq 1}{\sum}\mu(\ell)\psi(x^{\frac{1}{\ell}};L/K; t \circ f_\ell)\,.
\end{equation}
By invariance of Artin $L$-functions under induction we have for $x\geq2$:
\begin{equation}\label{Induction}
    \forall \ell\geq1\,,\qquad  \psi(x^{\frac{1}{\ell}};L/K;t \circ f_\ell)=\psi(x^{\frac{1}{\ell}},L/k, (t \circ f_\ell)^+ )\,.
\end{equation}
\begin{enumerate}
    \item  
By our assumption and by~\eqref{Induction}
for all $1\leq \ell < d$ square-free: 
$$\psi(x^{\frac{1}{\ell}};L/K;t \circ f_\ell)=0\,.$$ 
By Chebotarev's Theorem we have: 
$$\psi(x^{\frac{1}{d}};L/K; t \circ f_d)-\langle t,r_d \rangle x^{\frac{1}{d}}  =o( x^{\frac{1}{d}})\,.$$
Moreover one has the upper bound:
$$\left|\sum_{k=d+1}^{\log(x)} \mu(\ell) \psi(x^{\frac{1}{\ell}};L/K;t\circ f_\ell)\right| \ll x^{\frac{1}{d+1}}+\log(x)x^{\frac{1}{d+2}}\ll x^{\frac{1}{d+1}}\,.$$
We deduce that
\[
\theta(x;L/K;t)=\mu(d)\langle t,r_d \rangle x^{\frac{1}{d}}+o( x^{\frac{1}{d}})\,.
\]
Summing by parts as in the proof of~\cite{FJ2}*{Lemma 3.1} gives the result.
    \item Again by assumption combined with~\eqref{inc-excl} and~\eqref{Induction}, we have for $x\geq2$: $$\theta(x;L/K;t)=\underset{\ell\geq 1}{\sum}\mu(\ell)\psi(x^{\frac{1}{\ell}};L/K;t\circ f_\ell)=0\,.$$ 
A summation by parts allows us to conclude. 
\end{enumerate}


\end{proof}

  If $C_1$ and $C_2$ are two conjugacy classes of $G$, consider $t_{C_1,C_2}$ (see~\eqref{def:C1C2}). We have $t_{C_1,C_2}^+=0$ if and only if $C_1$ and $C_2$ are contained in the same conjugacy class of $G^+$; indeed if $C\subset G$ is a conjugacy class contained in a conjugacy class $C^+$ of $G^+$, then
\begin{equation}\label{eq:indconj}
\left(\mathds 1_C\right)^+=\frac{|C||G^+|}{|G||C^+|}\mathds 1_{C^+}\,.
\end{equation}

 Note the following.
  \begin{enumerate}
      \item If $t_{C_1,C_2}$ satisfies the hypotheses of Proposition \ref{gen}(1), then $L/K$ exhibits an extreme bias relative to $C_1$ and $C_2$. Note also that when $d$ is the product of an odd number of prime numbers (for example, if $d=p$ is prime), then the Frobenius elements will be preponderant in the conjugacy class containing the fewest $d$-th roots of elements in $G$. However, when $d$ is a product of an even number of prime numbers, the bias is towards the class with more $d$-th roots.
      \item If $t_{C_1,C_2}$ satisfies the hypotheses of Proposition \ref{gen}(2), then for all $x\geq 2$ we have : $$ \frac{|G|}{|C_1|}\pi(x;L/K;C_1)=\frac{|G|}{|C_2|}\pi(x;L/K;C_2)\,.$$ 
  \end{enumerate}
  
The conditions $(t\circ f_\ell)^+=0$ for square free $1\leq \ell < d$ and $\langle t,r_d\rangle \ne 0$ in Proposition~\ref{gen} are incompatible if $G$ is normal in $G^+$. Indeed, if $G\lhd G^+$ and $t^+=(t\circ f_1)^+=0$, then for all $k\geq 1$ we have $\langle t,r_k \rangle =0$ (see~\cite{FJ}*{Corollary 3.10}).  Corollary \ref{Normal} discusses the consequence on prime ideal counting functions. 

\begin{cor}\label{Normal}
    With notation as in Proposition~\ref{gen}, assume $G\lhd G^+$ and 
    $t^+=0$. For all $x\geq2$, we have:
    $$\pi(x;L/K;t)=\pi_\nr(x;L/K;t)=0\,,$$
    where $\pi_\nr(x;L/K;t)$ is defined as $\pi(x;L/K;t)$ to which one subtracts the contribution of the prime ideals of $O_K$ ramifying in $L$.
\end{cor}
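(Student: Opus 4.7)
The plan is to reduce both equalities to the vanishing of $t^+$ via the normality of $G$. For $\pi(x;L/K;t)=0$ I will verify the hypothesis of Proposition~\ref{gen}(2), and for $\pi_\nr(x;L/K;t)=0$ I will analyze the contribution coming from primes of $K$ lying above each prime $\mathfrak q$ of $k$, showing it vanishes via a double-coset sum identification with an induced quantity forced to be zero.

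For the first equality, fix $a\in G^+$ and $\ell\geq 1$. If $a\notin G$, normality prevents any $G^+$-conjugate of $a$ from lying in $G$ (otherwise $a=g(g^{-1}ag)g^{-1}\in gGg^{-1}=G$), so the defining sum of $(t\circ f_\ell)^+(a)$ is empty. If $a\in G$, then every $G^+$-conjugate of $a$ lies in $G$ by normality, and
\[(t\circ f_\ell)^+(a)=\frac{1}{|G|}\sum_{g\in G^+}t((g^{-1}ag)^\ell)=\frac{1}{|G|}\sum_{g\in G^+}t(g^{-1}a^\ell g)=t^+(a^\ell)=0\,.\]
Thus $(t\circ f_\ell)^+=0$ for every $\ell\geq 1$, and Proposition~\ref{gen}(2) yields $\pi(x;L/K;t)=0$ for all $x\geq 2$.

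For the second equality, fix a prime $\mathfrak q$ of $k$ and a prime $\mathfrak P$ of $L$ above $\mathfrak q$, with decomposition $D^+\subset G^+$ and inertia $I^+\subset D^+$. Normality gives $G\cap\gamma I^+\gamma^{-1}=\gamma(G\cap I^+)\gamma^{-1}$ for every $\gamma\in G^+$, which produces the dichotomy that all primes of $K$ above $\mathfrak q$ are either unramified in $L/K$ (case $G\cap I^+=\{1\}$) or all ramified (case $G\cap I^+\neq\{1\}$). In the second case no prime contributes to $\pi_\nr$. In the first, let $f$ denote the common residue degree of $\mathfrak p|\mathfrak q$ in $K/k$; then $\varphi_\mathfrak q^f\in GI^+$ admits a unique decomposition $\varphi_\mathfrak q^f=\sigma\iota$ with $\sigma\in G\cap D^+$ and $\iota\in I^+$ (uniqueness from $G\cap I^+=\{1\}$). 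A direct computation, using that $D^+/I^+$ is abelian together with $G\cap I^+=\{1\}$ to show $D^+$ centralizes $\sigma$, identifies the Frobenius class of the prime of $K$ corresponding to the double coset $G\gamma D^+$ as the $G$-conjugacy class of $\gamma\sigma\gamma^{-1}$. Normality forces every double coset $G\gamma D^+$ to have the same size $|G||D^+|/|G\cap D^+|$, so
\[\sum_{[\gamma]\in G\backslash G^+/D^+}t(\gamma\sigma\gamma^{-1})=\frac{|G\cap D^+|}{|G||D^+|}\sum_{\gamma\in G^+}t(\gamma\sigma\gamma^{-1})=\frac{|G\cap D^+|}{|D^+|}\,t^+(\sigma)=0\,.\]
Summing this vanishing local contribution over $\mathfrak q$ with $N\mathfrak q^f\leq x$ gives $\pi_\nr(x;L/K;t)=0$.

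The main obstacle is the local analysis in the unramified case of the previous paragraph, particularly when $\mathfrak q$ is ramified in $L/k$ but primes above it stay unramified in $L/K$: this requires careful identification of the Frobenius element through the tower $L/K/k$ and uses normality of $G$ crucially at every step, namely to obtain the inertia dichotomy, to keep the double-coset sizes uniform, and to convert the sum over primes into the average defining $t^+(\sigma)$ that is annihilated by hypothesis.
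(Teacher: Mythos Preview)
Your proof is correct and follows essentially the same two-step strategy as the paper: first establish $(t\circ f_\ell)^+=0$ for all $\ell$ via normality of $G$ and apply Proposition~\ref{gen}(2), then handle $\pi_\nr$ by grouping primes of $K$ according to the prime of $k$ they lie over and showing each local sum vanishes because it is a positive multiple of a value of $t^+$.

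The one point worth noting is that your treatment of the second part is in fact more careful than the paper's. The paper writes $\varphi_{\mathfrak{P}/\mathfrak{p}}=\varphi_{\mathfrak{P}/\mathfrak{a}}^{f}$ and manipulates $\varphi_{\mathfrak{P}/\mathfrak{a}}$ directly without commenting on the case where $\mathfrak{a}$ ramifies in $L$ (so that $\varphi_{\mathfrak{P}/\mathfrak{a}}$ is only a coset of $I^+$). Your decomposition $\varphi_{\mathfrak q}^{f}=\sigma\iota$ with $\sigma\in G\cap D^+$, $\iota\in I^+$, together with the observation $G\cap I^+=\{1\}$, makes this step rigorous and pins down the Frobenius in $L/K$ unambiguously even when $\mathfrak q$ ramifies in $L/k$. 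The auxiliary remark that $D^+$ centralizes $\sigma$ (via $[D^+,D^+]\subset I^+$ and $G\cap I^+=\{1\}$) is what guarantees the value $t(\gamma\sigma\gamma^{-1})$ depends only on the double coset $G\gamma D^+$, which the paper's computation implicitly uses but does not isolate.
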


\begin{proof}
We first prove that  
$(t\circ f_\ell)^+=0$ for all $\ell\geq 1$.
 Let $\ell\geq1$ and $g \in G^+$. If $g\notin G$ then $aga^{-1}\notin G$ for all $a\in G^+$, hence $(t\circ f_\ell)^+(g)=0$. Otherwise $g\in G$, thus $aga^{-1} \in G$ for all $a \in G$. Therefore $(t\circ f_\ell)^+(g)=t^+\circ f_\ell (g)=0$. 
 
 By (2) of Proposition~\ref{gen} we deduce that for all $x\geq2$: $$\pi(x,L/K,t)=0\,.$$
In what follows we use the letter $\frak{p}$ for primes of $O_K$ and $\frak{P}$ for primes of $O_L$.\\
Denote $$\mathbb{P}:=\{\frak{a}\lhd O_k\ :\ \forall \frak{p}|\frak{a},\ \frak{p}\text{ is unramified in }L\}\,,$$
where $\frak{p|a}$ means that the prime $\frak{p}$ is above $\frak{a}$.\\
By assumption $K/k$, is Galois, thus for any non-zero $\frak{a}\lhd O_k$ we denote by $f_{\frak{a}}$ the common residual degree associated to any prime ideal $\frak p$ of $O_K$ above $\frak a$ (i.e. $N\frak{p}=(N\frak{a})^{f_\frak{a}}$).

We have: $$\pi_{\nr}(x;L/K;t)=\sum_{{\substack{\frak{a}\in \mathbb{P}\\(N\frak{a})^{f_\frak{a}}\leq x}}}\sum_{\frak{p}|\frak{a}}t(\varphi_\frak{p})\,. $$
It is sufficient to prove that for any $\frak a\in\mathbb P$, one has $$\underset{\frak{p}|\frak{a}}{\sum}t(\varphi_\frak{p})=0\,.$$
Fix $\frak{a}\in \mathbb{P}$, $\frak{p}_0\lhd O_K$ and $\frak{P}_0\lhd O_L$ such that $\frak{p}_0|\frak{a}$ and $\frak{P}_0|\frak{p}$. 
If $\frak{p}\lhd O_K$ and $\frak{P}\lhd O_L$ are such that $\frak{p}|\frak{a}$ and $\frak{P}|\frak{p}$, denote $G_{\frak{P/a}} \subset G^+$, $G_{\frak{P/p}}\subset G$ the corresponding decomposition groups and $\varphi_{\frak{P/a}}$, $\varphi_{\frak{P/p}}$ the corresponding Frobenius elements. We have $\varphi_{\frak{P/p}}=\varphi_{\frak{P/a}}^f$, where $f:=f_\frak{a}$.\\

We have \begin{align*}
    t^+(\varphi_{\frak{P}_0/\frak{p}_0})&=\frac{1}{|G|}\sum_{g\in G^+}t\left(\left(g\varphi_{\frak{P}_0/\frak{a}}g^{-1}\right)^f\right)=\frac{1}{|G|}\sum_{\frak{P}|\frak{a}}|G_{\frak{P}_0/\frak{a}}|t\left(\varphi_{\frak{P/a}}^f\right)\\
    &=\frac{1}{|G|}\sum_{\frak{p}|\frak{a}}|G_{\frak{P}_0/\frak{a}}|\sum_{\frak{P|p}}t\left(\varphi_{\frak{P/p}}\right)=\frac{|G_{\frak{\frak{P}_0}/\frak{a}}|}{|G_{\frak{P}_0/\frak{p}_0}|}\sum_{\frak{p}|\frak{a}}t(\varphi_\frak{p})\,.
\end{align*}
The result follows since $t^+=0$. 

\end{proof}

\begin{rk}  Note that when we have $\pi(x;L/K;t)=0$ for all $x\geq 2$, a simple induction allows us to show that for any prime $p\geq 2$, we have $\underset{\frak{p}|p}{\sum}t(\varphi_\frak{p})=0$. Thus, if $L/K/\Q$ is a tower of Galois extensions (which means the situation of Corollary~\ref{Normal} with $k=\Q$) we can conclude directly that $\pi_\nr(x;L/K;t)=0$ from $\pi(x;L/K;t)=0$.
\end{rk}

\subsection{Proof of Theorem \ref{princ}}
We will see that Theorem \ref{princ} is a consequence of Theorem \ref{OC-thm} which uncovers a group theoretic property implying the dichotomy given in Theorem \ref{princ}. Making precise which group theoretic properties are involved requires some preparation.

\begin{lem}\label{1}
    Let $G$ be a finite group, $a \in G$, and $\ell \geq 2$. Set $n = \Ord(a)$, $d = \gcd(\ell, n)$, and $\ell = d\ell'$. If $x$ is an $\ell$-th root of $a$, then $\Ord(x)=kdn$ for some divisor $k$ of $\ell'$.
\end{lem}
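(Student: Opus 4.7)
The plan is to exploit the two identities $x^{\ell n}=a^n=e$ and $\Ord(x^\ell)=n$. Writing $m=\Ord(x)$, the first identity gives $m\mid\ell n$, and the standard formula $\Ord(x^\ell)=m/\gcd(m,\ell)$ combined with $\Ord(x^\ell)=\Ord(a)=n$ yields the key relation
\[
m=n\cdot\gcd(m,\ell)\,.
\]
So setting $g:=\gcd(m,\ell)$, we have $m=ng$, and the task is reduced to showing that $g$ can be written as $kd$ with $k\mid\ell'$.

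The next step is to verify that $d\mid g$. Since $d=\gcd(\ell,n)$ divides $n$, and $n$ divides $m=ng$, we get $d\mid m$; since $d$ also divides $\ell$ by definition, it divides $\gcd(m,\ell)=g$. Then I would set $k:=g/d$ and observe that $kd=g$ divides $\ell=d\ell'$, whence $k\mid\ell'$. Substituting back gives $\Ord(x)=m=ng=n\cdot kd=kdn$, as required.

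The argument is purely elementary and routine; there is no real obstacle beyond keeping the divisibility bookkeeping straight. The only point that deserves a moment of care is justifying $\Ord(x^\ell)=n$: this uses that $x^\ell=a$ has order exactly $n$ by hypothesis, together with the standard identity $\Ord(y^\ell)=\Ord(y)/\gcd(\Ord(y),\ell)$ applied to $y=x$. Everything else is a direct consequence.
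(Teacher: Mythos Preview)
Your proof is correct and follows essentially the same route as the paper's: both hinge on the identity $\Ord(x)=\gcd(\Ord(x),\ell)\cdot\Ord(a)$ (which you derive via $\Ord(x^\ell)=\Ord(x)/\gcd(\Ord(x),\ell)$), and then both observe that $g:=\gcd(\Ord(x),\ell)$ is a multiple of $d$ dividing $\ell$, hence $g=kd$ with $k\mid\ell'$. Your write-up is simply a more detailed unpacking of the paper's two-line argument.
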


\begin{proof}
We have $\Ord(x) = \gcd(\Ord(x), \ell) \Ord(a)$. Now, $\gcd(\Ord(x), \ell)$ is a multiple of $d$ that divides $\ell$, so $\gcd(\Ord(x), \ell) = kd$ where $k | \ell'$. 

\end{proof}

For an element $a$ in a finite group $G$ and for $\ell, s \geq 1$, we set
\[
r_{\ell,s}(a)=|\{x\in G\colon x^\ell=a,\, \Ord(x)=s\}|\,.
\]
This defines a class function on $G$.


\begin{lem}\label{nb-rac}
    Let $G$ be a finite group, and let $a$ and $b$ be elements of $G$ of the same order. Let $\ell \geq 1$ be a square-free integer and suppose that for every divisor $k$ of $\ell$, we have $r_k(a) = r_k(b)$. Then, for every $s \geq 1$, 
    $$r_{\ell, s}(a) = r_{\ell, s}(b)\,.$$
\end{lem}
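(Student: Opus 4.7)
My plan is to organise the $\ell$-th roots of $a$ of a prescribed order according to the cyclic subgroup they generate. Let $n = \Ord(a) = \Ord(b)$, set $d = \gcd(\ell, n)$, and put $D = \{e : e \mid \ell,\ d \mid e\}$. By Lemma~\ref{1} combined with the square-freeness of $\ell$, the only $s$ for which $r_{\ell, s}(a)$ can be non-zero are those of the form $s = ne$ with $e \in D$, and then $\gcd(s, \ell) = e$.

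For a cyclic subgroup $H$ of $G$ of order $s$ containing $a$, the group $\mathrm{Aut}(H) \simeq (\Z/s\Z)^\times$ acts transitively on the elements of $H$ of order $n$ and commutes with the $\ell$-th power map. An orbit--stabiliser count shows that the number of generators $x$ of $H$ with $x^\ell = a$ equals $\varphi(s)/\varphi(n)$, independently of the choice of $H$. Denoting by $N(s, a)$ the number of cyclic subgroups of $G$ of order $s$ containing $a$, I obtain
\[
   r_{\ell, s}(a) \;=\; \frac{\varphi(s)}{\varphi(n)} \cdot N(s, a),
\]
so it suffices to prove $N(ne, a) = N(ne, b)$ for every $e \in D$.

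Applying the same factorisation to $r_k(a)$ for each $k \mid \ell$: for $e \in D$ one has $\gcd(e, n) = d$ (since $\gcd(e, n) \mid \gcd(\ell, n) = d$ and $d \mid \gcd(e, n)$), so the relevant orders contributing to $r_e$ are the $nf$ with $f \in D$ and $f \mid e$, yielding
\[
   r_e(a) \;=\; \sum_{\substack{f \in D \\ f \mid e}} \frac{\varphi(nf)}{\varphi(n)}\, N(nf, a) \qquad (e \in D).
\]
Ordering the finite set $D$ by divisibility, this is a triangular linear system in the unknowns $\{N(nf, a)\}_{f \in D}$ with strictly positive diagonal $\varphi(ne)/\varphi(n)$, hence invertible. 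Since $D \subseteq \{k : k \mid \ell\}$, the hypothesis gives $r_e(a) = r_e(b)$ for every $e \in D$, whence $N(nf, a) = N(nf, b)$ for every $f \in D$, and finally $r_{\ell, s}(a) = r_{\ell, s}(b)$ for every $s \geq 1$ (both sides vanishing when $s$ is not of the form $ne$, $e \in D$).

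The main technical input is the structural identity $r_{\ell, s}(a) = \varphi(s)/\varphi(n)\cdot N(s, a)$, which rests on the transitivity of $\mathrm{Aut}(H)$ on elements of a given order in a cyclic group; the remainder is routine triangular inversion.
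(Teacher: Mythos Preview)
Your proof is correct but follows a genuinely different route from the paper's. The paper argues by induction on $\ell$: writing $\ell=d\ell'$ with $d=\gcd(\ell,n)$, for each divisor $k$ of $\ell'$ with $k<\ell'$ it exhibits an explicit bijection between $\{x:x^\ell=a,\ \Ord(x)=kdn\}$ and $\{x:x^{kd}=a,\ \Ord(x)=kdn\}$ (using that $\ell'/k$ is a unit modulo $kdn$), thereby reducing $r_{\ell,kdn}(a)$ to $r_{kd,kdn}(a)$ and invoking the induction hypothesis; the remaining case $k=\ell'$ is then obtained by subtracting from $r_\ell(a)=r_\ell(b)$.

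Your approach instead isolates the structural identity $r_{\ell,s}(a)=\frac{\varphi(s)}{\varphi(n)}\,N(s,a)$ via an $\mathrm{Aut}(H)$-equivariance argument, and then inverts a triangular system indexed by $D$. This avoids induction entirely and makes transparent that the quantities $r_{\ell,s}(a)$ depend only on the cyclic-subgroup counts $N(s,a)$, which in turn are determined by the $r_e(a)$ for $e\in D$. The paper's argument is more hands-on and self-contained; yours is more conceptual and exposes a reusable formula. One small point left implicit in your write-up: when you apply the same factorisation to $r_e$ you are tacitly using $\gcd(nf,e)=f$ for $f\in D$, $f\mid e$, so that the $\mathrm{Aut}(H)$ count again yields $\varphi(nf)/\varphi(n)$; this follows from the square-freeness of $e\mid\ell$ and $\gcd(\ell',n)=1$, but it would be worth stating.
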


\begin{proof}
We proceed by induction on $\ell$. The case $\ell=1$ is trivial; fix $\ell \geq2$.

Let $n = \Ord(a)$, $d = \gcd(n, \ell)$, and $\ell = d\ell'$. By Lemma~\ref{1} any $\ell$-th root $x$ of $a$ in $G$ has order $\Ord(x)=kdn$ for some divisor $k$ of $\ell'$. Let $k | \ell'$ with $k < \ell'$ since $\ell$ is square-free, the integers $\frac{\ell'}{k}$ and $kdn$ are coprime. Hence there exists $u\in \Z$ such that $$u\frac{\ell'}{k}\equiv 1 \mod kdn\,. $$ For $m\geq1$ denote $R_{m}:=\{y\in G\ :\ y^m=a,\ \Ord(y)=kdn\}$. Define $$\iota:R_{kd}\to R_{\ell}\,,\quad x \mapsto x^{u}\,.$$ $\iota$ is well defined since $u$ is coprime to $kdn$ and $$\left(x^{u}\right)^\ell=(x^{u\frac{\ell'}{k}})^{kd}=x^{kd}=a\,.$$
Moreover, $\iota$ is injective since: if $x^u=z^u$ where $\Ord(x)=\Ord(z)=kdn$, then $x=x^{u\frac{\ell'}{k}}=z^{u\frac{\ell'}{k}}=z$.\\ Finally $\iota$ is surjective, since: if $y\in R_{\ell}$, then $x=y^{\frac{\ell'}{k}}\in R_{kd}$ (because $x^{kd}=y^\ell=a$ and $\gcd (\frac{\ell'}{k},kdn)=1$ ). Hence: $$\iota(x)=y^{u\frac{\ell'}{k}}=y\,.$$
Thus, $r_{\ell, kdn}(a) = r_{kd, kdn}(a)$, and similarly, $r_{\ell, kdn}(b) = r_{kd, kdn}(b)$.\\
The induction hypothesis, implies $$r_{\ell, kdn}(a)=r_{kd, kdn}(a) = r_{kd, kdn}(b)=r_{\ell, kdn}(b)\, .$$
Now, consider the case where $k = \ell'$. We have $$r_{\ell, l'dn}(a) = r_\ell(a) - \sum_{\substack{k | \ell'\\ k < \ell'}} r_{\ell, kdn}(a) = r_\ell(b) - \sum_{\substack{k | \ell'\\ k < \ell'}} r_{\ell, kdn}(b) = r_{\ell, \ell'dn}(b)\, .$$ The induction is now established. 
\end{proof}

In what follows we will use the following notation: Given a conjugacy class $C$ of a group $G$, we denote by $\Ord(C)$ (and we will call \emph{order of $C$}) the order of any (every) element of $C$.\\
Now, we can state and prove the following ``weak converse'' of Proposition \ref{Ind-Rac} (recall Remark~\ref{rem:converse}).

\begin{lem}\label{récip}
    Let $G^+$ be a finite group and $G$ a subgroup of $G^+$ such that 
    
     \begin{center} $(\star)$ \emph{Two elements $a,b\in G$ have the same order if and only if they are conjugate in $G^+$.}
    \end{center}
    Let $\ell \geq 2$ be a square-free integer and let $C_1,C_2$ be conjugacy classes of $G$ such that $\Ord(C_1)=\Ord(C_2)$. Consider the class function $t_{C_1,C_2}$ (see~\eqref{def:C1C2}) on $G$. If $\langle t_{C_1,C_2}, r_k \rangle = 0$ for all divisors $k$ of $\ell$, then $$(t_{C_1,C_2} \circ f_\ell)^+ = 0\,.$$
\end{lem}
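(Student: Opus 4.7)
The plan is to expand the induced function $(t_{C_1,C_2} \circ f_\ell)^+$ explicitly, use hypothesis $(\star)$ to collapse it to a manageable sum, and then reduce the vanishing statement to Lemma~\ref{nb-rac}. Throughout write $t = t_{C_1,C_2}$ and fix representatives $a\in C_1$ and $b\in C_2$; these satisfy $\Ord(a)=\Ord(b)$ by assumption.

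First I would unfold the induction formula. For $g\in G^+$ whose $G^+$-conjugacy class $g^{G^+}$ does not meet $G$, the value $(t\circ f_\ell)^+(g)$ is automatically zero, so it suffices to evaluate at elements $g\in G$. Since the map $h\mapsto h^{-1}gh$ on $\{h\in G^+:h^{-1}gh\in G\}$ has fibres of cardinality $|C_{G^+}(g)|$, a standard orbit–centralizer rearrangement gives
\[
(t\circ f_\ell)^+(g) = \frac{|C_{G^+}(g)|}{|G|}\sum_{g'\in G\cap g^{G^+}} t\bigl((g')^\ell\bigr).
\]
Hypothesis $(\star)$ is then invoked to identify $G\cap g^{G^+}$ with the set $G_n$ of \emph{all} elements of $G$ of order $n:=\Ord(g)$. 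The problem reduces to showing that, for every order $n$ occurring in $G$,
\[
S_n := \sum_{g'\in G_n} t\bigl((g')^\ell\bigr) = 0.
\]

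To analyze $S_n$ I would group the summands by the $G$-conjugacy class of $(g')^\ell$, exploiting the fact that the refined root-counting function $r_{\ell,n}$ is a class function on $G$. The contribution coming from the $C_i$-part of $t$ equals $\frac{|G|}{|C_i|}\cdot|C_i|\cdot r_{\ell,n}(\cdot)$ evaluated at a representative, so the coefficients collapse and one obtains
\[
S_n = |G|\bigl(r_{\ell,n}(a) - r_{\ell,n}(b)\bigr).
\]
On the other hand a direct computation of the inner product yields $\langle t,r_k\rangle = r_k(a)-r_k(b)$, so the lemma's hypothesis is equivalent to $r_k(a) = r_k(b)$ for every divisor $k$ of $\ell$. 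Because $\ell$ is squarefree and $\Ord(a)=\Ord(b)$, Lemma~\ref{nb-rac} now applies and produces $r_{\ell,n}(a) = r_{\ell,n}(b)$ for every $n\geq1$, so $S_n=0$ and the conclusion follows.

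The argument is essentially bookkeeping. The conceptual content is that $(\star)$ is exactly what is needed to convert the induction formula into an unweighted sum over all elements of $G$ of a fixed order, while Lemma~\ref{nb-rac} is the tool that upgrades equality of global root counts into equality of root counts weighted by the order of the root. The only mildly delicate step is checking that the combinatorial factors $|C_{G^+}(g)|$, $|C_1|$, $|C_2|$ and the normalizing constants $|G|/|C_1|$, $|G|/|C_2|$ in the definition of $t$ cancel cleanly to yield the tidy identity for $S_n$; no conceptual obstacle is expected beyond this.
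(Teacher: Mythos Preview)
Your proof is correct and follows essentially the same approach as the paper: both arguments group the $\ell$-th roots by their order, invoke $(\star)$ to collapse the $G^+$-conjugacy classes to fixed-order subsets of $G$, and then appeal to Lemma~\ref{nb-rac} to conclude. The paper carries this out by decomposing $t\circ f_\ell$ as a sum of class functions $\varphi_k$ supported on elements of a fixed order and showing each $\varphi_k^+=0$ via the induced-indicator formula, whereas you evaluate $(t\circ f_\ell)^+$ pointwise using the induction formula directly; these are dual presentations of the same computation.
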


\begin{proof}
 We will write $r_{\ell,s}(C)$ and $r_\ell(C)$ for the common value respectively assumed by $r_{\ell,s}$ and $r_\ell$ on the conjugacy class $C$ of $G$.

Suppose $r_\ell(C_1) = r_\ell(C_2) > 0$. Denote $n = \Ord(C_1)=\Ord(C_2)$, $d = \gcd(\ell, n)$, and $\ell = d\ell'$. For $i \in \{1, 2\}$ and $k | \ell'$, let $$D_{i, k} := \{x \in G \colon x^\ell \in C_i,\, \Ord(x) = kdn\}\,.$$
We have, for $i \in \{1, 2\}$ and $k | \ell'$, 
$$|D_{i, k}| = |C_i| r_{\ell, kdn}(C_i)\,.$$ 
According to Lemma \ref{nb-rac}, this implies that for all $k | \ell'$, 
\begin{equation}\label{Classeq}
    \frac{|D_{1, k}|}{|C_1|} = \frac{|D_{2, k}|}{|C_2|}\,.
\end{equation}
Set $\varphi_k = \frac{|G|}{|C_1|} \mathds 1_{D_{1, k}} - \frac{|G|}{|C_2|} \mathds 1_{D_{2, k}}$. For simplicity, write $t=t_{C_1,C_2}$. We express 
$$
t \circ f_\ell = \frac{|G|}{|C_1|} \sum_{k | \ell'} \mathds 1_{D_{1, k}} - \frac{|G|}{|C_2|} \sum_{k | \ell'} \mathds 1_{D_{2, k}} = \sum_{k | \ell'} \varphi_k\,.
$$
To show that $(t \circ f_\ell)^+ = 0$, it suffices to demonstrate that for all $k | \ell'$, we have $\varphi_k^+ = 0$.

Fix a divisor $k$ of $\ell'$. The sets $D_{i, k}$, for $i \in \{1, 2\}$, are conjugacy invariant in $G$. Thanks to the equality \eqref{Classeq}, either both $D_{1, k}$ and $D_{2,k}$ are empty, in which case $\varphi_k=0$, and hence $\varphi_k^+ = 0$, or both $D_{1, k}$ and $D_{2,k}$ are non-empty, and in this case we can write $D_{i, k} = \bigcup_{1 \leq j \leq r_i} F_{j, i}$ where the $F_{j, i}$ are conjugacy classes in $G$. By construction, for all $i, j$, we have $\Ord(F_{j, i}) = kdn$. Thus, there exists a conjugacy class $F^+$ in $G^+$ that contains all $F_{i, j}$. 
Recalling~\eqref{eq:indconj}, we have: 
\begin{align*}
    \varphi_k^+ &= \frac{|G|}{|C_1|} \sum_{1 \leq j \leq r_1} \frac{|F_{j, 1}||G^+|}{|G||F^+|} \mathds 1_{F^+} - \frac{|G|}{|C_2|} \sum_{1 \leq j \leq r_2} \frac{|F_{j, 2}||G^+|}{|G||F^+|} \mathds 1_{F^+}\\ &= \frac{|G^+|}{|F^+|} \left(\frac{|D_{1, k}|}{|C_1|} - \frac{|D_{2, k}|}{|C_2|}\right) = 0 \, ,
\end{align*}
which proves the lemma. 
\end{proof}

We are now ready to prove a generalization of Theorem~\ref{princ} where the relevant group theoretic property we use is highlighted.

\begin{thm}\label{OC-thm}
    Let $L/K/k$ be a tower of number fields such that $L/k$ is Galois. Denote $G=\Gal(L/K)$ and $G^+=\Gal(L/k)$. Assume that the couple $(G,G^+)$ satisfies $(\star)$ (see Lemma~\ref{récip}).
    
    Let $C_1$ and $C_2$ be conjugacy classes of $G$ such that $\Ord(C_1)=\Ord(C_2)$. Then two cases occur:
    \begin{enumerate}
        \item either for every square-free $\ell$, one has $r_\ell(C_1)=r_\ell(C_2)$, in which case, for all $x\geq 2$ one has
        $$\frac{|G|}{|C_1|}\pi(x;L/K;C_1)=\frac{|G|}{|C_2|}\pi(x;L/K;C_2)\,,$$
        \item or there exists $d\geq2$ square-free such that $r_d(C_1)\ne r_d(C_2)$, in which case there exists $A>0$ such that, up to exchanging $C_1$ and $C_2$, we have for all $x\geq A$,
        \[ \frac{|G|}{|C_1|}\pi(x;L/K;C_1)>\frac{|G|}{|C_2|}\pi(x;L/K;C_2)\,.
        \]
        Hence $L/K$ has in this case an extreme Chebyshev bias relative to $(C_1,C_2)$.
    \end{enumerate}
\end{thm}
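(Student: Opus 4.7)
The plan is to combine Lemma~\ref{récip} (which converts vanishing of inner products $\langle t,r_k\rangle$ into vanishing of induced class functions $(t\circ f_\ell)^+$) with Proposition~\ref{gen} (which converts the vanishing of these induced class functions into information about $\pi(x;L/K;t)$). Throughout I write $t=t_{C_1,C_2}$ and note the elementary identity
\[
\langle t,r_k\rangle_G = r_k(C_1)-r_k(C_2)\qquad (k\geq 1),
\]
so the dichotomy in the statement is phrased precisely in terms of the inner products governing Lemma~\ref{récip}.

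First I would dispose of the case $\ell=1$ that falls outside the range of Lemma~\ref{récip}: the property $(\star)$ together with $\Ord(C_1)=\Ord(C_2)$ forces $C_1$ and $C_2$ to lie in a common conjugacy class of $G^+$, so formula~\eqref{eq:indconj} gives $t^+=0$.

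For alternative (1), I assume $r_\ell(C_1)=r_\ell(C_2)$ for every square-free $\ell\geq 1$. Then for each square-free $\ell\geq 2$, every divisor $k$ of $\ell$ is itself square-free, whence $\langle t,r_k\rangle=0$. Lemma~\ref{récip} then yields $(t\circ f_\ell)^+=0$ for all square-free $\ell\geq 2$; combined with $t^+=(t\circ f_1)^+=0$ this verifies the hypothesis of Proposition~\ref{gen}(2), giving $\pi(x;L/K;t)=0$ for all $x\geq 2$, which is exactly the claimed equality.

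For alternative (2), I let $d\geq 2$ be the smallest square-free integer with $r_d(C_1)\neq r_d(C_2)$, which exists under the negation of alternative (1). By minimality, $\langle t,r_k\rangle=0$ for every square-free $k<d$. Fix a square-free $\ell$ with $1\leq \ell<d$: every divisor $k$ of $\ell$ is square-free and strictly less than $d$, so applying Lemma~\ref{récip} (or the opening observation when $\ell=1$) gives $(t\circ f_\ell)^+=0$. Since also $\langle t,r_d\rangle=r_d(C_1)-r_d(C_2)\neq 0$, Proposition~\ref{gen}(1) applies and yields
\[
\pi(x;L/K;t) \;=\; \mu(d)\,\bigl(r_d(C_1)-r_d(C_2)\bigr)\,\frac{x^{1/d}}{\log x}+o\!\left(\frac{x^{1/d}}{\log x}\right).
\]
The leading coefficient is a nonzero real number; swapping $C_1$ and $C_2$ if necessary to make it positive, one obtains $A>0$ such that $\pi(x;L/K;t)>0$ for all $x\geq A$, which is the extreme Chebyshev bias asserted in case (2). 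Theorem~\ref{princ} is then the specialization to $G^+=\mathfrak S(G)$ via Cayley's embedding, where $(\star)$ is automatic because two permutations of $G$ have the same cycle type (hence are $\mathfrak S(G)$-conjugate) as soon as they act on $G$ by translation by elements of the same order.

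The only place that requires real care is the bookkeeping at the boundary: one must verify that the minimality of $d$ in case (2) really feeds Lemma~\ref{récip} for \emph{every} square-free $\ell<d$ and not only for $\ell=d-1$, and that $\ell=1$ is handled separately. Once this is in place the argument is a clean concatenation of the two preparatory results.
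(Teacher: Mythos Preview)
Your proposal is correct and follows essentially the same route as the paper's proof: verify $t^+=0$ via $(\star)$ and~\eqref{eq:indconj}, then feed Lemma~\ref{récip} into Proposition~\ref{gen}(2) for case~(1) and into Proposition~\ref{gen}(1) for case~(2). Your version is in fact more carefully written than the paper's, since you make explicit the choice of $d$ as the \emph{smallest} square-free integer with $r_d(C_1)\neq r_d(C_2)$ (needed so that all divisors $k$ of any square-free $\ell<d$ satisfy $\langle t,r_k\rangle=0$), and you separate out the $\ell=1$ case which lies outside the scope of Lemma~\ref{récip}.
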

\begin{proof}
    Let $C_1$ and $C_2$ be two conjugacy classes in $G$ such that $\Ord(C_1)=\Ord(C_2)$. Let $t=t_{C_1,C_2}$ (recall~\eqref{def:C1C2}) so that $t^+=0$ by~\eqref{eq:indconj}.  In case (1), we use Lemma~\ref{récip} to conclude that for all square-free $\ell\geq2$, we have $(t\circ f_\ell)^+=0$. Therefore, by Proposition~\ref{gen}(2) we have $\pi(x;L/K;t)=0$ for all $x\geq2$.

    In case (2), we use again Lemma~\ref{récip} to deduce that for all square-free $\ell<d$ we have $(t\circ f_\ell)^+=0$. As $\langle t,r_d\rangle=r_d(C_1)-r_d(C_2) \ne 0$, we can apply Proposition~\ref{gen}(1) to conclude.
\end{proof}

We can now deduce Theorem~\ref{princ} from Theorem~\ref{OC-thm}.

\begin{proof}[Proof of Theorem~\ref{princ}]
Let $G$ and $G^+$ be as in the statement of Theorem \ref{princ}. By Hilbert's work, the inverse Galois problem over $k$ is solved for $G^+$, and therefore we can consider $L/k$ a Galois extension of group isomorphic to $G^+$. Denoting $K=L^G$, it suffices to prove that the tower $L/K/k$ satisfies the hypotheses of Theorem \ref{OC-thm}. Thus, it suffices to prove that any two elements of $G$ having the same order are conjugates in $G^+$. This is an easy consequence of Cayley's Theorem, for completeness we reprove it here. Let $\Psi \colon G \rightarrow \frak{S}(G)$ be the injective morphism induced by the action of $G$ on itself by left translation. Let $a,b \in G$ be elements of the same order $r$.
The orbits of the permutation $\Psi(a)$ are exactly the orbits of the action of $\langle a \rangle$ on $G$ by left translation. These orbits have the form $\{a^ig\colon 1\leq i\leq r\}$ with $g \in G$. They all have the same cardinality, which is $r$, so $\Psi(a)$ is a product of cycles of order $r$, and the same holds for $\Psi(b)$. Therefore, the permutations $\Psi(a)$ and $\Psi(b)$ have the same cycle type in $\mathfrak S(G)$, so they are conjugate. Conversely, two conjugate elements must have the same order, and an injective morphism preserves the order.

\end{proof}

\begin{rk} One might wonder, given a group $G$, which groups $G^+$ satisfy condition $(\star)$ in Lemma~\ref{récip}. Obviously, any group containing $\frak{S}(G)$ satisfies this property. Since we are looking for ``minimal'' such groups, we will restrict ourselves, for simplicity, to the case of an abelian $p$-group of the form $G=\Z/p^m\Z\times \Z/p^n\Z$ (however the idea works more generally for all abelian groups and also for some non-abelian groups) and describe a way of constructing a suitable $G^+$.
\begin{enumerate}
    \item First one has the natural injection $G \hookrightarrow \left(\Z/p^m\Z\right)^2$.
    \item Next notice that $\left(\Z/p^m\Z\right)^2$ (and any homocyclic $p$-group) has the property that for any elements $a,b \in \left(\Z/p^m\Z\right)^2 $ of the same order, there exists an automorphism $f \in \mathrm{Aut} \left((\Z/p^m\Z)^2\right)$ such that $f(a)=b$.
    \item Consider $G^+=\left(\Z/p^m\Z\right)^2 \rtimes \text{Aut} \left((\Z/p^m\Z)^2\right)$: here the semi-direct product is given by the natural action of $\text{Aut} \left((\Z/p^m\Z)^2\right)$ on $\left(\Z/p^m\Z\right)^2$.
    \item It is easy to verify that if $a,b\in G$ have the same order and if $f\in \mathrm{Aut} \left((\Z/p^m\Z)^2\right)$ is such that $f(a)=b$, then one has $(1,f)(a,1)(1,f)^{-1}=(b,1)$.
\end{enumerate}
The drawback of this construction is the inverse Galois problem it leads to: is $\left(\Z/p^m\Z\right)^2 \rtimes\mathrm{Aut} \left((\Z/p^m\Z)^2\right)$ the Galois group of an extension of $\Q$ (or any number field $k$)?
\end{rk}

We turn to the proof of Corollary~\ref{Example1}.

\begin{proof}[Proof of Corollary~\ref{Example1}]
    \begin{enumerate}
        \item Assume $G=Q_8$ the quaternion group of order $8$. If $C_1\ne C_2$ are two conjugacy classes of the same order, then they both have order $4$ and hence for $\ell$ square-free, if $\ell$ is odd then $r_\ell(C_1)=1=r_\ell(C_2)$, and if $\ell$ is even then $r_\ell(C_1)=0=r_\ell(C_2)$. This is exactly the first case of Theorem \ref{OC-thm} which yields the result.
        \item Assume $G=Q_8\times H$ where $H=\langle a \rangle$ is a cyclic group of order $4$. Let $a=(-1,1)$ and $b=(1,a^2)$. We have $r_2(a)=12>4=r_2(b)$, so the result is a consequence of Theorem \ref{OC-thm}(2). 
    \end{enumerate}
\end{proof}

\section{A group theoretic property implying extreme Chebyshev biases}\label{Property_P}
In this section we will study the property $\mathcal{P}(d)$ below. Its relevance in producing extreme Chebyshev biases comes from Proposition~\ref{gen}(1) and this property has already been used to prove Theorem~\ref{OC-thm}. More precisely, when used in conjonction with $(\star)$ (see Lemma~\ref{récip}), a group satisfying $\mathcal P(d)$ gives rise to an extreme Chebyshev bias (provided one can solve the inverse Galois problem relevant to the situation).


\begin{defi}[Property $\mathcal{P}(d)$]\label{P} 
Let $G$ be a finite group and $d\geq 2$ a square-free integer. We say that $G$ satisfies $\mathcal P(d)$ if there exist $a,b\in G$ of the same order such that $0=r_d(a)< r_d(b)$ (recall~\eqref{eq:rl}) and that for all square-free $1< \ell <d$ one has $r_\ell(a)=r_\ell(b)$.

For simplicity, we will write ``$G$ satisfies $\mathcal P$'' when $G$ satisfies $\mathcal P(2)$.
\end{defi}


The following statement makes precise the way in which one can use $\mathcal P(d)$ to obtain extreme Chebyshev biases.

\begin{prop}\label{prop:PimpliesBias}
Let $k$ be a number field and let $G$ be a finite group satisfying $\mathcal P(d)$ for some $d\geq 2$. Then there exists a Galois extension $L/k$ and an embedding $G\hookrightarrow \Gal(L/k)$ such that $L/L^G$ has an extreme Chebyshev bias (in other words, case (2) of Theorem~\ref{princ} occurs for some conjugacy classes $C_1$ and $C_2$ of $G$).
\end{prop}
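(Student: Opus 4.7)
The plan is to take for $G^+$ the Cayley image of $G$ in $\mathfrak{S}(G)$, realize $G^+$ as a Galois group over $k$, and then read off the bias from Theorem~\ref{OC-thm} applied to the resulting tower. Concretely, since $G^+ \simeq \mathfrak{S}_{|G|}$ is a symmetric group, Hilbert's irreducibility theorem produces a Galois extension $L/k$ together with an isomorphism $\Gal(L/k) \simeq G^+$; set $K := L^G$. Exactly as in the last paragraph of the proof of Theorem~\ref{princ}, the Cayley embedding $G \hookrightarrow \mathfrak{S}(G)$ sends any two elements of the same order in $G$ to permutations with identical cycle type, hence to conjugate elements of $G^+$, so the pair $(G, G^+)$ satisfies property $(\star)$ of Lemma~\ref{récip}.

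Next I would invoke the hypothesis $\mathcal{P}(d)$ to produce $a, b \in G$ of the same order with $0 = r_d(a) < r_d(b)$ and $r_\ell(a) = r_\ell(b)$ for every square-free $\ell$ with $1 < \ell < d$. Since $r_1 \equiv 1$ on any finite group, the equality in fact holds for \emph{every} square-free $\ell < d$. The function $r_d$ is a class function on $G$, so the strict inequality $r_d(a) \neq r_d(b)$ forces $a$ and $b$ to lie in distinct conjugacy classes $C_a \neq C_b$ of $G$, while of course $\Ord(C_a) = \Ord(C_b)$.

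Theorem~\ref{OC-thm} is now applicable to $(C_a, C_b)$. The existence of a square-free $d$ with $r_d(C_a) \neq r_d(C_b)$ rules out case~(1) of that theorem, so case~(2) must occur: $L/K$ admits an extreme Chebyshev bias relative to $(C_a, C_b)$, which is precisely the conclusion of the proposition. There is no real obstacle to overcome: the realization of $G^+$ as a Galois group over $k$ is classical because $G^+$ is a symmetric group, and the rest is a direct combination of property $\mathcal{P}(d)$ with Theorem~\ref{OC-thm}; the only point worth stating explicitly is that, $r_d$ being a class function, the inequality $r_d(a) \neq r_d(b)$ automatically separates $a$ and $b$ into distinct $G$-conjugacy classes.
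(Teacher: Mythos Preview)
Your proof is correct and follows essentially the same route as the paper: realize $\mathfrak S(G)$ as a Galois group over $k$ via Hilbert, note that the Cayley embedding gives property~$(\star)$, and apply Theorem~\ref{OC-thm}(2) using the inequality $r_d(a)\neq r_d(b)$ furnished by $\mathcal P(d)$. Your explicit remark that $r_d$ is a class function, hence $C_a\neq C_b$, is a helpful detail the paper leaves implicit.
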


\begin{proof}
Let $G^+$ be a group containing $G$ such that $(G,G^+)$ satisfies condition $(\star)$ of Lemma~\ref{récip} and such that the inverse Galois problem for $G^+$ is solved over $k$ (\emph{e.g.} consider the Cayley embedding $G\hookrightarrow G^+=\mathfrak S(G)$ as explained in the proof of Theorem~\ref{princ}). Then, since $G$ satisfies $\mathcal P(d)$, Theorem~\ref{OC-thm}(2) applies and the proof is complete.
\end{proof}

    

Note that Property $\mathcal P(d)$ is not necessary for a group to produce an extreme Chebyshev bias: for example, Corollary~\ref{Example1}(2) shows that there exist extensions with an extreme Chebyshev bias relatively to conjugacy classes of some elements $a$ and $b$ where we have $0<r_2(a)<r_2(b)$.

The following Proposition gives an idea about the minimal size of groups satisfying $\mathcal{P}(p)$ when $p$ is a prime
\begin{prop}\label{prop:p3}
    Let \(p\) be a prime, and let \(G\) be a group satisfying the property \(\mathcal{P}(p)\) respectively. Then, \(|G| \geq p^3\).

\end{prop}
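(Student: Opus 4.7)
The plan is to argue by contradiction: assume $G$ satisfies $\mathcal{P}(p)$ with $|G|<p^3$ and, by minimality, that no proper subgroup does. Choose $a,b\in G$ realizing $\mathcal{P}(p)$ with common order $n$ and $0=r_p(a)<r_p(b)$, and pick $c\in G$ with $c^p=b$. I would first record three basic reductions: (i) $p\mid n$, otherwise $a=(a^{p^{-1}\bmod n})^p$ would be a $p$-th power; (ii) $\Ord(c)=np$, since $\Ord(c^p)=\Ord(c)/\gcd(p,\Ord(c))=n$ combined with $p\mid n$ rules out $\Ord(c)=n$; (iii) $a\notin\langle c\rangle$, for otherwise $a$ would lie in the unique order-$n$ subgroup $\langle c^p\rangle=\langle b\rangle$ of $\langle c\rangle$ and hence be a $p$-th power. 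By minimality I may also assume $G=\langle a,c\rangle$.

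Next write $n=p^st$ with $\gcd(t,p)=1$, $s\geq 1$. If $s\geq 2$, then $\Ord(c)=np=p^{s+1}t\geq p^3$ divides $|G|$, contradiction. Hence $s=1$, and since $p^2\mid|G|<p^3$ one has $|G|=p^2m$ with $1\leq m<p$ coprime to $p$. The Sylow count $n_p\mid m$ together with $n_p\equiv 1\pmod p$ forces $n_p=1$, so $G$ has a unique normal $p$-Sylow $P$ of order $p^2$. As $\langle c^t\rangle\subset G$ is a $p$-subgroup of order $p^2$, necessarily $P=\langle c^t\rangle$, cyclic. Let $Z\subset P$ be its unique subgroup of order $p$. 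The case $t=1$ is then immediate: $a$ is itself a $p$-element and lies in the unique $p$-Sylow $P=\langle c\rangle$, contradicting (iii). So $t\geq 2$.

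The key case $s=1,\ t\geq 2$ is treated via the commuting decomposition $a=a_pa_{p'}$, where $a_p\in P$ has order $p$ (so $a_p\in Z$) and $a_{p'}$ has order $t$. Let $\sigma\in\Aut(P)$ denote the automorphism induced by conjugation by $a_{p'}$. The commutation $a_pa_{p'}=a_{p'}a_p$ means $\sigma$ fixes the generator $a_p$ of $Z$, so $\sigma|_Z=\Id$ and $\sigma$ lies in the kernel of the restriction $\Aut(P)\to\Aut(Z)$, a cyclic group of order $p$. On the other hand $\Ord(\sigma)\mid\Ord(a_{p'})=t$, and coprimality $\gcd(p,t)=1$ forces $\sigma=\Id$. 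But then $a_{p'}$ centralizes $P$, and for each of the $p$ distinct $p$-th roots $z_p\in P$ of $a_p$ (there are $p$ because $P\cong\Z/p^2\Z$) the element $z_pz_{p'}$, with $z_{p'}\in\langle a_{p'}\rangle$ the unique $p$-th root of $a_{p'}$, satisfies $(z_pz_{p'})^p=a_pa_{p'}=a$. This produces $p$ distinct $p$-th roots of $a$ in $G$, contradicting $r_p(a)=0$.

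I expect the main obstacle to be precisely this final case $s=1,\ t\geq 2$. The argument hinges on the numerical coincidence $\gcd(p,p-1)=1$: the kernel of $\Aut(P)\to\Aut(Z)$ is a pro-$p$ group (cyclic of order $p$), whereas the order of $\sigma$ is forced to be coprime to $p$ by the $p'$-order of $a_{p'}$. The delicate part is setting up the commuting decomposition $a=a_pa_{p'}$ carefully so that $\sigma$ can be pinned down through its restriction to the characteristic subgroup $Z\subset P$.
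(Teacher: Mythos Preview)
Your argument is correct and rests on the same core idea as the paper: once one reduces to $G=\langle a,c\rangle$ with a normal cyclic $p$-Sylow $P\cong\Z/p^2\Z$, the $p'$-part of $a$ acts on $P$ fixing the order-$p$ subgroup $Z$, hence (having order dividing both $p$ and a $p'$-number) acts trivially, and then $a$ becomes a $p$-th power in $G$. The packaging differs slightly: you argue by contradiction from $|G|<p^3$, which forces $n_p=1$ via $n_p\mid m<p$, handle the case $p^2\mid\Ord(a)$ explicitly, and treat all primes $p$ uniformly; the paper instead shows the Sylow is \emph{not} normal and then counts Sylow conjugates to get $|G|\geq p(p-1)(p+1)$, with $p=2$ handled separately via non-cyclicity. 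Your organization is arguably a bit cleaner.

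One small wrinkle: the reduction ``by minimality I may assume $G=\langle a,c\rangle$'' tacitly claims that $\langle a,c\rangle$ inherits $\mathcal P(p)$, but the side conditions $r_\ell(a)=r_\ell(b)$ for square-free $1<\ell<p$ need not pass to a subgroup. This is harmless since you never use those conditions afterwards; simply phrase it as ``it suffices to bound $|\langle a,c\rangle|$'', noting that in that subgroup $a$ still has no $p$-th root while $b=c^p$ still does.
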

\begin{proof}
Assume that $G$ satisfies $\mathcal{P}(p)$ relatively to $(a,b)$. Since $a$ has no $p$-th roots, then $\Ord(b)=\Ord(a)$ is a multiple of $p$. Let $c$ be a $p$-th root of $b$, by lemma~\ref{1}, $\Ord(c)=p\Ord(b)$. We may assume that $G=\langle a, c \rangle$ and that $\Ord(a)=pd$ with $d$ coprime to $p$. Denote $\alpha:=c^d$, we have $\Ord(\alpha)=p^2$. By Theorem~\ref{Ab} $G$ cannot be cyclic, thus $|G|\geq 2p^2$. This proves the result when $p=2$. 

Assume that $p\geq 3$, and let $S$ be a $p$-Sylow subgroup of $G$ containing $\alpha$. If $|S|>p^2$ then $p^3$ divides $|G|$. Otherwise, we prove that $S=\langle \alpha \rangle$ is not normal in $G$.\\
By contradiction, assume that $\langle \alpha \rangle \lhd G$. We have $\langle a^d\rangle =\langle b^d \rangle = \langle \alpha ^p \rangle$. Thus, $\alpha^p \in \mathcal{Z}(G)$ is central. Moreover, the interior automorphism of $G$ given by $a^p$ induces an automorphism $\phi$ of $\Z/p^2\Z$ that fixes $p\Z/p^2\Z$ (since $\langle \alpha \rangle \lhd G$ has order $p^2$). There exists $0\leq u \leq p^2-1$ coprime to $p$ such that, for all $x\in \Z/p^2\Z$ we have $\phi(x)=ux$. As $\phi(p)=p$, then $p$ divides $u-1$. Thus, $u \in \{kp+1\ :\ 0\leq k \leq p-1 \}$. If $u=kp+1$, with $k\geq 1$, then, $\Ord(\phi)=p$ since $$(kp+1)^p \equiv 1 (\text{ mod }p^2)\, ,$$
this is not possible, since $\Ord(\phi)$ divides $\Ord(a^p)$ which is coprime to $p$. Thus, $u=1$. Hence, $a^p$ commutes with $\alpha$. By Bezout, there exists $u,v \in \Z$ such that $up+vd=1$. We have \begin{equation}\label{decomp}
    a=a^{dv}a^{pu}\, .
\end{equation}
Thus, $a$ commutes with $\alpha$. Since $a^d \in \langle \alpha ^p \rangle $ has a $p$-th root in $\langle \alpha \rangle$, then, by \eqref{decomp}, $a$ has a $p$-th root, which contradicts our assumption. Thus, $\langle \alpha \rangle$ is not normal in $G$. The number of $p$-Sylows of $G$ is, thus, atleast $p+1$ (since it is $\equiv 1 \text{ mod }p) $), since each $p$-Sylow contains $p(p-1)$ generators, thus $|G| \geq p(p-1)(p+1)=p^3-p$. Hence, as $|G|$ is a multiple of $p^2$, we have $|G|\geq p^3$.

\end{proof}

The remainder of this Section is split into an abstract study of $\mathcal P(d)$ (\S\ref{sec:studyP}) and a presentation of several classes of groups satisfying Property $\mathcal P(d)$ (\S\ref{sec:exchar}) and therefore producing examples of extreme Chebyshev biases by Proposition~\ref{prop:PimpliesBias}.


\subsection{Preliminary group theoretic results}\label{sec:studyP}
 We start by simplifying the definition of Property $\mathcal{P}(p)$ in the cases we will be mostly interested in (that is, in the case $p$ is prime).
 
\begin{prop}\label{Psimp}
    Let $G$ be a finite group and let $a,b \in G$ have the same order. If $p=2$ or if $G$ is a $p$-group, then $G$ satisfies $\mathcal{P}(p)$ relatively to $(a,b)$ if and only if:
    \begin{enumerate}
        \item $a$ has no $p$-th roots in $G$,
        \item $b$ has $p$-th roots in $G$.
    \end{enumerate}
\end{prop}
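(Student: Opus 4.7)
My approach will be a direct verification of the equivalence in the definition of Property $\mathcal{P}(p)$. The ``only if'' direction is purely formal: the definition of $\mathcal{P}(p)$ relative to $(a,b)$ contains the condition $0=r_p(a)<r_p(b)$, which unpacks exactly to the conjunction of (1) and (2). So the entire work is in the ``if'' direction.

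For the ``if'' direction, I will assume (1) and (2) hold, together with the standing hypothesis $\Ord(a)=\Ord(b)$, so that all that remains to check in Definition~\ref{P} is the compatibility $r_\ell(a)=r_\ell(b)$ for every square-free integer $\ell$ with $1<\ell<p$. I will then split along the two cases provided by the hypotheses on the pair $(p,G)$.

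If $p=2$, the range $1<\ell<2$ is empty, so the required compatibility is vacuous and nothing remains to prove. If $G$ is a $p$-group, I will use the elementary observation that every square-free $\ell$ with $1<\ell<p$ is coprime to $p$: since $p$ is prime, it suffices that all prime divisors of $\ell$ are strictly less than $p$. Consequently $\ell$ is coprime to the exponent $e$ of $G$, which is a power of $p$. Picking $u\in\Z$ with $u\ell\equiv 1\pmod{e}$, the map $x\mapsto x^u$ is a two-sided inverse of $f_\ell$ on $G$, so $f_\ell$ is a bijection of $G$ and every element admits a unique $\ell$-th root. In particular $r_\ell(a)=r_\ell(b)=1$, as required.

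There is no substantive obstacle: the entire content of the proposition is the remark that the $\ell$-th power map on a $p$-group is a bijection whenever $\gcd(\ell,p)=1$, which trivialises the ``small $\ell$'' compatibility conditions of $\mathcal{P}(p)$ in both the $p=2$ case (vacuously) and the $p$-group case.
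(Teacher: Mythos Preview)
Your proof is correct and follows essentially the same approach as the paper's: the forward direction is immediate from the definition, and for the converse you handle $p=2$ vacuously and, in the $p$-group case, observe that $f_\ell$ is a bijection of $G$ for every square-free $1<\ell<p$ since $\gcd(\ell,|G|)=1$, giving $r_\ell(a)=r_\ell(b)=1$. Your version is slightly more explicit in exhibiting the inverse $x\mapsto x^u$, but the argument is the same.
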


\begin{proof}
The direct implication is obvious. For the converse, we only need to prove that for square-free $2\leq \ell< p$ we have $r_\ell(a)=r_\ell(b)$.

If $p=2$ there is nothing to prove. Otherwise $p>2$ and in this case $G$ is a $p$-group. For every $2\leq \ell < p$ square-free, $\ell$ is coprime with the order of the group $G$ and therefore $f_\ell$ is a permutation of $G$. We deduce that $r_\ell(g)=1$ for all $g\in G$, and in turn that the condition $r_\ell(a)=r_\ell(b)$ always holds. We conclude that $G$ satisfies $\mathcal{P}(p)$.

\end{proof}

The following lemma shows that Property $\mathcal{P}(d)$ still holds when performing the direct product with any group (mostly this will be used in the case $d=p$ is prime).
\begin{lem}\label{Direct-product}
    Let $G_1, G_2$ be finite groups and let $d\in\N$ be a square-free integer. If $G_1$ satisfies $\mathcal{P}(d)$ relatively to $(a,b)$, then $G_1\times G_2$ satisfies $\mathcal{P}(d)$ relatively to $((a,1),(b,1))$.
\end{lem}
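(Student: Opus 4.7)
The plan is to reduce everything to the single observation that in a direct product, $\ell$-th roots split componentwise: an element $(u,v)\in G_1\times G_2$ satisfies $(u,v)^\ell=(x,y)$ if and only if $u^\ell=x$ and $v^\ell=y$. Consequently, for every $\ell\geq 1$ and every $(x,y)\in G_1\times G_2$,
\[
r_\ell^{G_1\times G_2}\bigl((x,y)\bigr)=r_\ell^{G_1}(x)\cdot r_\ell^{G_2}(y)\,.
\]
This is the only non-trivial point, and it is essentially immediate.

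Applying this identity at $(a,1)$ and $(b,1)$ yields $r_\ell^{G_1\times G_2}((a,1))=r_\ell^{G_1}(a)\cdot r_\ell^{G_2}(1)$ and similarly for $(b,1)$. I would then check the three conditions of $\mathcal{P}(d)$ one by one. First, $\mathrm{Ord}((a,1))=\mathrm{Ord}(a)=\mathrm{Ord}(b)=\mathrm{Ord}((b,1))$, so the elements have matching order. Second, $r_d^{G_1}(a)=0$ immediately forces $r_d^{G_1\times G_2}((a,1))=0$, while $r_d^{G_1}(b)>0$ combined with $r_d^{G_2}(1)\geq 1$ (the identity is always a $d$-th root of itself) gives $r_d^{G_1\times G_2}((b,1))>0$. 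Third, for every square-free $1<\ell<d$, the hypothesis $r_\ell^{G_1}(a)=r_\ell^{G_1}(b)$ multiplies through by the common factor $r_\ell^{G_2}(1)$ to yield $r_\ell^{G_1\times G_2}((a,1))=r_\ell^{G_1\times G_2}((b,1))$.

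There is no genuine obstacle here; the only tiny sanity check is that $r_\ell^{G_2}(1)\geq 1$ for every $\ell$, which is automatic since $1^\ell=1$ in $G_2$. With this in hand, all three requirements of Definition~\ref{P} hold for the pair $((a,1),(b,1))$ in $G_1\times G_2$, which concludes the proof.
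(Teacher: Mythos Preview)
Your proof is correct and follows essentially the same approach as the paper: both rely on the multiplicativity $r_\ell^{G_1\times G_2}((x,1))=r_\ell^{G_1}(x)\cdot r_\ell^{G_2}(1)$ (the paper writes $\eta_\ell$ for $r_\ell^{G_2}(1)$) and then check the conditions of $\mathcal{P}(d)$ directly. Your version is slightly more explicit about the order condition and the inequality $r_\ell^{G_2}(1)\geq 1$, but the argument is the same.
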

\begin{proof}
    For $\ell\geq 2$ and $x\in G_1$ we have $$r_\ell(x,1)=\eta_\ell \cdot r_\ell(x)\, ,$$ where $\eta_\ell\geq1$ is the number of $\ell$-th roots of $1$ in $G_2$. Thus, $0=r_d(a,1)=r_d(a)<\eta_d \cdot r_d(b)=r_d(b,1)$ and for square-free $2\leq \ell <d$ we have $r_\ell(a,1)=r_\ell(b,1)$. Hence, $G_1\times G_2$ satisfies $\mathcal{P}(d)$ relatively to $((a,1),(b,1))$.
\end{proof}

Thanks to this lemma, we can now prove Corollary~\ref{Example2}.

\begin{proof}[Proof of Corollary~\ref{Example2}]
Let $1\leq n<m$, by Lemma~\ref{Direct-product} it is sufficient to prove that $G=\Z/p^m\Z \rtimes \Z/p^n\Z$ satisfies Property $\mathcal{P}(p)$ relatively to $a=(0,1)$, $b=(p^{m-n},0)$. Since the element $a$ is not a $p$-th power in $G$, while the element $b$ is a $p$-th power, we conclude by Proposition~\ref{Psimp} combined with Proposition~\ref{prop:PimpliesBias}.

\end{proof}

Next we state and prove (for completeness, in view of their importance) two elementary lemmas. The first one is a direct consequence of Bézout's theorem. 

\begin{lem}\label{2}
Let $G$ be a finite abelian group, $p\geq 2$ a prime, $a\in G$, 
and let $d$ be an integer coprime to $p$. Then $a$ is a $p$-th power in $G$ if and only if $a^d$ is a $p$-th power in $G$.
\end{lem}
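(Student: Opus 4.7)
The plan is to prove the two implications separately. The forward direction is essentially immediate: if $a = b^p$ for some $b \in G$, then since $G$ is abelian we have $a^d = b^{pd} = (b^d)^p$, exhibiting $a^d$ as a $p$-th power in $G$. This direction uses neither the coprimality assumption nor Bézout's theorem.

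For the reverse direction, which is the substantive content, the plan is to invoke Bézout: since $\gcd(d,p)=1$, there exist integers $u,v \in \Z$ such that $ud+vp=1$. Assuming $a^d = c^p$ for some $c\in G$, one computes
\[
a = a^{ud+vp} = (a^d)^u \cdot (a^v)^p = (c^p)^u \cdot (a^v)^p = (c^u)^p \cdot (a^v)^p,
\]
and then uses commutativity of $G$ to collapse the right-hand side to $(c^u a^v)^p$, producing the desired $p$-th root of $a$.

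No serious obstacle arises; the lemma is a textbook manipulation, which is presumably why the paper presents it as a warm-up. The only step that genuinely relies on the abelian hypothesis on $G$ is the final recombination $(c^u)^p (a^v)^p = (c^u a^v)^p$ — without commutativity one could not merge the two $p$-th powers into a single one, and the conclusion could fail in general. (The intermediate rewrites involving only powers of $a$ alone do not, strictly speaking, require $G$ to be abelian, since powers of a fixed element always commute.)
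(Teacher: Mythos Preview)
Your proof is correct and follows essentially the same route as the paper: the forward direction is declared obvious, and the reverse direction uses B\'ezout's identity $ud+vp=1$ to write $a=(a^d)^u\cdot (a^v)^p$ and conclude by commutativity. The only cosmetic difference is that the paper stops at the factorization $a=(a^u)^p\cdot (a^d)^v$ without naming an explicit $p$-th root, whereas you carry the computation through to $(c^u a^v)^p$.
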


\begin{proof}
The direct implication is obvious. Conversely, Bézout's Theorem asserts that there exist integers $u$ and $v$ such that $pu + dv = 1$.
The equality $a = (a^u)^p \cdot (a^d)^v$ shows that if $a^d$ is a $p$-th power, then so is $a$.

\end{proof}

The second lemma will notably be combined with Theorem \ref{BN} to construct more groups satisfying $\mathcal{P}(p)$ in the non-abelian case.

\begin{lem}\label{Imp}
Let $G$ be a group, $p$ a prime number, $x \in G$ of order $p^m$ with $m\geq 2$, and $D$ a subgroup of $G$. Let $\alpha=x^{p^{m-1}}$, then the following assertions are equivalent.
\begin{enumerate}
    \item There exists an integer $k\in\{1,\ldots,m-1\}$ such that $ \langle x^{p^{k}} \rangle \cap D= \{1\}$.
    \item $\alpha \notin D$.
    \item $\langle x \rangle \cap D = \{1\}$.
\end{enumerate}
\end{lem}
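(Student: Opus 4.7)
The plan is to exploit the very simple subgroup lattice of the cyclic $p$-group $\langle x\rangle$: since it has order $p^m$, its subgroups form a chain
\[
\{1\}=\langle x^{p^m}\rangle \subsetneq \langle x^{p^{m-1}}\rangle=\langle\alpha\rangle\subsetneq\langle x^{p^{m-2}}\rangle\subsetneq\cdots\subsetneq\langle x\rangle,
\]
and in particular $\langle\alpha\rangle$ is the unique minimal non-trivial subgroup of $\langle x\rangle$. I would prove the equivalences by establishing the cycle $(3)\Rightarrow (1)\Rightarrow(2)\Rightarrow(3)$.

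First, $(3)\Rightarrow(1)$ is immediate: for any $k\in\{1,\dots,m-1\}$ one has $\langle x^{p^k}\rangle\subseteq\langle x\rangle$, so $\langle x^{p^k}\rangle\cap D\subseteq\langle x\rangle\cap D=\{1\}$.

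Next, $(1)\Rightarrow(2)$: assume $\langle x^{p^k}\rangle\cap D=\{1\}$ for some $k\in\{1,\dots,m-1\}$. Because $k\leq m-1$, we have $p^{m-1}=p^k\cdot p^{m-1-k}$, so $\alpha=x^{p^{m-1}}\in\langle x^{p^k}\rangle$. If $\alpha$ lay in $D$, then $\alpha\in\langle x^{p^k}\rangle\cap D=\{1\}$, contradicting $\mathrm{Ord}(\alpha)=p\geq 2$.

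Finally, $(2)\Rightarrow(3)$: the intersection $\langle x\rangle\cap D$ is a subgroup of the cyclic $p$-group $\langle x\rangle$, hence coincides with one of the $\langle x^{p^j}\rangle$. If it were non-trivial, it would contain the unique minimal non-trivial subgroup $\langle\alpha\rangle$, forcing $\alpha\in D$ and contradicting (2). Therefore $\langle x\rangle\cap D=\{1\}$.

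No step is really an obstacle; the whole argument rests on the fact that subgroups of a cyclic $p$-group are totally ordered by inclusion and that $\langle\alpha\rangle$ is the unique subgroup of order $p$. The only point to be careful about is keeping track of the hypothesis $k\leq m-1$ in the implication $(1)\Rightarrow(2)$, which is exactly what ensures $\alpha\in\langle x^{p^k}\rangle$.
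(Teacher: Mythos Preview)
Your proof is correct and follows essentially the same route as the paper: both establish the cycle $(3)\Rightarrow(1)\Rightarrow(2)\Rightarrow(3)$, using that $\alpha\in\langle x^{p^k}\rangle$ for $(1)\Rightarrow(2)$ and that any non-trivial subgroup of the cyclic $p$-group $\langle x\rangle$ contains $\langle\alpha\rangle$ for $(2)\Rightarrow(3)$. The only cosmetic difference is that you frame the latter step via the totally ordered subgroup lattice of $\langle x\rangle$, whereas the paper picks a specific non-trivial element $x^j\in D$ and observes that $\langle x^j\rangle$ must contain the unique order-$p$ subgroup $\langle\alpha\rangle$.
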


\begin{proof}
$1) \implies 2)$ We have $\alpha=(x^{p^k})^{p^{m-1-k}}\in \langle x^{p^k} \rangle\setminus\{1\}$, so $\alpha \notin D$.

$2) \implies 3)$ Suppose, by contradiction, that 
there exists $j \geq 1$ such that $x^j \in D \setminus \{1\}$. Let $r \geq 1$ be such that $x^j$ is of order $p^r$ (such an $r$ exists since the order of $x^j$ divides $p^m$). The cyclic $p$-groups $\langle x^j \rangle$ and $\langle x\rangle$ both contain a unique subgroup of order $p$, which is none other than $ \langle \alpha \rangle$. This contradicts $\alpha \notin D$.

$3) \implies 1)$ is trivial.

\end{proof}

A natural question that arises is whether we can reduce the study of Property $\mathcal{P}(p)$ to $p$-groups. The following proposition relates, for a given group, the property $\mathcal{P}(p)$ to the property $\mathcal{P}(p)$ for its $p$-Sylow subgroups.

\begin{prop}\label{Sylow}
  Let $G$ be a finite group, and let $S \subseteq G$ be a $2$-Sylow subgroup of $G$. Then one has
  \begin{enumerate}
      \item If $G$ is abelian and $p\geq2$ is a prime, then $G$ satisfies $\mathcal{P}(p)$ if and only if its (unique) $p$-Sylow subgroup $P$ satisfies $\mathcal{P}(p)$.
      \item If $S$ is normal in $G$ (\textit{e.g.} if $G$ is abelian), and if $S$ satisfies $\mathcal{P}$ relatively to some $a,b\in S$, then $G$ satisfies $\mathcal{P}$ relatively to $a,b$.
      \item If $S$ satisfies $\mathcal{P}$ relatively to some $a,b\in S$ and if all $G$-conjugates of $a$ that belong to $S$ are non-squares in $S$ (e.g. if $a\in \mathcal{Z}(G)$), then $G$ satisfies $\mathcal{P}$ relatively to 
      $a,b$. 
  \end{enumerate}
\end{prop}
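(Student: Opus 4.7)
My plan is to treat the three parts separately, invoking the simplified criterion for $\mathcal{P}(p)$ given in Proposition~\ref{Psimp} together with classical Sylow theory. For Part (1), I would use the decomposition $G=P\times Q$ with $|Q|$ coprime to $p$, which exists because $G$ is abelian. Since $\gcd(p,|Q|)=1$, raising to the $p$-th power is a bijection on $Q$, so for any $x=(x_P,x_Q)\in G$ one has $r_p^G(x)=r_p^P(x_P)$; similarly, for every square-free $\ell$ with $2\leq\ell<p$, $\gcd(\ell,|P|)=1$ gives $r_\ell^P\equiv1$, so $r_\ell^G(x)=r_\ell^Q(x_Q)$. The reverse implication then follows by lifting a witness $(a_P,b_P)$ in $P$ to $((a_P,1),(b_P,1))$ in $G$; the direct implication uses the fact that $\Ord(a)=\Ord(b)$ forces equality of $p$-parts $\Ord(a_P)=\Ord(b_P)$ in $P$. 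In both directions, no condition on $\ell<p$ is required on the $P$-side since $P$ is a $p$-group (again by Proposition~\ref{Psimp}).

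For Part (2), the crucial input is that $|G:S|$ is odd. A square root of $b$ in $S$ is automatically a square root in $G$, so $r_2^G(b)>0$. Conversely, if $g\in G$ satisfies $g^2=a\in S$, then the image $gS\in G/S$ has order dividing $2$ in a group of odd order, hence $gS=S$ and $g\in S$; it follows that $r_2^G(a)=r_2^S(a)=0$.

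For Part (3), normality of $S$ is lost but any square root of $a$ in $G$ must have $2$-power order (since $\Ord(a)$ is a power of $2$; note that $a=1$ is excluded because it would already contradict $\mathcal{P}$ in $S$), and therefore lies in some $2$-Sylow of $G$. All $2$-Sylows being $G$-conjugate to $S$, if $g\in G$ satisfies $g^2=a$ there exists $h\in G$ with $s:=hgh^{-1}\in S$, whence $hah^{-1}=s^2$ is both a $G$-conjugate of $a$ lying in $S$ and a square in $S$, contradicting the hypothesis. The parenthetical case $a\in\mathcal{Z}(G)$ is then immediate, $a$ being its only $G$-conjugate.

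I expect Part (3) to be the main obstacle: it requires the idea of conjugating the square root $g$ into $S$ (rather than trying to move $S$ around $g$), and then applying the hypothesis to the matching conjugate $hah^{-1}$ of $a$. Once this Sylow-conjugation step is in place, Parts (2) and (1) reduce respectively to a short quotient argument and to routine book-keeping on the direct-product decomposition.
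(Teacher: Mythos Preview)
Your proposal is correct and follows essentially the same approach as the paper. The only cosmetic differences are: in Part~(1), the paper passes from $a,b\in G$ to elements of $P$ by raising to the power $d=|G|/|P|$ and invoking Lemma~\ref{2}, whereas you project onto the $P$-factor of $G=P\times Q$ directly (these coincide up to a unit exponent); in Part~(2), the paper argues that a square root $c$ of $a$ has $2$-power order via Lemma~\ref{1} and hence lies in $S$, while your quotient argument in $G/S$ reaches the same conclusion. Part~(3) is identical to the paper's proof.
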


\begin{proof}
\begin{enumerate}
    \item Let $a,b \in G$ be such that $G$ satisfies $\mathcal{P}(p)$ relatively to $a,b$. Set $d=|G|/|P|$, and consider $a'=a^d$ and $b'=b^d$. We have $a',b'\in P$ and since $p\nmid d$, Lemma~\ref{2} implies that $a'$ is not a $p$-th power in $G$ while $b'$ is a $p$-th power in $G$. By Lemma~\ref{1}, a $p$-th root of $b'$ has $p$ power order and thus lies in $P$. We conclude by Proposition~\ref{Psimp}.
    
    The converse is trivial since, by the Structure Theorem for finite abelian groups, $P$ is a direct factor of $G$. Thus, if $P$ satisfies $\mathcal{P}(p)$, then $G$ satisfies $\mathcal{P}(p)$ by Lemma~\ref{Direct-product}.
    \item By Proposition~\ref{Psimp} it is enough to show that $a$ is not a square in $G$. By contradiction, if $c \in G$ is such that $c^2 = a$, then $\Ord(c) = 2\Ord(a)$ (by Lemma~\ref{1}) is a power of $2$, so $c$ belongs to the unique $2$-Sylow subgroup $S$ of $G$, contradicting the fact that $a$ is not a square in $S$. 
    \item By contradiction, if $G$ does not satisfy $\mathcal{P}$ with respect to $a,b$ then $a$ is the square of some $c\in G$. However $a\in S$, and therefore $c$ has $2$-power order. By Sylow's Theorem, there exists $c'\in S$ which is a $G$-conjugate of $c$. Hence $c'^2$ is a square in $S$ and is a $G$-conjugate of $a$. This contradicts the assumption, thus  $G$ satisfies $\mathcal{P}$ relatively to $a,b$.
\end{enumerate}
\end{proof}

Keeping the notation as in Proposition~\ref{Sylow}, the fact that $S$ satisfies $\mathcal{P}$ relatively to $a,b$ does not imply in general that $G$ satisfies $\mathcal{P}$ relatively to $a,b$. This can be seen in the following example.
\begin{exe}\label{SD}
The subgroup $S:=\langle (1\ 2\ 3\ 4),(1\ 2)(3\ 4) \rangle \subset \frak{S}_4$ is a $2$-Sylow subgroup. If $a'=(1\ 2)(3\ 4)$ and $b'=(1\ 3)(2\ 4)$ then $S$ satisfies $\mathcal{P}$ with respect to $a',b'$. However, $a'$ and $b'$ are conjugates in $\frak{S}_4$, so for any choice of conjugates $a,b$ of $a',b'$ respectively, $\frak{S}_4$ does not satisfy $\mathcal{P}$ relatively to $a,b$. 
\end{exe}

Conversely, if $G$ satisfies $\mathcal{P}$ we cannot conclude in general that its $2$-Sylow subgroups satisfy $\mathcal{P}$, as the following example shows. 
\begin{exe}
    Consider the semi-direct product $\Z/6\Z \rtimes \Z/12\Z$ where the structure is defined as follows. Let $\Z/3\Z$ act trivially on $\Z/6\Z$ and let $\Z/4\Z$ act on $\Z/2\Z\times \Z/3\Z \simeq \Z/6\Z$ in such a way that elements of order $4$ of $\Z/4\Z$ act by inversion on $\Z/3\Z$ and trivially on $\Z/2\Z$. Let $\alpha_1$, $\alpha_2$ be the (unique) elements of order $2$ in $\Z/6\Z$ and $\Z/12\Z$ respectively. The subgroup $\langle \alpha_1,\alpha_2 \rangle$ is normal in  $(\Z/6\Z \rtimes \Z/12\Z$) and we consider $G:=(\Z/6\Z \rtimes \Z/12\Z)/\langle \alpha_1,\alpha_2 \rangle$. Then the $2$-Sylow subgroups of $G$ are isomorphic to $\Z/4\Z$ which does not satisfy $\mathcal{P}$, whereas $G$ satisfies $\mathcal{P}$ relatively to the classes of a generator of $\Z/6\Z$ and an element of order $6$ in $\Z/12\Z$.
    
\end{exe}

\subsection{Examples of groups enjoying $\mathcal{P}(d)$ and characterization for certain classes}\label{sec:exchar}

We start by focusing on $p$-groups. Our first characterisation relates Property $\mathcal{P}(p)$ to cyclic subgroups of $G$.  



\begin{prop}\label{Caractérisation}
Let $G$ be a $p$-group. Then $G$ satisfies $\mathcal{P}(p)$ if and only if $G$ has two maximal cyclic subgroups of distinct orders.
\end{prop}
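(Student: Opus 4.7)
The plan is to exploit Proposition~\ref{Psimp}, which tells us that for a $p$-group $G$, the property $\mathcal{P}(p)$ holds if and only if there exist $a,b\in G$ of the same order such that $a$ is not a $p$-th power while $b$ is a $p$-th power. So the statement becomes purely structural: we must show that such a pair exists exactly when $G$ admits two maximal cyclic subgroups of distinct orders. The key bridge between the two formulations is the following observation, which I will isolate as the first step: in a $p$-group, an element $a$ fails to be a $p$-th power if and only if $\langle a\rangle$ is a maximal cyclic subgroup of $G$. Indeed, if $\langle a\rangle\subsetneq \langle m\rangle$ then $a=m^k$ with $p\mid k$ (otherwise $k$ would be coprime to the $p$-power order of $m$ and $\langle a\rangle=\langle m\rangle$), so $a=(m^{k/p})^p$; the converse is immediate by Lemma~\ref{1} since a $p$-th root $c$ of $a$ satisfies $\langle a\rangle\subsetneq\langle c\rangle$.

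For the forward direction, I assume $G$ satisfies $\mathcal{P}(p)$ via the pair $(a,b)$. By the observation above, $\langle a\rangle$ is itself a maximal cyclic subgroup of order $\Ord(a)=\Ord(b)$. Writing $b=c^p$, Lemma~\ref{1} gives $\Ord(c)=p\Ord(b)$, and embedding $\langle c\rangle$ into any maximal cyclic subgroup $M$ of $G$ yields $|M|\geq p\Ord(a)>|\langle a\rangle|$. Thus $\langle a\rangle$ and $M$ are two maximal cyclic subgroups of $G$ of distinct orders.

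For the converse, suppose $M_1$ and $M_2$ are maximal cyclic subgroups with $|M_1|=p^s<p^t=|M_2|$. Take $a$ to be any generator of $M_1$, so $\langle a\rangle=M_1$ is maximal cyclic and hence (by the first step) $a$ is not a $p$-th power in $G$. Let $y$ be a generator of $M_2$ and set $b=y^{p^{t-s}}$, an element of order $p^s=\Ord(a)$; since $t-s\geq 1$, we have $b=(y^{p^{t-s-1}})^p$, so $b$ is a $p$-th power. Applying Proposition~\ref{Psimp} to the pair $(a,b)$ produces $\mathcal{P}(p)$.

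I do not anticipate a real obstacle here: once the preliminary equivalence between ``not a $p$-th power in a $p$-group'' and ``generates a maximal cyclic subgroup'' is noted, both directions are short manipulations using only Lemma~\ref{1} and Proposition~\ref{Psimp}. The mildly subtle point is matching orders on the $\mathcal{P}(p)$ side, which is handled by truncating a generator of the larger maximal cyclic subgroup to the appropriate power $p^{t-s}$.
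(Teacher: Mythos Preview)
Your proof is correct and follows essentially the same route as the paper's: both directions hinge on the equivalence, in a $p$-group, between ``$a$ is not a $p$-th power'' and ``$\langle a\rangle$ is a maximal cyclic subgroup'', combined with Lemma~\ref{1} and Proposition~\ref{Psimp}. The only difference is presentational: you isolate that equivalence as an explicit preliminary step, whereas the paper argues it inline in each direction.
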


\begin{proof}
Assume that $G$ satisfies $\mathcal{P}(p)$ with respect to $a,b$. Let $\langle c \rangle$ be a maximal cyclic subgroup of $G$ containing $b$. Writing $b=c^k$, for some $k\in \N$, we necessarily have $p\mid k$, since otherwise $b$ generates a maximal cyclic subgroup (namely $\langle c\rangle$) of $G$, contradicting the fact that $b$ is a $p$-th power in $G$. 
Therefore $\Ord(c) > \Ord(b) = \Ord(a)$. Moreover $\langle a \rangle $ is a maximal cyclic subgroup in $G$ (otherwise $a=c'^d$ for some $d\in\N_{\geq 2}$ and some $c'\in G$ of order strictly bigger that $\Ord(a)$, which implies that $p\mid d$, contradicting the fact that $a$ is not a $p$-th power), hence the result.

Conversely, suppose that $G$ has two maximal cyclic subgroups $\langle a \rangle $ and $\langle c \rangle $ with $\Ord(c) = p^m$ and $\Ord(a) = p^n$ for integers $1\leq n < m$.
The element $b=c^{p^{m-n}}$ has order $p^n$ and $b$ is the $p$-th power of $c^{p^{m-n-1}}$. By maximality of $\langle a\rangle$, the element $a$ is not a $p$-th power. We conclude by applying Proposition~\ref{Psimp}.

\end{proof}

As an illustration of families of groups satisfying Property $\mathcal P$, we recall the definition of a $Q$-group (a generalization of quaternion groups; see~\cite{BN}).
\begin{defi}
A finite $2$-group is a \emph{$Q$-group} if:
\begin{enumerate}
    \item $G$ has an abelian subgroup $A$ of index $2$ that has an element of order $>2$,
    \item $G=\langle A,x \rangle$ where $x\in G$ is of order $4$ and satisfies $xax^{-1}=a^{-1}$ for all $a \in A$.
\end{enumerate}
\end{defi}

For such a class of groups, we draw the following consequence of Proposition~\ref{Caractérisation}.
\begin{cor}\label{q-gr}
    Any $Q$-group that has an element of order $8$ satisfies Property $\mathcal{P}$.
\end{cor}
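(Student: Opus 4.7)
The plan is to invoke Proposition~\ref{Caractérisation}: since $G$ is a $2$-group, it suffices to produce two maximal cyclic subgroups of $G$ of different orders. I will exhibit one maximal cyclic subgroup of order $4$ (namely $\langle x\rangle$) and another of order at least $8$ (coming from $A$).

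The first step is to analyze elements of $G\setminus A$. Writing any such element as $ax$ with $a\in A$ and using the relation $xax^{-1}=a^{-1}$, one computes
\[
(ax)^2=a\,(xax^{-1})\,x^2=a\cdot a^{-1}\cdot x^2=x^2\,.
\]
Since $x$ has order $4$, this shows that $(ax)^2$ has order $2$, hence $ax$ has order exactly $4$ for every $a\in A$. In particular, every element of $G$ of order at least $8$ must lie in $A$.

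Thanks to this observation, the element of order $8$ provided by the hypothesis lies in $A$. Pick $y\in A$ of maximal order; then $\Ord(y)\geq 8$, and $\langle y\rangle$ is maximal cyclic in $A$ since $A$ is abelian and $y$ is of maximal order. I claim $\langle y\rangle$ is moreover maximal cyclic in $G$: any cyclic overgroup $\langle z\rangle\supsetneq\langle y\rangle$ would satisfy $\Ord(z)>\Ord(y)\geq 8$, forcing $z\in A$ by the first step, which contradicts the maximality of $\Ord(y)$ in $A$.

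Finally, I show that $\langle x\rangle$ is maximal cyclic in $G$. If $\langle x\rangle\subsetneq \langle z\rangle$ for some $z\in G$, then $x\in \langle z\rangle$ forces $z\notin A$ (otherwise $\langle z\rangle\subset A$ would not contain $x$). But then $z=ax$ for some $a\in A$, and the computation of the first step gives $\Ord(z)=4=\Ord(x)$, contradicting strict inclusion. Hence $\langle x\rangle$ and $\langle y\rangle$ are maximal cyclic subgroups of $G$ of respective orders $4$ and $\geq 8$, and Proposition~\ref{Caractérisation} yields the conclusion. There is no serious obstacle here; the only mildly delicate point is recognizing that the inversion action of $x$ on $A$ collapses the orders of elements of $G\setminus A$, which is what forces the maximal cyclic subgroup $\langle x\rangle$ to have order strictly less than that of any maximal cyclic subgroup coming from $A$.
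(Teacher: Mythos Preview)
Your proof is correct and follows essentially the same strategy as the paper's: both arguments use Proposition~\ref{Caractérisation} by showing that $\langle x\rangle$ is a maximal cyclic subgroup of order $4$, and then invoking the element of order $8$ to obtain a second maximal cyclic subgroup of strictly larger order. The only cosmetic difference is that the paper argues that every square in $G$ lies in $A$ (via the quotient $G/A$), hence $x$ is not a square and $\langle x\rangle$ is maximal, whereas you compute $(ax)^2=x^2$ directly to see that all elements of $G\setminus A$ have order $4$; these are two phrasings of the same observation.
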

\begin{proof}
 Let $G$ be a group satisfying the assumption, let $A$ be a subgroup of $G$ of index $2$, and let $x\in G$ be of order $4$ such that $G=\langle A, x  \rangle $. Note that $x\notin A$. Every square in $G$ represents the trivial class in $G/A$, hence every square in $G$ belongs to $A$. In particular, $x$ is not a square. Thus, $\langle x \rangle $ is a maximal cyclic subgroup (by the same argument showing that $\langle a \rangle$ is maximal in the first part of the proof of Proposition~\ref{Caractérisation}) and it has order $4$. Since $G$ has an element of order $8$, then $G$ has two maximal cyclic subgroups of distinct orders. We conclude thanks to Proposition~\ref{Caractérisation}.

\end{proof}

The notion of $Q$-group was introduced by Blackburn in order to prove the following theorem (see~\cite{BN}*{Theorem 1}). Recall that a Dedekind group is a group such that every subgroup is normal. For a characterization of Dedekind groups one can see~\cite{Hall}*{Theorem 12.5.4}.
\begin{thm}[Blackburn]\label{BN}
    Let $G$ be a $p$-group that is not Dedekind and such that the intersection of all non-normal subgroups of $G$ is non-trivial. Then $p=2$ and one of the following cases occurs:
    \begin{enumerate}
        \item $G$ is the direct product of the quaternion group of order $8$, a cyclic group of order $4$, and an (abelian) elementary $2$-group (i.e., a group in which every element is of order $2$).
        \item $G$ is the direct product of two quaternion groups of order $8$ and an (abelian) elementary $2$-group.
        \item $G$ is a $Q$-group.
    \end{enumerate}
\end{thm}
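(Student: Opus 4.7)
The plan is to study the characteristic subgroup $D:=\bigcap_{H\text{ non-normal}} H$, which is non-trivial by hypothesis and invariant under $\mathrm{Aut}(G)$. The central dichotomy is the following: a subgroup $H\leq G$ either contains $D$, or $H$ itself is normal (otherwise $H$ would appear in the intersection defining $D$). Consequently, every cyclic subgroup of $G$ not meeting $D$ outside $\{1\}$ is normal in $G$. I would first extract structural information about $D$ itself: pick a minimal non-trivial subgroup $D_0\leq D$ of order $p$; since $D_0\subseteq D\subseteq H$ for every non-normal $H$, and since $D_0$ is characteristic (as it is forced to lie inside every non-normal subgroup, in particular inside every conjugate), one shows that $D_0\leq \mathcal Z(G)$ and that $D_0$ is unique, so $D$ is cyclic with $D_0$ its socle.

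Next I would prove $p=2$. The idea is that when $p$ is odd, any non-normal cyclic subgroup of order $p$ would, by the dichotomy, contain $D_0$, hence equal $D_0$, which is central — a contradiction. Thus for odd $p$ all subgroups of order $p$ are normal. A standard result (Huppert) then forces $G$ to be regular and, combined with the non-Dedekind hypothesis, produces a non-normal cyclic subgroup of order $p^2$; unravelling the dichotomy with $D_0$ central yields a contradiction, so $p=2$.

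For $p=2$, I would run an induction on $|G|$, exploiting the classification of Dedekind $2$-groups (abelian, or $Q_8\times E$ with $E$ elementary abelian). Look at a non-normal subgroup $H$; by the dichotomy $D\leq H$, and $H/D_0$ sits inside $G/D_0$, whose non-normal-intersection is controlled by $D/D_0$. Case (1) versus (2) is governed by whether $G/D_0$ is abelian or already contains a quaternion factor, while case (3) (the $Q$-group case) arises precisely when $G$ has an element acting on an abelian index-$2$ subgroup by inversion — detected by studying the action of $G$ on the cyclic group $D$ and on maximal abelian subgroups of index~$2$. Concretely, one splits according to whether $\exp(G)=4$ (producing cases (1) and (2) after identifying the abelian elementary direct factor via Fitting's lemma applied to $\mathcal Z(G)$) or $\exp(G)\geq 8$ (producing case (3), once one verifies that the non-central element of order $\geq 4$ acts by inversion on an index-$2$ abelian subgroup).

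The main obstacle is the final case distinction between (1), (2), and (3): one must carefully track the interaction between $D$, $\mathcal Z(G)$, the commutator subgroup $[G,G]$, and maximal cyclic subgroups. In particular, showing that in cases (1) and (2) the elementary abelian direct factor actually splits off requires producing a complement to the quaternion (or quaternion$\times$cyclic) component, which rests on the fact that $\mathcal Z(G)$ is elementary abelian modulo $D$ in these cases — a delicate commutator computation. Distinguishing $Q_8\times \Z/4\Z$ from $Q_8\times Q_8$ (both in exponent~$4$) is controlled by whether $G/\mathcal Z(G)$ is cyclic of order $2$ or a Klein four group at the relevant quotient.
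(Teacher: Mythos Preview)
The paper does not prove this statement: it is quoted verbatim as Blackburn's theorem \cite{BN}*{Theorem 1} and used as a black box (in the proof of Theorem~\ref{Ord}). There is therefore no argument in the paper to compare your proposal against.

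Regarding your outline on its own merits: the opening moves are correct---setting $D$ to be the intersection of all non-normal subgroups, observing that every subgroup either contains $D$ or is normal, and extracting a unique central subgroup $D_0$ of order $p$ inside $D$---are indeed how Blackburn begins. But from that point your sketch has real gaps. For odd $p$, the step ``all subgroups of order $p$ are normal, hence (Huppert) $G$ is regular'' is not a standard implication; Blackburn's actual exclusion of odd $p$ proceeds differently and is more delicate. For $p=2$, your inductive scheme via $G/D_0$ is problematic because the hypothesis need not pass to the quotient (the intersection of non-normal subgroups of $G/D_0$ can very well be trivial), so the induction does not get off the ground as stated. Finally, the trichotomy you propose---splitting on $\exp(G)=4$ versus $\exp(G)\geq 8$ and invoking ``Fitting's lemma on $\mathcal Z(G)$'' to split off an elementary abelian factor---does not match any recognizable argument; producing the direct-product decompositions in cases~(1) and~(2) is in fact the hardest part of Blackburn's proof and requires a detailed analysis of the centralizers of involutions and of the maximal abelian subgroups, well beyond what you indicate. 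If you want to supply a proof rather than a citation, you should consult Blackburn's original paper directly.
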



Our next goal is to prove Theorem~\ref{Ord} below. For this matter Blackburn's Theorem will be useful. Theorem~\ref{Ord} provides simple conditions for a $p$-group to satisfy $\mathcal P(p)$.

\begin{thm}\label{Ord}
    Let $G$ be a non-abelian $p$-group, where $p$ is a prime.
    \begin{enumerate} 
    \item Assume that $|G|=p^{m+n}$ for integers $1\leq n <m$ and that $G$ is not isomorphic to the quaternion group of order $8$. Suppose further that there exists $y\in G$ of order $p^m$. Then $G$ satisfies $\mathcal P(p)$.
    \item  Assume that $G$ contains an element of order at least $\max(p^2,8)$.  Then one of the following cases occurs:
    \begin{enumerate}
        \item $G$ satisfies property $\mathcal{P}(p)$.
        \item If $\alpha \in \mathcal{Z}(G)$ is an element of order $p$ then $G/\langle \alpha \rangle$ satisfies property $\mathcal{P}(p)$.
    \end{enumerate}
    Consequently, the group $\overline{G}:=G\times (G/\langle \alpha \rangle)$ satisfies property $\mathcal{P}(p)$.
    \end{enumerate}
    \end{thm}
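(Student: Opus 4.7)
The proof proceeds via Proposition~\ref{Caractérisation}: a $p$-group satisfies $\mathcal P(p)$ if and only if it contains two maximal cyclic subgroups of distinct orders. Equivalently, denoting the exponent of the group by $p^r$, some element of order strictly less than $p^r$ must fail to be a $p$-th power (since in a $p$-group, $\langle g\rangle$ is maximal cyclic iff $g\notin G^p$). The whole proof is organised around producing, or failing to produce, such a ``short'' non-$p$-th-power element.

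For Part~(1), let $p^r$ be the exponent of $G$, so $r\geq m$ by the hypothesis, and fix $y_0\in G$ with $\Ord(y_0)=p^r$; then $\langle y_0\rangle$ is maximal cyclic of order $p^r$. I plan to argue by contradiction, supposing every maximal cyclic subgroup of $G$ has order $p^r$, so that every element of order $\leq p^{r-1}$ is a $p$-th power. The crucial geometric observation is that $\langle y_0\rangle$ is large: its index is $p^{m+n-r}\leq p^n<p^m\leq p^r=|\langle y_0\rangle|$. Since the conjugation action embeds $G/C_G(y_0)\hookrightarrow \mathrm{Aut}(\Z/p^r\Z)$, whose $p$-part has order at most $p^{r-1}$, the centralizer $C_G(y_0)$ is a large abelian subgroup of $G$ containing $\langle y_0\rangle$. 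When $n=1$, one resorts to the classical classification of $p$-groups with a cyclic subgroup of index $p$ (the modular maximal-cyclic group for odd $p$; the dihedral, generalized quaternion, semi-dihedral, and modular maximal-cyclic groups for $p=2$) and verifies directly that among these the only one with all maximal cyclic subgroups of the same order is $Q_8$, contradicting the hypothesis. For $n\geq 2$, I plan to proceed by induction on $n$: passing to a maximal subgroup $H\leq G$ containing $\langle y_0\rangle$, the triple $(H,m,n-1)$ satisfies the hypotheses of Part~(1) provided $H$ is non-abelian, and a maximal cyclic subgroup of $H$ of order $<p^r$ remains maximal cyclic in $G$; otherwise, every such $H$ is abelian, imposing a very rigid structure on $G$ (e.g.\ $G/\mathcal Z(G)$ elementary abelian) that is handled by a direct argument.

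For Part~(2), the hypothesis that $G$ contains an element of order at least $\max(p^2,8)$ gives that the exponent $p^r$ satisfies $r\geq 2$, and $r\geq 3$ when $p=2$. If $G$ satisfies $\mathcal P(p)$ we are in case~(a); otherwise by Proposition~\ref{Caractérisation} all maximal cyclic subgroups of $G$ have order exactly $p^r$ and every element of order $p^{r-1}$ is a $p$-th power. Fix $\alpha\in\mathcal Z(G)$ of order $p$ and set $\bar G:=G/\langle\alpha\rangle$. For $y\in G$ of order $p^r$, $\bar y$ has order $p^r$ if $\alpha\notin\langle y\rangle$ and $p^{r-1}$ otherwise. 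Since $G$ is non-abelian there are several distinct cyclic subgroups of $G$ of order $p^r$: either one finds two of them, one containing $\alpha$ and one not, whose images produce in $\bar G$ maximal cyclic subgroups of distinct orders $p^r$ and $p^{r-1}$; or every cyclic subgroup of order $p^r$ contains $\alpha$. In the latter sub-case $\alpha$ is forced to be the unique order-$p$ element in each such cyclic subgroup, a rigidity constraint under which one still produces the second maximal cyclic in $\bar G$ by considering images of well-chosen elements of order $p^{r-1}$ that cease to be $p$-th powers modulo $\langle\alpha\rangle$. The ``consequently'' clause is immediate from Lemma~\ref{Direct-product}: either $G$ or $G/\langle\alpha\rangle$ satisfies $\mathcal P(p)$, hence so does $\overline G$.

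The main obstacle in both parts is the fine structural analysis of non-abelian $p$-groups in which \emph{all} maximal cyclic subgroups share a common order. Apart from $Q_8$ such groups are very constrained, but ruling them out precisely requires either an induction on $n$ (Part~(1)) or a careful case analysis when $\alpha$ lies in every top-order maximal cyclic subgroup (Part~(2)). The combinatorial content of these arguments will form the bulk of the proof.
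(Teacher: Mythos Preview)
Your overall strategy via Proposition~\ref{Caractérisation} is sound, but the route you take diverges from the paper's and contains a genuine gap.

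In Part~(1), the inductive step does not go through as stated. You claim that ``a maximal cyclic subgroup of $H$ of order $<p^r$ remains maximal cyclic in $G$'', but this is unjustified: if $\langle a\rangle$ is maximal cyclic in the index-$p$ normal subgroup $H$, nothing prevents the existence of $g\in G\setminus H$ with $g^p=a$, so $a$ can become a $p$-th power in $G$. Without this claim the induction does not transfer $\mathcal P(p)$ from $H$ to $G$. The residual sub-case ``every maximal $H\supseteq\langle y_0\rangle$ is abelian'' is likewise left as a hand-wave (``very rigid structure\ldots handled by a direct argument''), and the base case $n=1$ already requires the full classification of $p$-groups with a cyclic subgroup of index $p$, which is heavier machinery than needed.

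The paper's argument for Part~(1) is much shorter and sidesteps induction entirely. The dichotomy is whether some non-trivial $z\in G$ satisfies $\langle z\rangle\cap\langle y\rangle=\{1\}$. If so, Lemma~\ref{Imp} lets one enlarge $\langle z\rangle$ to a maximal cyclic $\langle x\rangle$ still disjoint from $\langle y\rangle$, and the single inequality
\[
p^m\cdot|\langle x\rangle|=|\langle x\rangle\langle y\rangle|\leq |G|=p^{m+n}
\]
forces $|\langle x\rangle|\leq p^n<p^m$, yielding two maximal cyclic subgroups of distinct orders at once. If not, the element $\alpha=y^{p^{m-1}}$ lies in \emph{every} non-trivial subgroup of $G$, and Blackburn's Theorem~\ref{BN} (on $p$-groups whose non-normal subgroups have non-trivial intersection) reduces to the $Q$-group case handled by Corollary~\ref{q-gr}.

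For Part~(2) the paper runs the same dichotomy relative to a maximal cyclic $\langle y\rangle$ containing $\alpha$. Your alternative dichotomy (whether every top-order cyclic contains $\alpha$) is, under your standing hypothesis that $G$ fails $\mathcal P(p)$, actually equivalent to the paper's---but you neither observe this nor give any argument in the ``all contain $\alpha$'' sub-case beyond ``well-chosen elements''. In the paper this sub-case is again dispatched by Blackburn (it lands in alternative~(a), not~(b)). The sub-case you do sketch (some $\langle x\rangle$ avoids $\alpha$, and $\bar x$, $\bar y$ give maximal cyclics of different orders in $G/\langle\alpha\rangle$) matches the paper's case~(i).
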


Before giving the proof, we briefly sketch the strategy. Theorem~\ref{Ord}(1) states that when a $p$-group $G$ has an element $y$ with large order, then $G$ satisfies $\mathcal{P}(p)$. Thanks to Proposition \ref{Caractérisation}, a $p$-group satisfies $\mathcal{P}$ as soon as there exist two maximal cyclic subgroups of distinct orders. To prove Theorem \ref{Ord} we will choose an element $a$ in $G$ such that $\langle a \rangle \cap \langle y \rangle $ is as small as possible and note that since $y$ has ``large'' order, then this implies that $a$ should have a ``small'' order, otherwise $|\langle a,y \rangle |\geq |\langle a \rangle.\langle y \rangle | $ would be larger than $|G|$. To prove the existence of such an element $a$, we will combine Lemma \ref{Imp} with Blackburn's Theorem~\ref{BN}.

\medskip

To begin with, we state the following lemma (see e.g.~\cite{Berkovich}*{Introduction, exercise 10}) that will be used several times and in particular in the proof of Theorem~\ref{Ord}. 
\begin{lem}\label{card}
Let $G$ be a group, and $H,K$ be two finite subgroups. Then, denoting $HK=\{hk\colon h\in H\text{ and } k\in K\}$, we have
$$|HK|=\frac{|H|\cdot|K|}{|H \cap K|}\,.$$
\end{lem}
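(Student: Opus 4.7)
The plan is to count $|HK|$ by fibering the natural multiplication map $\phi\colon H\times K \to HK$, $(h,k)\mapsto hk$. This map is surjective by the definition of $HK$, so once I show that every fiber has cardinality exactly $|H\cap K|$, the equality $|H|\cdot|K| = |H\times K| = |HK|\cdot|H\cap K|$ will immediately yield the desired formula.

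The key step is to establish, for each fixed $g\in HK$, a bijection between $\phi^{-1}(g)$ and $H\cap K$. To this end, I would fix one decomposition $g = h_0 k_0$ with $h_0\in H$ and $k_0\in K$, and verify that the pairs $(h,k)\in H\times K$ with $hk = h_0 k_0$ are precisely those of the form $(h_0 d,\, d^{-1}k_0)$ with $d\in H\cap K$. The inclusion $\supseteq$ is a one-line computation. For the reverse inclusion, the relation $hk = h_0 k_0$ rewrites as
\[
h_0^{-1}h \;=\; k_0 k^{-1},
\]
and the common value of the two sides lies in $H$ (by the left-hand side) and in $K$ (by the right-hand side), hence in $H\cap K$; calling it $d$ recovers the parameterization. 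It is then immediate that $d\mapsto (h_0 d, d^{-1}k_0)$ is a bijection onto $\phi^{-1}(g)$, since the inverse is $(h,k)\mapsto h_0^{-1}h$.

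Since this is a routine fiber-counting argument, there is no genuine obstacle; the only care required is to check that the parameterization of a fiber by $H\cap K$ is bijective, which is what the two directions above verify. In particular one sees that the cardinality of $\phi^{-1}(g)$ is independent of the chosen base point $(h_0,k_0)$ and equals $|H\cap K|$, which finishes the proof.
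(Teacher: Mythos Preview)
Your argument is correct: the fiber-counting for the surjection $\phi\colon H\times K\to HK$, with each fiber parametrized bijectively by $H\cap K$, is the standard proof of this identity and is carried out cleanly. The paper itself does not give a proof of this lemma but simply states it with a reference (\cite{Berkovich}*{Introduction, exercise 10}), so there is nothing further to compare.
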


\begin{proof}[Proof of Theorem~\ref{Ord}]
(1) Note that $|G|\geq 8$. We consider two cases separately.

First assume $|G|=8$. Since $G$ is non-abelian and is not the quaternion group. We conclude that $G$ is isomorphic to the dihedral group $D_8$ of order $8$ which satisfies $\mathcal{P}$ by Corollary~\ref{Example2}.

Next assume $|G|>8$.  Two cases can then occur:
\begin{enumerate}
    \item[(i)] There exists $z \in G$ non-trivial such that $\langle y \rangle \cap \langle z \rangle = \{1\}$. 
    
    Let $x \in G$ such that $\langle x \rangle$ is a maximal cyclic subgroup of $G$ containing $z$. Denote $p^r=\Ord(x)$. According to Lemma \ref{Imp} (applied for $D=\langle y\rangle$), we have $\langle x \rangle \cap \langle y \rangle =\{1\}$. Then $|\langle x \rangle \langle y \rangle |=p^{m+r}\leq p^{m+n}$, implying $r\leq n < m$, and therefore $G$ satisfies $\mathcal{P}(p)$, by Proposition \ref{Caractérisation},.
    
    \item[(ii)] For every non-trivial $z\in G$, we have $\langle y \rangle \cap \langle z \rangle \ne \{1\}$.
    
    Denote $\alpha=y^{p^{m-1}}$. Applying Lemma~\ref{Imp}, we deduce that for every non-trivial $z\in G$, we have $\alpha \in \langle z \rangle$. In particular the intersection of the non-trivial subgroups of $G$ is non-trivial (since it contains $\alpha$). 
    
    Assume that $p$ is odd. Then $G$ is not Dedekind, since Dedekind $p$-groups (for odd $p$) are abelian (see~\cite{Hall}*{Theorem 12.5.4}), and we get a contradiction by combining Theorem~\ref{BN} and what we have just proved. We deduce that $p=2$. Consequently $G$ is not Dedekind: indeed $m>2$ (by the assumptions $1\leq n<m$ and $|G|=p^{n+m}>8$) and therefore $\langle y\rangle$ has a subgroup of order $8$. We conclude by invoking \cite{Hall}*{Theorem 12.5.4} implying that a non-abelian Dedekind group cannot contain an element of order $8$. 
    
   Theorem \ref{BN} combined with the fact that $G$ has an element of order $8$ implies that $G$ is a $Q$-group. Invoking Corollary \ref{q-gr}, the proof is complete. 
\end{enumerate}

(2) Since $G$ is a non-abelian group (containing an element of order $8$ when $p=2$), then $G$ is not a Dedekind group (again by \cite{Hall}*{Theorem 12.5.4}). Let $\alpha\in\mathcal Z(G)$ be  an element of order $p$ and let $ \langle y \rangle$ be a maximal cyclic subgroup of order $p^m$ such that $y^{p^{m-1}}=\alpha$ (note that any maximal cyclic subgroup of $G$ that contains $\alpha$ is generated by such an element $y$). If $m=1$ or if $p=m=2$, then according to Proposition~\ref{Caractérisation} $G$ satisfies $\mathcal{P}(p)$. Otherwise, two cases arise:
\begin{enumerate}
    \item[(i)] Suppose there exists $x \in G \setminus \{1\}$ such that $\langle x \rangle \cap \langle y \rangle = \{1\}$. Up to replacing $\langle x\rangle$ by a maximal cyclic subgroup containing $x$ and intersecting $\langle y \rangle$ trivially (by Lemma \ref{Imp}), we can assume that $\langle x \rangle$ is a maximal cyclic subgroup. Denote $\Ord(x)=p^n$. Proposition~\ref{Caractérisation} shows that if $n\ne m$, then $G$ satisfies $\mathcal{P}$. Suppose $n=m$. The class $\overline{x}$ of $x$ in $G/\langle \alpha \rangle$ has order $p^m=p^n$ (recall that $\langle \alpha\rangle\cap\langle x \rangle=\{1\}$). It is sufficient to show that $\overline{y}$ generates a maximal cyclic subgroup in $G/\langle \alpha \rangle$. Indeed this implies that $\langle \overline{y}\rangle$ and $\langle \overline{x}\rangle$ are maximal subgroups of distinct orders in $G/\langle \alpha\rangle$ and we conclude by applying Proposition~\ref{Caractérisation}. 
    
    Assume by contradiction that there exists $z \in G$ such that $\overline{z}^p= \overline{y}$. We then have $z^py^{-1} \in \{\alpha^k\colon 0\leq k < p\}$. Every $y\alpha^k=y^{kp^{m-1}+1}$ with $0\leq k<p$, generates $\langle y \rangle$ (since the exponent $kp^{m-1}+1$ is coprime to $p$). Thus $\langle z^p\rangle$ is a maximal cyclic subgroup, 
    leading to the desired contradiction. 
    
    \item[(ii)] Suppose that for every non-trivial $x \in G$, $\langle x \rangle \cap \langle y \rangle \ne \{1\}$. According to Lemma \ref{Imp}, we have $\alpha \in \langle x\rangle$ for every $x \in G \setminus \{1\}$. Thus we can apply Theorem \ref{BN}. Since $G$ has an element of order $8$ we deduce that $G$ is a $Q$-group and in turn that $G$ satisfies $\mathcal{P}$ by Corollary \ref{q-gr}. The last part of the statement follows by Lemma~\ref{Direct-product}.
\end{enumerate}

\end{proof}

The abelian case is much easier. Thanks to the Structure Theorem of finite abelian groups, we can give a full characterization of abelian groups $G$ satisfying $\mathcal{P}(p)$ for $p$ prime.
\begin{thm}\label{Ab}
    Let $G$ be a finite abelian group and let $p$ be a prime number. Then $G$ satisfies $\mathcal{P}(p)$ if and only if its (unique) $p$-Sylow subgroup is not homocyclic.
\end{thm}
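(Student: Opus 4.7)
My plan is to use Proposition~\ref{Sylow}(1) to reduce immediately to the case where $G$ itself is a finite abelian $p$-group, and then invoke the Structure Theorem to write $G \cong \prod_{i=1}^k \Z/p^{n_i}\Z$ with $n_1 \leq n_2 \leq \cdots \leq n_k$. Recall that $G$ is homocyclic exactly when all the $n_i$ coincide, so I want to show that $\mathcal P(p)$ fails precisely in that situation. Throughout, I will freely use Proposition~\ref{Psimp}, which in the $p$-group setting reduces $\mathcal P(p)$ to the existence of two elements $a,b$ of the same order such that $a$ has no $p$-th root while $b$ does.

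For the ``only if'' direction, I would argue that in a homocyclic group $(\Z/p^n\Z)^k$, an element $x=(x_1,\ldots,x_k)$ is a $p$-th power if and only if each $x_i$ is divisible by $p$ in $\Z/p^n\Z$, while $x$ has order $p^n$ if and only if some $x_i$ is a unit modulo $p$. These two conditions show that an element is a $p$-th power if and only if its order is strictly less than $p^n$. Consequently, any two elements of the same order either are both $p$-th powers or both fail to be, so Proposition~\ref{Psimp} rules out $\mathcal P(p)$. (Equivalently, one observes that every maximal cyclic subgroup of $(\Z/p^n\Z)^k$ has order exactly $p^n$, and then applies Proposition~\ref{Caractérisation}.)

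For the ``if'' direction, suppose $G$ is not homocyclic, so there exist indices $i<j$ with $n_i<n_j$. Letting $e_s$ denote the canonical generator of the $s$-th cyclic factor, I would take
\[
a=e_i\qquad\text{and}\qquad b=p^{n_j-n_i}e_j,
\]
both of order $p^{n_i}$. The element $a$ cannot be a $p$-th power: in additive notation a relation $pc=a$ would force $pc_i\equiv 1\pmod{p^{n_i}}$, which is impossible; on the other hand $b=p\cdot(p^{n_j-n_i-1}e_j)$ is visibly a $p$-th power. By Proposition~\ref{Psimp} the group $G$ satisfies $\mathcal P(p)$, completing the proof.

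No real obstacle is anticipated: the content is entirely combinatorial and the main work is the explicit construction of $a$ and $b$ in the non-homocyclic case, together with the clean characterisation of $p$-th powers and maximal orders in a homocyclic factor. The Sylow reduction of Proposition~\ref{Sylow}(1) and the criterion of Proposition~\ref{Psimp} (or Proposition~\ref{Caractérisation}) do all the heavy lifting.
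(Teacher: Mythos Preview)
Your proof is correct and follows essentially the same approach as the paper: both reduce to abelian $p$-groups via Proposition~\ref{Sylow}(1) and then exploit the Structure Theorem, with your explicit choice of $a=e_i$ and $b=p^{n_j-n_i}e_j$ being exactly the construction hidden behind the paper's appeal to Proposition~\ref{Caractérisation}. The only minor difference is that for the homocyclic direction you argue directly via Proposition~\ref{Psimp} (characterising $p$-th powers by order), whereas the paper goes through Proposition~\ref{Caractérisation} and the observation that maximal cyclic subgroups force direct factors of the same order; your route is slightly more elementary but amounts to the same idea.
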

\begin{proof}
By Proposition~\ref{Sylow}(1), it is enough to prove the theorem when $G$ is an abelian $p$-group. If $G$ is not homocyclic, then $G$ has two cyclic direct factors of distinct orders. Since these direct factors are maximal cyclic subgroups, then, by Proposition~\ref{Caractérisation} $G$ satisfies $\mathcal{P}(p)$.\\
Conversely, if $G$ satisfies $\mathcal{P}(p)$, then by Proposition \ref{Caractérisation} $G$ has two maximal cyclic subgroups with distinct orders. We use the well-known fact that if $\langle h \rangle$ is a maximal cyclic subgroup of $G$, then $G$ has a direct factor of order $\Ord(h)$ (otherwise, applying the Structure Theorem for finite abelian groups, $h$ would have a $p$-th root which would have order $p \cdot \Ord(h)$ by Lemma~\ref{1} contradicting the maximality of $\langle h \rangle$). Thus, $G$ has two cyclic direct factors with distinct orders which means that $G$ is not homocyclic.

\end{proof}

\begin{proof}[Proof of Theorem~\ref{Bias-Ab-gr}.]
Assume $L/K$ does not have any extreme Chebyshev bias relatively to any conjugacy classes $C_1,C_2$ in $G:=\Gal(L/k)$). Then by Proposition~\ref{prop:PimpliesBias}, $G$ does not satisfy $\mathcal{P}(p)$ for any prime $p$. In particular, Theorem~\ref{Ab} implies that for every $p$, the $p$-Sylow subgroup of $G$ is homocyclic.

Conversely assume that for every $p$, the $p$-Sylow subgroup of $G$ is homocyclic. Then, by Theorem \ref{Ab}, $G$ does not satisfy $\mathcal{P}(p)$ for any prime $p$ dividing $|G|$. For such a $p$, since $f_p\colon G\rightarrow G$ (defined by $f_p(x)=x^p$) is a group homomorphism, every $p$-th power in $G$ has the same number of $p$-th roots. So that, every two elements of the same order in $G$ have the same number of $p$-th roots (otherwise $G$ would satisfy $\mathcal{P}(p)$). \\
Let $a,b \in G$ be two elements of the same order. Let us show that for every square-free $d\geq 2$, we have $r_d(a)=r_d(b)$. Since this equality is true for primes, it is sufficient to prove that the map $m\mapsto r_m(a)$ is multiplicative, that is for coprime $m,n\geq 1$ we have : $$r_{mn}(a)=r_{m}(a)r_n(a)$$
Denote for $k\geq1$: $$\Gamma_k(a):=\{x\in G\ :\ x^k =a \}$$
By Bézout's theorem, there exists $u,v \in \Z$ such that $um+vn=1$.\\
Let's prove that, the map $\Gamma_{mn}(a)\rightarrow \Gamma_m(a)\times \Gamma_n(a)$ given by $x\rightarrow (x^n,x^m)$ is bijective. The injectivity follows directly from the equality $x^{um}.x^{vn}=x$ which is true for all $x\in G$. While, if $(y,z)\in \Gamma_m(a)\times \Gamma_n(a)$. Denote $x=y^vz^u \in G$, the commutativity of $G$ shows that $$x^{mn}=(y^m)^{nv}(z^n)^{um}=a^{vn}a^{um}=a$$
Thus $x\in \Gamma_{mn}(a)$. Moreover, $x^n=y^{nv}a^u=y^{nv}y^{mu}=y$ and samewise $x^m=z$. This proves that for all $d\geq 2$ square-free we have $r_d(a)=r_d(b)$, hence by Theorem~\ref{OC-thm}(1) we have $$\pi(x;L/K;\{a\})=\pi(x;L/K;\{b\})\,.$$ Thus there is no Chebyshev bias with respect to $\{a\}, \{b\}$.

\end{proof}

\section{Extensions containing subextensions with extreme Chebyshev bias}\label{section:P+}

So far we have been focusing on Cayley embeddings (combined with Property $\mathcal P(d)$ studied in \S\ref{Property_P}; recall Definition~\ref{P}) to uncover a variety of cases where extreme Chebyshev biases can be obtained in Galois extensions of number fields.

In this section our goal is to relax the constraint on the index $(G^+:G)$ that is as large as $(|G|-1)!$ in the case of the Cayley embedding and obtain other examples of extensions $L/K$ of number fields with extreme Chebyshev biases. To do so, we introduce the following property which can be seen as a version of $\mathcal P(d)$ on the level of the group $G^+$.

\begin{defi}[Property $\mathcal P^+(p)$]\label{P+}
Let $G^+$ be a group, let $a,b\in G^+$, and let $p$ be a prime number. One says that $G^+$ satisfies $\mathcal{P^+}(p)$ if there exist conjugate elements $a,b\in G^+$ and a subgroup $G \subset G^+$ containing $a,b$ such that $G$ satisfies $\mathcal{P}(p)$ relatively to $(a,b)$.
\end{defi}

As in the case of the property $\mathcal{P}$, to simplify notations when $p=2$, we will simply write ``$G^+$ satisfies $\mathcal P^+$''. 


The following statement makes precise the way in which one can use $\mathcal P^+(p)$ to obtain extreme Chebyshev biases.

\begin{prop}\label{P+simp}
    Let $G^+$ be a group satisfying property $\mathcal{P^+}(p)$ relatively to $(a,b)$ and $G\subset G^+$ a subgroup containing $a,b$ such that $G$ satisfies $\mathcal{P}(p)$ relatively to $(a,b)$. Let $L/k $ be a Galois extension of number fields with group $G^+$, and denote $K=L^G$. Then, if $G^+$ is a $p$-group or if $p=2$, there exists $A>0$ such that for all $x\geq A$,
    \[
    \frac{|G|}{|C_a|}\pi(x;L/K;C_a)>\frac{|G|}{|C_b|}\pi(x;L/K;C_b)\,.
    \] 
\end{prop}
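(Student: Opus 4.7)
The plan is to apply Proposition~\ref{gen}(1) with $d=p$ to the class function $t := t_{C_a, C_b}$ on $G$. Since $\pi(x;L/K;t) = \tfrac{|G|}{|C_a|}\pi(x;L/K;C_a) - \tfrac{|G|}{|C_b|}\pi(x;L/K;C_b)$, the desired inequality is equivalent to the eventual positivity of $\pi(x;L/K;t)$, and Proposition~\ref{gen}(1) will output the asymptotic
\[
\pi(x;L/K;t) = \mu(p)\,\langle t, r_p\rangle\, \frac{x^{1/p}}{\log x} + o\!\left(\frac{x^{1/p}}{\log x}\right).
\]

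A direct computation gives $\langle t, r_p\rangle = r_p(a) - r_p(b)$, and Property $\mathcal{P}(p)$ forces $r_p(a)=0$ and $r_p(b)>0$, so $\mu(p)\langle t, r_p\rangle = r_p(b) > 0$. This settles both the nonvanishing required to apply Proposition~\ref{gen}(1) and the correct sign at the end.

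The remaining task is to check $(t\circ f_\ell)^+=0$ for every square-free $1 \leq \ell < p$. For $\ell=1$ I would use that $a$ and $b$ are $G^+$-conjugate by assumption, so $C_a$ and $C_b$ lie in a common $G^+$-conjugacy class $C^+$; then \eqref{eq:indconj} applied to each of $\mathds 1_{C_a}, \mathds 1_{C_b}$ immediately gives $t^+=0$. When $p=2$ this is already everything, so the conclusion follows from Proposition~\ref{gen}(1). When $G^+$ is a $p$-group with $p\geq 3$ and $1<\ell<p$, I would exploit that $\ell$ is coprime to $|G^+|$, hence to $|G|$ and to $\mathrm{ord}(a)=\mathrm{ord}(b)$. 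Then $f_\ell$ is a conjugation-equivariant bijection of $G$, so it sends conjugacy classes to conjugacy classes; choosing $m$ with $\ell m \equiv 1 \pmod{\mathrm{ord}(a)}$ one checks that $f_\ell^{-1}(C_a) = C_{a^m}$ and $f_\ell^{-1}(C_b) = C_{b^m}$, both of the respective sizes $|C_a|$ and $|C_b|$. Consequently
\[
t\circ f_\ell = \frac{|G|}{|C_{a^m}|}\mathds 1_{C_{a^m}} - \frac{|G|}{|C_{b^m}|}\mathds 1_{C_{b^m}} = t_{C_{a^m},C_{b^m}}.
\]
Since $a$ and $b$ are $G^+$-conjugate, so are $a^m$ and $b^m$, hence $C_{a^m}$ and $C_{b^m}$ are contained in a single $G^+$-conjugacy class and a second application of \eqref{eq:indconj} yields $(t\circ f_\ell)^+ = 0$.

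The hypotheses of Proposition~\ref{gen}(1) are now met with $d=p$, giving $\pi(x;L/K;t) \sim r_p(b)\, x^{1/p}/\log x > 0$ for $x$ larger than some $A>0$, which is the desired inequality. The main subtle point is the verification in the $p$-group case that $f_\ell$ permutes conjugacy classes and that $t\circ f_\ell$ remains of the form $t_{C',C''}$ with $C',C''$ still $G^+$-conjugate; this is the step that genuinely uses the $p$-group assumption (to force $\gcd(\ell,|G|)=1$ for the relevant $\ell$), and it is what allows us to reduce to the elementary induction identity $t^+=0$ for $G^+$-conjugate classes.
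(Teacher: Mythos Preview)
Your proof is correct and follows essentially the same approach as the paper: apply Proposition~\ref{gen}(1) with $d=p$, using $t^+=0$ (from the $G^+$-conjugacy of $a$ and $b$) and the coprimality of $\ell$ with $|G^+|$ in the $p$-group case to handle $(t\circ f_\ell)^+$ for $1<\ell<p$. The only cosmetic difference is in this last step: the paper shows directly that $(t\circ f_\ell)^+=t^+\circ f_\ell$ via the induction formula (using $g\in G\iff g^\ell\in G$), whereas you identify $t\circ f_\ell=t_{C_{a^m},C_{b^m}}$ and reapply the $\ell=1$ argument; both routes rest on the same observation that $f_\ell$ is a bijection of $G$ when $\gcd(\ell,|G|)=1$.
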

\begin{proof}
We show that $t:=t_{C_a,C_b}$ satisfies the assumptions of Proposition \ref{gen} for $d=p$. Since $G$ satisfies property $\mathcal{P}(p)$ relatively to $(a,b)$, then $\langle t,r_p\rangle <0$. As $G^+$ satisfies $\mathcal{P^+}(p)$ relatively to $(a,b)$, we have $t^+=0$. It remains to show that for all $1< \ell < p$ square-free, $(t\circ f_\ell)^+=0$. If $p=2$, there is nothing to prove. If $G^+$ is a $p$-group for $p>2$, every $1<\ell<p$ is coprime with $p$. Therefore, the function $f_\ell:G^+\rightarrow G^+$ defined by $f_\ell(g)=g^\ell$ for $g\in G^+$ satisfies $f_\ell(G)=G$, and thus for all $g\in G^+$, we have $g\in G$ if and only if $f_\ell(g)\in G$. Then, for all $x\in G^+$:
\begin{equation}\label{eq:indnulle}
(t\circ f_\ell)^+(x)=\frac{1}{|G|}\underset{\substack{g\in G^+\\gxg^{-1}\in G}}{\sum}(t\circ f_\ell)(gxg^{-1})=\frac{1}{|G|}\underset{\substack{g\in G^+\\gx^\ell g^{-1}\in G}}{\sum}t(gx^\ell g^{-1})=t^+\circ f_\ell (x)=0\,.
\end{equation}

\end{proof}

\begin{rk}
   The assumption ``$p=2$ or $G^+$ is a $p$-group'' in Proposition~\ref{P+simp} cannot be relaxed in general. Indeed extreme Chebyshev biases are produced using Proposition~\ref{gen}, where crucially $(t\circ f_\ell)^+=0$ for all $1\leq \ell<p$. As in Remark~\ref{rem:converse}, let $i,j$ be two distinct elements of order $4$ generating the quaternion group $Q_8$. Let $C_{p^2}=\langle c \rangle$ be a cyclic group of order $p^2$, and let $C_p=\langle c_p \rangle$ be its subgroup of order $p$. Consider $G:=\left(\langle i \rangle \times C_p \right) \times \left (\langle j \rangle \times C_{p^2}\right)$, and let $a:=(-1,c_p,1,1)$ and $b:=(1,1,-1,c_p)$. The group $G$ satisfies $\mathcal{P}(p)$ relatively to $(a,b)$. One has the diagonal (natural) injection $G\hookrightarrow (Q_8\times C_{p^2}) \times (Q_8\times C_{p^2})$ which is itself a subgroup of $G^+:=\left((Q_8\times C_{p^2}) \times (Q_8\times C_{p^2})\right)\rtimes \Z/2\Z$, the wreath product of $Q_8\times C_{p^2}$ by $\Z/2\Z$. Denoting $t=\mathds{1}_{\{a\}}-\mathds{1}_{\{b\}}$, for all $1\leq \ell <p$ we have $\langle t, r_\ell \rangle_G=0$ and $t^+=0$. Thus the group $G^+$ satisfies $\mathcal{P}^+(p)$ relatively to $\left((a,\Bar{0}),(b,\Bar{0})\right)$. However $(t \circ f_2)^+\ne 0$, thus Proposition \ref{gen} does not yield an extreme Chebyshev bias in this case.
\end{rk}

As explained above, if $G$ satisfies $\mathcal{P}(p)$ relatively to $(a,b)$ then $\frak{S}(G)$ satisfies $\mathcal{P^+}(p)$ (\emph{via} the Cayley embedding $G\hookrightarrow \mathfrak S(G)$). We now address the question of the minimal order of a group $G^+$ satisfying $\mathcal{P}^+(p)$ when $p$ is a prime. We first note that if $G^+$ satisfies $\mathcal{P^+}(p)$ relatively to $(x,y)$ and $G$ is a subgroup of $G^+$ satisfying $\mathcal{P}(p)$ relatively to $(x,y)$, then $G$ is not normal in $G^+$ (otherwise an element of $G^+$ conjugating $x$ and $y$ would give rise to an automorphism $f$ of $G$ such that $f(y)=x$ which is impossible since, if $z$ is a $p$-th root of $y$, $x$ would have $f(z)$ as a  $p$-th root in $G$). So a first (trivial) answer to our question is that $|G^+|\geq 3|G|$ (since a subgroup of index $2$ is normal). As a corollary of Proposition~\ref{prop:p3} we have the following bound.

\begin{cor}\label{prop:3p3}
    Let $p$ be a prime and let \(G^+\) be a group satisfying $\mathcal{P^+}(p)$. Then, $|G^+| \geq 3p^3$.
\end{cor}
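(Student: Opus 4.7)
The plan is quite direct: combine the lower bound on $|G|$ given by Proposition~\ref{prop:p3} with the index bound $[G^+:G]\geq 3$ that is essentially already spelled out in the paragraph preceding the statement.

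First I would invoke Proposition~\ref{prop:p3}: since $G$ satisfies $\mathcal{P}(p)$ (as part of the definition of $\mathcal{P}^+(p)$ being satisfied by $G^+$ relatively to some pair $(a,b)$ with $G\subset G^+$ a subgroup containing $a$ and $b$), we immediately get $|G|\geq p^3$.

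Next I would justify that $[G^+:G]\geq 3$. The key point, already argued just before the statement of the corollary, is that $G$ cannot be normal in $G^+$. Indeed, if $G\lhd G^+$ then an element $g\in G^+$ realizing the conjugation $gag^{-1}=b$ would induce an automorphism $f$ of $G$ with $f(a)=b$; taking any $p$-th root $z\in G$ of $b$ (which exists by $\mathcal{P}(p)$), the element $f^{-1}(z)\in G$ would be a $p$-th root of $a$ in $G$, contradicting the fact that $a$ has no $p$-th root in $G$. Since any subgroup of index $2$ is normal, we must have $[G^+:G]\geq 3$.

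Combining both inequalities yields $|G^+|=[G^+:G]\cdot|G|\geq 3p^3$, as desired. There is no real obstacle here; the statement is a direct corollary, and the only ingredient beyond Proposition~\ref{prop:p3} is the standard observation that an index-$2$ subgroup is automatically normal, together with the non-normality argument already recorded in the paragraph preceding the corollary.
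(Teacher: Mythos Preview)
Your proposal is correct and follows exactly the approach of the paper: the paper does not give a separate proof of the corollary but simply combines the bound $|G|\geq p^3$ from Proposition~\ref{prop:p3} with the observation (recorded in the paragraph just before the corollary) that $G$ is not normal in $G^+$, hence $[G^+:G]\geq 3$.
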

When $G^+$ satisfies $\mathcal{P}^+(p)$ relatively to a group $G$ of the form described in Theorems \ref{Ord}(1) and \ref{Ab}, Propositions \ref{min-ab} and \ref{min-nonab} will give a more precise answer to this minimality question.


 Our next goal will be to give a complete characterization of $p$-groups $G^+$ satisfying $\mathcal{P}^+(p)$ (see Theorem~\ref{CarP+}) which, combined with Proposition~\ref{P+simp}, will lead to the proof of Theorem \ref{p_groupsBias}.

\subsection{Minimal order of $G^+$ satisfying $\mathcal P^+(p)$}

Let $G^+$ satisfy $\mathcal P^+(p)$ and let $G$ be the corresponding subgroup. We start with the case where $G$ is abelian. For convenience we will use the following multiplicative notation.

Fix $p$ a prime and let $(E_n)_{n \in \mathbb{N}}$ be an increasing sequence of multiplicative cyclic groups such that $E_n$ has order $p^n$. For each $n\in \mathbb{N}$, let $a_n$ be a generator of $E_n$ such that $a_{n+1}^p=a_n$ (one can think of $E_n$ as the group of $p^n$-th roots of unity in $\C$ and choose $a_n=\exp(2i\pi\cdot p^{-n})$).

Theorem \ref{Ab} shows that we can assume $G=E_n \times E_m \times H$, where $1\leq n < m$ and $H$ is a finite abelian group. The minimal possible index $(G^+:G)$ in this situation is $2p$ as we now show by proving first Proposition~\ref{G+ab} and next Proposition~\ref{min-ab}.

\begin{proof}[Proof of Proposition~\ref{G+ab}]

Let $\overline{G}=E_m\times E_m \times H$, and consider $f \in {\rm Aut}(\overline{G})$ defined by $f(x,y,z)=(y,x,z)$ for $(x,y,z) \in \overline{G}$. The order of $f$ in the group ${\rm Aut}(\overline{G})$ is $2$.

Let $G^+:=\overline{G} \rtimes \Z/2\Z$, where the semi-direct product is given by the morphism 
\[\varphi\colon \Z/2\Z\to {\rm Aut}(\overline{G})\,,\qquad \bar 1\mapsto f\,.
\]

We have an injective (canonical) homomorphism $\iota \colon G \rightarrow S $ by composing the inclusion $G \subset \overline{G}$ with the canonical homomorphism $\overline{G} \rightarrow \overline{G} \rtimes \Z/2\Z=G^+$.  

Taking $a=(a_n,1,1)\in G$ and $b=(1,a_n,1)\in G$, we observe that $a$ is not a square in $G$ and that $b$ is a square in $G$. Moreover, if we denote $t \in \Z/2\Z$ as the non-trivial element (\textit{i.e.} the class of $1\in \Z$), and $1_G$ as the identity element of $G$, then we have $(1_G,t)\iota (a)(1_G,t)^{-1}=\iota (b)$. This implies that $G^+$ satisfies property $\mathcal{P^+}(p)$ relatively to $(\iota(a),\iota(b))$.
 Noting that the inverse Galois problem for the solvable group $G^+$ is solved over any number field $k$, this finishes the proof of Proposition \ref{G+ab}.
 \end{proof}

As we now show, it turns out that the group $(E_m\times E_m \times H)\rtimes \Z/2\Z$ has minimal order among groups $G^+$ satisfying $\mathcal{P^+}(p)$ and containing $G=E_n \times E_m \times H$.

\begin{prop}\label{min-ab}
With notation as above, let $G=E_n \times E_m \times H$. Suppose $G^+$ is a finite group containing $G$ as a subgroup and satisfying $\mathcal{P^+}(p)$, then one has
$$ |G^+| \geq 2p^{2m}|H|=| (E_m \times E_m\times H) \rtimes \Z / 2 \Z  |\,.$$
\end{prop}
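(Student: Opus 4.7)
The plan is to exhibit a proper subgroup $N$ of the centralizer $C_{G^+}(a)$ of order at least $p^{2m}|H|$, so that together with $g\notin N$ (since $g$ does not centralize $a$) one obtains $|G^+|\geq 2|N|\geq 2p^{2m}|H|$. Fix a witness $(a,b)\in G\times G$ for $\mathcal P^+(p)$ and an element $g\in G^+$ with $gag^{-1}=b$. Since $a$ is not a $p$-th power in $G$ whereas $b$ is, one has $a\neq b$ and so $g\notin C_{G^+}(a)$; moreover $G\subseteq C_{G^+}(a)$ because $G$ is abelian.

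The heart of the argument is the construction of a $p^{m-n}$-th root of $a$ in $G^+$ of order exactly $p^m$. Using Theorem~\ref{Ab} together with Proposition~\ref{Caractérisation} applied to the non-homocyclic $p$-Sylow of $G$, I would reduce to the standard witness, namely the case where $a$ generates a maximal cyclic subgroup of order $p^n$ and $b=c_0^{p^{m-n}}$ for some $c_0\in G$ of order $p^m$ (a generator of a maximal cyclic subgroup of order $p^m$). Set $\tilde c := g^{-1}c_0 g\in G^+$; this element has order $p^m$ and satisfies $\tilde c^{\,p^{m-n}}=g^{-1}bg=a$. Since $a\in\langle\tilde c\rangle$, the element $\tilde c$ commutes with $a$, hence $\tilde c\in C_{G^+}(a)$.

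The crucial counting step is $\langle\tilde c\rangle\cap G=\langle a\rangle$, of order $p^n$: any power $\tilde c^{\,p^j}$ with $0\leq j<m-n$ lying in $G$ would furnish a $p^{m-n-j}$-th (hence a $p$-th) root of $a$ in $G$, contradicting $\mathcal P(p)$. The subgroup $N:=\langle G,\tilde c\rangle$ therefore lies in $C_{G^+}(a)$ and satisfies
\[
|N|\ \geq\ \frac{|G|\cdot|\langle\tilde c\rangle|}{|\langle\tilde c\rangle\cap G|}\ =\ \frac{p^{n+m}|H|\cdot p^m}{p^n}\ =\ p^{2m}|H|.
\]
Since $g\notin C_{G^+}(a)\supseteq N$, the subgroup $N$ is proper in $G^+$, so $[G^+:N]\geq 2$ and therefore $|G^+|\geq 2p^{2m}|H|$.

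The most delicate point is the reduction to the standard witness: a general $\mathcal P(p)$-witness $(a,b)$ for $G$ only guarantees that $b$ is a $p$-th power in $G$, not a $p^{m-n}$-th power, so the element $c_0$ with $c_0^{p^{m-n}}=b$ need not exist a priori. To handle this, one either argues that within the $G^+$-conjugacy class imposed by $\mathcal P^+(p)$ a standard witness can always be chosen, or replaces $c_0$ by a suitably selected $c\in G$ of order $p^m$ and tracks the intersection $\langle c\rangle\cap(G\cap gGg^{-1})$ to guarantee that the analogue of $\langle\tilde c\rangle\cap G$ remains of order at most $p^n$; this is where the structural hypothesis $G=E_n\times E_m\times H$ with $n<m$ enters in an essential way.
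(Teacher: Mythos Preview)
Your argument is correct and mirrors the paper's proof: both conjugate an order-$p^m$ generator (your $\tilde c=g^{-1}c_0g$, the paper's $x=tyt^{-1}$ with $y=(1,a_m,1)$) to produce a $p^{m-n}$-th root of $a$, bound the size of the subgroup it generates with (part of) $G$, and use the centralizer of $a$ to show this subgroup is proper. The only cosmetic difference is that the paper intersects $\langle x\rangle$ with $D=\{1\}\times E_m\times H$ and invokes Lemma~\ref{Imp} to get trivial intersection, whereas you intersect $\langle\tilde c\rangle$ with all of $G$ and use directly that $a$ has no $p$-th root to obtain $\langle\tilde c\rangle\cap G=\langle a\rangle$; the resulting count $p^{2m}|H|$ is the same. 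As for your ``delicate point'': the paper also tacitly works with the standard witness (its proof opens by choosing $t\in G^+$ with $t(1,a_n,1)t^{-1}=(a_n,1,1)$), so the reduction you worry about is in fact the intended hypothesis of the statement rather than a gap.
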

\begin{proof}
Let $y=(1,a_m,1)$ and let $t\in G^+$ be such that $t(1,a_n,1)t^{-1}=(a_n,1,1)$. Define $x=tyt^{-1}$, then $x$ and $y$ both have order $p^m$.

Let $D=\{1\}\times E_m \times H$, which is a subgroup of $G$ and hence also a subgroup of $G^+$. We have
$$ x^{p^{m-n}}=t y^{p^{m-n}} t^{-1}=t (1,a_n,1)t^{-1}=(a_n,1,1)\,.$$

This implies $\langle x^{p^{m-n}}\rangle\cap D = \{1\}$, but then $\langle x \rangle \cap D=\{ 1 \}$ by Lemma~\ref{Imp}. Thus, we have an injective map (not necessarily a group morphism):
\[\theta \colon \langle x \rangle \times D \rightarrow \langle \{ x \} \cup D \rangle\,,\qquad  \theta(a,b)=ab\,.\]
Therefore we have $|\langle \{ x \} \cup D \rangle| \geq |\langle x \rangle | \times |D|=p^{2m}\times |H|$.
Moreover, since $x^{{p}^{m-n}}=(a_n,1,1) \in G $ and $G$ is commutative, the element $x^{p^{m-n}}$ commutes with the elements of the subgroup $\langle \{x\} \cup D \rangle$ (as it commutes with the generators of this subgroup).

However $x^{p^{m-n}}$ does not commute with $t$ (since $t(1,a_n,1)t^{-1}=(a_n,1,1)=x^{{p}^{m-n}}$). Therefore, $t \notin  \langle \{x\}  \cup D \rangle $. Hence, $\langle \{x \} \cup D \rangle$ is a proper subgroup of $G^+$ and has an index at least $2$ in $G^+$.
We conclude that $$|G^+| \geq 2 |\langle \langle x \rangle \cup D \rangle| \geq 2p^{2m} \times |H|.$$

\end{proof}

Next we work in the context of Theorem~\ref{Ord}(1): $G$ is a non-abelian group of order $p^{m+n}$, with $1\leq n < m$, that is not a $Q$-group (in particular, $G$ is not the quaternion group of order $8$). We assume that $G$ has an element $y$ of order $p^m$.

\begin{prop}\label{min-nonab}
    With notation as above, there exists $(a,b) \in G \times \langle y \rangle $, with $\langle a \rangle \cap \langle b \rangle = \{1\}$, such that $G$ satisfies $\mathcal{P}(p)$ relatively to $(a,b)$. Moreover, if $G$ is a subgroup of a group $G^+$ satisfying $\mathcal{P^+}(p)$ relatively to $(a,b)$, then $|G^+| \geq p^{m+n}(p^{m-n}+1)$.
\end{prop}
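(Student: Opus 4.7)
The plan is to obtain Part 1 by inspecting the proof of Theorem~\ref{Ord}(1) under the stronger hypothesis that $G$ is not a $Q$-group, and to obtain Part 2 by exhibiting a well-chosen element $y'\in G^+$ of order $p^m$ whose cyclic subgroup meets $G$ as little as possible, then counting elements in $G\cdot\langle y'\rangle$.

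For Part 1, I would revisit case (ii) of the proof of Theorem~\ref{Ord}(1): it forces $G$ to be a $Q$-group (via Theorem~\ref{BN} when $p=2$) or yields a contradiction (when $p$ is odd). Under the current hypothesis that $G$ is not a $Q$-group, only case (i) can occur, producing an element $x\in G$ whose cyclic subgroup $\langle x\rangle$ is maximal of some order $p^r\leq p^n$ and satisfies $\langle x\rangle\cap\langle y\rangle=\{1\}$. Setting $a:=x$ and $b:=y^{p^{m-r}}\in\langle y\rangle$, one has $\Ord(a)=\Ord(b)=p^r$, $b$ is the $p$-th power of $y^{p^{m-r-1}}$ (which makes sense because $r<m$), and $a$ has no $p$-th root in $G$ by maximality of $\langle a\rangle$. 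Hence Proposition~\ref{Psimp} gives $\mathcal{P}(p)$ relatively to $(a,b)$, and $\langle a\rangle\cap\langle b\rangle\subseteq\langle a\rangle\cap\langle y\rangle=\{1\}$.

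For Part 2, let $t\in G^+$ with $tat^{-1}=b$ and set $y':=t^{-1}yt\in G^+$, so that $y'$ has order $p^m$ and $(y')^{p^{m-r}}=t^{-1}bt=a$. The key observation is that $G\cap\langle y'\rangle=\langle a\rangle$: this intersection is a cyclic subgroup of $\langle y'\rangle$ of order $p^s$ for some $s\geq r$, and if $s>r$ then the element $(y')^{p^{m-r-1}}$ would lie in $G$ and furnish a $p$-th root of $a$, contradicting $\mathcal{P}(p)$. Lemma~\ref{card} then yields $|G\cdot\langle y'\rangle|=p^{2m+n-r}$. If $r<n$, this already gives $|G^+|\geq p^{2m+1}\geq p^{m+n}(p^{m-n}+1)$ (since $(p-1)p^{m-n}\geq 1$). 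Assume therefore $r=n$, so $|G\cdot\langle y'\rangle|=p^{2m}$, and consider $K:=\langle G,y'\rangle$. By Lagrange $|K|$ is a multiple of $p^{m+n}$ and is at least $p^{2m}$; hence either $|K|\geq p^{m+n}(p^{m-n}+1)$ and we are done, or $|K|=p^{2m}$, in which case $K=G\langle y'\rangle$ is a subgroup. If $K$ is a proper subgroup of $G^+$ then $|G^+|\geq 2|K|\geq p^{m+n}(p^{m-n}+1)$. Otherwise $G^+=K=G\langle y'\rangle$; writing $t=g(y')^k$ with $g\in G$, the identity $y'=t^{-1}yt=(y')^{-k}(g^{-1}yg)(y')^k$ combined with the commutativity of $\langle y'\rangle$ forces $g^{-1}yg=y'$, contradicting $g^{-1}yg\in G$ versus $y'\notin G$ (which holds because $\Ord(y')=p^m>p^n=|\langle y'\rangle\cap G|$ implies $\langle y'\rangle\not\subseteq G$).

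The delicate step is the final sub-case $r=n$ with $K=G^+$, where the naive cardinality bounds are exactly saturated; ruling it out requires using the specific origin of $y'$ as a $t$-conjugate of the element $y\in G$ rather than as a generic element of order $p^m$ in $G^+$.
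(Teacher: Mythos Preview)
Your proof is correct, but it takes a longer route than the paper's. Both arguments hinge on the same conjugate element $y'=t^{-1}yt$ (the paper calls it $x=tyt^{-1}$, with $t$ relabeled), of order $p^m$ and satisfying $(y')^{p^{m-r}}=a$. The divergence is in which product set is counted.

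The paper works with $\langle y'\rangle\langle y\rangle$: from $\langle a\rangle\cap\langle b\rangle=\{1\}$ and Lemma~\ref{Imp} one gets $\langle a\rangle\cap\langle y\rangle=\{1\}$, hence (again by Lemma~\ref{Imp}, now applied to $y'$) $\langle y'\rangle\cap\langle y\rangle=\{1\}$ and $|\langle y'\rangle\langle y\rangle|=p^{2m}$. A two-line argument then shows $t\notin\langle y'\rangle\langle y\rangle$ (writing $t=(y')^iy^j$ and unwinding $y'=tyt^{-1}$ forces $y\in\langle y'\rangle$), so $|G^+|>p^{2m}$; since $p^{m+n}\mid |G^+|$ by Lagrange, the bound follows immediately, with no case split.

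You instead work with $G\langle y'\rangle$ and prove the pleasant intermediate fact $G\cap\langle y'\rangle=\langle a\rangle$, but this forces you to branch on $r<n$ versus $r=n$, and within the latter on whether $\langle G,y'\rangle$ equals $G\langle y'\rangle$ and whether it exhausts $G^+$. All branches close correctly (in particular, your conjugation step $(y')^{-k}(g^{-1}yg)(y')^k=y'\Rightarrow g^{-1}yg=y'$ is fine since $\langle y'\rangle$ is abelian). The trade-off: your approach yields a sharper description of how $\langle y'\rangle$ sits relative to $G$, while the paper's approach bypasses the case analysis entirely by choosing the smaller product set $\langle y'\rangle\langle y\rangle$ and using Lagrange for the final step.
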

\begin{proof}
Since $G$ is not a $Q$-group, then, the proof of Theorem~\ref{Ord}(1), proves the existence of such elements $a,b \in G$.\\
Let $t \in G^+ $ be such that $tbt^{-1}=a$. Setting $x=tyt^{-1}$, Lemma \ref{Imp} shows that $\langle x \rangle \cap \langle y \rangle = \{1\}$, therefore $|\langle x \rangle \langle y \rangle | =|\langle x \rangle |.|\langle y \rangle |=p^{2m}$.\\
We claim that $t \notin \langle x \rangle \langle y \rangle$: by contradiction let $i,j$ be integers such that $t=x^i y^j$. Then we have $x=tyt^{-1}=x^i y x^{-i}$ from which we deduce that $y\in \langle x \rangle$; a contradiction.
Therefore, $|G^+| > |\langle x \rangle \langle y \rangle |=p^{2m}$. As $|G|=p^{m+n}$ divides  $|G^+|$ then $$|G^+| \geq p^{2m}+p^{m+n}=p^{m+n}(p^{m-n}+1)\, .$$

\end{proof}

The lower bound in Proposition~\ref{min-nonab} is optimal as the following example shows.
\begin{exe}\label{S4}
    Let $G:=\langle (1\ 2\ 3\ 4),(1\ 2)(3\ 4) \rangle \subset \frak{S}_4=: G^+$ . Then $G$ is isomorphic to the dihedral group $D_8$ of order $8=2^{2+1}$. If $a=(1\ 2)(3\ 4)$ and $b=(1\ 3)(2\ 4)$, then $G$ satisfies $\mathcal{P}$ relatively to $(a,b)$. Also, $a$ and $b$ are conjugate in $G^+$, so $G^+$ satisfies $\mathcal{P^+}$ relatively to $(a,b)$. Moreover, $|G^+|=24=2^{2+1}(2^{2-1}+1)$ attaining the lower bound of Proposition~\ref{min-nonab}.
\end{exe}

The example shows that $\mathfrak S_4$ satisfies $\mathcal P^+$. In fact it has minimal order for Property $\mathcal P^+(p)$, uniformly over primes $p$ (this is a consequence of Corollary~\ref{prop:3p3}).


\subsection{Characterization of $p$-groups satisfying $\mathcal{P^+}$}

\begin{prop}\label{Inj}
    Let $H\rightarrow G^+$ be an injective group homomorphism. If $H$ satisfies $\mathcal{P^+}(p)$, then $G^+$ satisfies $\mathcal{P^+}(p)$. 
\end{prop}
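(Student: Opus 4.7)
The plan is to unwind the definition of $\mathcal{P}^+(p)$ and then transport it through the injective homomorphism $\iota \colon H \hookrightarrow G^+$. Since the property is really a group-theoretic condition that only involves a subgroup together with two conjugate elements in it, and since $\iota$ identifies $H$ with an isomorphic copy $\iota(H) \subset G^+$, the witnesses for $\mathcal{P}^+(p)$ inside $H$ should immediately produce witnesses inside $G^+$.

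More concretely, the first step is to apply Definition~\ref{P+} to $H$: there exist elements $a, b \in H$ that are conjugate in $H$ and a subgroup $G \subset H$ containing both $a$ and $b$ such that $G$ satisfies $\mathcal{P}(p)$ relatively to $(a,b)$. The candidates for $G^+$ will then be $a' := \iota(a)$, $b' := \iota(b)$, and $G' := \iota(G) \subset G^+$.

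Next I would verify the three conditions required by Definition~\ref{P+} for the triple $(a', b', G')$. First, conjugacy in $G^+$: if $h \in H$ satisfies $hah^{-1} = b$, then applying the homomorphism $\iota$ gives $\iota(h)\,a'\,\iota(h)^{-1} = b'$, so $a'$ and $b'$ are conjugate in $\iota(H) \subset G^+$, hence in $G^+$. Second, $G'$ is a subgroup of $G^+$ (since $\iota$ is a homomorphism) and contains $a'$ and $b'$ by construction. Third, $\mathcal{P}(p)$ for $G'$ relative to $(a', b')$: because $\iota$ is injective, its restriction $\iota|_G \colon G \to G'$ is a group isomorphism, and the counting functions $r_\ell$ of Definition~\ref{P} depend only on the isomorphism class of the ambient group together with the marked element. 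Thus for every $\ell$ one has $r_\ell^{G'}(a') = r_\ell^{G}(a)$ and $r_\ell^{G'}(b') = r_\ell^{G}(b)$, and the defining inequalities and equalities of $\mathcal{P}(p)$ carry over verbatim.

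There is no real obstacle here; the only thing to be careful about is to observe that $r_\ell$ is an intrinsic invariant of the pair (group, element), so that it behaves functorially under isomorphisms, which is exactly what makes the transport along $\iota$ work. Putting these three verifications together shows that $G^+$ satisfies $\mathcal{P}^+(p)$, completing the proof.
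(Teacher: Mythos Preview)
Your proof is correct and follows essentially the same approach as the paper's own proof: transport the witnesses $(G,a,b)$ for $\mathcal{P}^+(p)$ in $H$ through the embedding $\iota$, and note that conjugacy in $\iota(H)$ implies conjugacy in $G^+$. The paper's version is more terse, simply identifying $H$ with a subgroup of $G^+$ and omitting the verification that $\mathcal{P}(p)$ is an isomorphism invariant, but your explicit check that the $r_\ell$ counts transfer along $\iota|_G$ is a harmless (and arguably helpful) elaboration of the same idea.
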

\begin{proof}
$G^+$ contains a subgroup that satisfies $\mathcal{P^+}(p)$. Therefore, it contains a subgroup $G$ and two elements $a, b$ such that $G$ satisfies $\mathcal{P}(p)$ relatively $(a,b)$ and such that $a$ and $b$ are conjugate in a subgroup of $G^+$, hence conjugates in $G^+$. Thus, $G^+$ satisfies $\mathcal{P^+}(p)$. 

\end{proof}

\begin{cor}
    The symmetric group $\frak{S}_n$ satisfies $\mathcal{P}^+$ if and only if $n \geq 4$. The alternating group $\frak{A}_n$ satisfies $\mathcal{P}^+$ if and only if $n \geq 6$. 
\end{cor}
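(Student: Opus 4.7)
The plan is to handle the four implications separately. For the sufficient directions I will leverage Proposition~\ref{Inj}; for the necessary directions I will rely on Corollary~\ref{prop:3p3} together with a direct argument in the one case where the order bound does not immediately suffice, namely $\frak{A}_5$.

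For $\frak{S}_n$ with $n \geq 4$, since $\frak{S}_4$ satisfies $\mathcal{P}^+$ by Example~\ref{S4}, and $\frak{S}_4$ embeds in $\frak{S}_n$ as the pointwise stabilizer of $\{5,\ldots,n\}$, Proposition~\ref{Inj} gives the result. For $\frak{S}_n$ with $n \leq 3$ we have $|\frak{S}_n| \leq 6 < 24 = 3\cdot 2^3$, contradicting Corollary~\ref{prop:3p3}. Likewise, for $\frak{A}_n$ with $n \leq 4$, $|\frak{A}_n| \leq 12 < 24$, so $\mathcal{P}^+$ fails by the same corollary.

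For $\frak{A}_n$ with $n \geq 6$, I would first reduce (again by Proposition~\ref{Inj} and the inclusion $\frak{A}_6 \hookrightarrow \frak{A}_n$ as the pointwise stabilizer of $\{7,\ldots,n\}$) to producing $\mathcal{P}^+$ in $\frak{A}_6$. I exhibit an explicit copy of $D_8$ by setting $r := (1\ 2\ 3\ 4)(5\ 6)$ and $s := (1\ 2)(3\ 4)$, both even permutations. A direct check gives $\Ord(r) = 4$, $\Ord(s) = 2$ and $srs = r^{-1}$, so $G := \langle r, s\rangle$ is a $D_8$-subgroup of $\frak{A}_6$. Taking $a := s$ and $b := r^2 = (1\ 3)(2\ 4)$, the squares in $G$ are exactly $\{1, r^2\}$, so $b$ is a square in $G$ while $a$ is not, meaning $G$ satisfies $\mathcal{P}$ relatively to $(a,b)$. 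Finally $a$ and $b$ are conjugated by $(2\ 3\ 4) \in \frak{A}_4 \subset \frak{A}_6$, so $\frak{A}_6$ satisfies $\mathcal{P}^+$.

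The only subtle point, and the main obstacle, is ruling out $\frak{A}_5$, whose order $60$ exceeds the lower bound of Corollary~\ref{prop:3p3}. Suppose for contradiction that $\frak{A}_5$ satisfied $\mathcal{P}^+$, witnessed by $a, b$ in a subgroup $G \subset \frak{A}_5$. Then $\Ord(a) = \Ord(b) \in \{2, 3, 5\}$, the only orders of non-identity elements in $\frak{A}_5$. If the common order is $3$ or $5$, it is coprime to $2$, and in $\langle a \rangle$ the element $a$ has a square root (namely $a^{(\Ord(a)+1)/2}$), contradicting $\mathcal{P}$. If the common order is $2$, then Lemma~\ref{1} forces any square root of $b$ to have order $4$; but $\frak{A}_5$ contains no element of order $4$, since elements of order $4$ in $\frak{S}_5$ are $4$-cycles, which are odd. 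Hence $b$ cannot be a square in any subgroup of $\frak{A}_5$, again contradicting $\mathcal{P}$. This finishes the proof.
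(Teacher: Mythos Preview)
Your proof is correct and follows essentially the same approach as the paper: Example~\ref{S4} plus Proposition~\ref{Inj} for $\frak{S}_n$ ($n\geq 4$), the order bound of Corollary~\ref{prop:3p3} for small cases, and the same $D_8$-subgroup $\langle (1\ 2\ 3\ 4)(5\ 6),\ (1\ 2)(3\ 4)\rangle$ inside $\frak{A}_6$. The only cosmetic differences are that the paper conjugates $a$ to $b$ via $(2\ 3)(5\ 6)$ rather than your $(2\ 3\ 4)$, and for $\frak{A}_n$ with $n\leq 5$ the paper invokes the Sylow structure (Klein four, hence no element of order $4$) together with the opening lines of Proposition~\ref{prop:p3}, which is exactly the content of your direct case analysis.
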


\begin{proof}
    Example \ref{S4} shows that $\frak{S}_4$ satisfies $\mathcal{P^+}$. Thus, using Proposition \ref{Inj}, $\frak{S}_n$ satisfies $\mathcal{P^+}$ for all $n \geq 4$. By Proposition $\ref{prop:3p3}$ $\frak{S}_n$ does not satisfy $\mathcal{P}^+$ for $n\leq 3$.
    
    As for the alternating group, Proposition \ref{Inj} allows us to only consider the case $n=6$. The subgroup $S:=\langle (1\ 2\ 3\ 4)(5\ 6),\ (1\ 2)(3\ 4) \rangle \subset \mathcal{A}_6$ has order $8$. The elements $a=(1\ 2)(3\ 4)$ and $b=(1\ 3)(2\ 4)$ have the same order, and $b$ is a square in $S$ while $a$ is not. Moreover, denoting $\sigma=(2\ 3)(5\ 6)\in \mathcal{A}_6$ we have $\sigma a \sigma^{-1}=b$.
    
    To see that $\frak{A}_n$ does not satisfy $\mathcal{P}^+$ for $n\leq 5$, it is sufficient to see that any of its $2$-Sylow subgroups (which is isomorphic to the Klein four group for $4\leq n \leq 5$ and trivial for $n\leq 3$) do not contain any element of order $4$ (here we argue as in the beginning of the proof of Proposition~\ref{prop:p3}).

\end{proof}

\begin{rk} Let $k$ be a finite field, using Proposition \ref{Inj} and using a faithful representation of $\frak{S}_n$ into $\GL_n(k)$ and $\text{PGL}_n(k)$ we see also that $\GL_n(k)$ and $\text{PGL}_n(k)$ satisfy $\mathcal{P}^+$ for $n\geq 4$. Samewise, $\mathcal{A}_n$ admits a faithful representation into $\text{SL}_n(k)$ and $\text{PSL}_n(k)$ and therefore these groups satisfy $\mathcal{P}^+$ when $n\geq6$.
\end{rk}

Proposition \ref{Inj} provides a tool to demonstrate that a group $G^+$ satisfies $\mathcal{P^+}(p)$. If we prove that a subgroup of $G^+$ (especially one of its $p$-subgroups) satisfies $\mathcal{P^+}(p)$, we can conclude that $G^+$ satisfies $\mathcal{P^+}(p)$. 

Theorem \ref{CarP+} gives a complete characterization of $p$-groups satisfying $\mathcal{P^+}(p)$.

\begin{thm}\label{CarP+}
    Let $G^+$ be a finite $p$-group. Then $G^+$ satisfies $\mathcal{P}^+(p)$ if and only if there exists $x\in G^+$ such that $\langle x^p \rangle$ is not normal in $G^+$.
\end{thm}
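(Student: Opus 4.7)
The plan is to prove the two implications separately, with the forward direction following quickly from the definitions and the reverse direction requiring a Frattini-quotient argument.

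For the forward direction, assume $G^+$ satisfies $\mathcal{P}^+(p)$, so there exist a subgroup $G \leq G^+$ and conjugate elements $a, b \in G$ (say $b = gag^{-1}$ with $g \in G^+$) such that $G$ satisfies $\mathcal{P}(p)$ relative to $(a,b)$. The latter yields $0 = r_p(a) < r_p(b)$, so we may pick $c \in G$ with $c^p = b$. Setting $x := g^{-1}cg \in G^+$ gives $x^p = g^{-1}bg = a$. Suppose, for contradiction, that $\langle x^p \rangle = \langle a \rangle$ is normal in $G^+$; then $b = gag^{-1} \in \langle a \rangle$, so $b = a^k$ for some $k$ with $\gcd(k,p) = 1$ (since $a$ and $b$ have the same order, a power of $p$). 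Choosing $u \in \Z$ with $uk \equiv 1 \pmod{\Ord(a)}$ yields $a = b^u = (c^u)^p$, contradicting $r_p(a) = 0$. Hence $\langle x^p \rangle$ is not normal in $G^+$.

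For the reverse direction, let $x \in G^+$ be such that $\langle x^p \rangle$ is not normal in $G^+$, pick $g \in G^+$ with $b := g x^p g^{-1} \notin \langle x^p \rangle$, and set $a := x^p$ and $c := gxg^{-1}$; note $c^p = b$. Consider $G := \langle a, c \rangle \leq G^+$. The elements $a, b$ lie in $G$, are $G^+$-conjugate (hence of the same order), and $b$ admits the $p$-th root $c$ in $G$. Since $G \leq G^+$ is a $p$-group, Proposition~\ref{Psimp} reduces the verification that $G$ satisfies $\mathcal{P}(p)$ relative to $(a, b)$ to the non-existence of a $p$-th root of $a$ in $G$. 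Once this is done, the $G^+$-conjugacy of $(a, b)$ yields property $\mathcal{P}^+(p)$ for $G^+$.

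The main obstacle is therefore proving that $a$ has no $p$-th root inside $G$. The key observation is that $G$ cannot be cyclic: in a cyclic $p$-group there is a unique subgroup of each order, which would force $\langle a \rangle = \langle b \rangle$ from the equality $\Ord(a) = \Ord(b)$ and contradict $b \notin \langle a \rangle$. Hence $G$ is a non-cyclic $p$-group generated by the two elements $a$ and $c$, so by the Burnside basis theorem $\dim_{\F_p}(G/\Phi(G)) = 2$, where $\Phi(G) = G^p[G,G]$. The images $\bar a, \bar c$ generate $G/\Phi(G)$ and therefore form an $\F_p$-basis of it. Consequently, the unique linear functional on $G/\Phi(G)$ sending $\bar c$ to $0$ and $\bar a$ to $1$ lifts to a group homomorphism $\phi \colon G \to \F_p$. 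Any $p$-th power $y^p \in G$ satisfies $\phi(y^p) = p\phi(y) = 0$, whereas $\phi(a) = 1$, so $a$ is not a $p$-th power in $G$, completing the proof.
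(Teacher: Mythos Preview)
Your proof is correct. The forward direction matches the paper's argument essentially verbatim: both show that if $\langle a\rangle$ were normal then $b\in\langle a\rangle$, whence $a$ would inherit a $p$-th root from $b$.

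The reverse direction, however, takes a genuinely different and cleaner route. The paper first proves an auxiliary result (Lemma~\ref{ConjNorm}) guaranteeing, in any nilpotent group, a conjugate $b$ of $a$ lying in $N_{G^+}(\langle a\rangle)\setminus\langle a\rangle$; this normalizing property is then used to force $\langle a\rangle\lhd G=\langle a,y\rangle$, after which a somewhat intricate computation in the cyclic quotient $G/\langle a\rangle$ (tracking orders of $\pi(z)$, $\pi(y)$, and the element $y^{p^r}$) derives a contradiction. Your argument bypasses all of this: you take any conjugate $b\notin\langle a\rangle$, set $G=\langle a,c\rangle$ with $c^p=b$, observe that $G$ is non-cyclic, and invoke the Burnside basis theorem to get a surjection $\phi\colon G\to\F_p$ with $\phi(a)=1$, immediately precluding $a$ from being a $p$-th power. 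This is shorter, avoids the descending-central-series lemma entirely, and makes the obstruction to $a$ being a $p$-th power transparent (it is a Frattini-quotient obstruction). The paper's approach does have the minor advantage that Lemma~\ref{ConjNorm} is stated for nilpotent groups rather than $p$-groups, but since Theorem~\ref{CarP+} is only claimed for $p$-groups, that extra generality is not used.
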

\begin{proof}
Let $G^+$ be a $p$-group satisfying $\mathcal{P^+}(p)$ relatively to a subgroup $G$ and a couple if elements $(a,b)$. Since $a$ is conjugate to $b$ in $G^+$ and since $b$ has $p$-th roots in $G$, there exists $x\in G^+$ be such that $x^p=a$. Note that $\langle a \rangle \neq \langle b \rangle$ (otherwise any subgroup containing a $p$-th root of $b$ must contain a $p$-th root of $a$), therefore if $t\in G^+$ satisfies $tat^{-1}=b$, then $t\langle a \rangle t^{-1}=\langle b \rangle \neq \langle a \rangle$, thus $\langle a\rangle=\langle x^p \rangle$ is not normal in $G^+$.

For the converse, we will need the following result.

\begin{lem}\label{ConjNorm}
    Let $G^+$ be a finite nilpotent group. Suppose that there exists $a\in G^+$ such that $\langle a \rangle$ is not normal in $G^+$. Then there exists a conjugate $b$ of $a$ in $G^+$ such that $b\in N_{G^+}(\langle a \rangle) \setminus \langle a \rangle $, where $N_{G^+}(\langle a \rangle)$ denotes the normalizer of the subgroup $\langle a \rangle$ in $G^+$.
\end{lem}

Using the Lemma we finish the proof of the Theorem.
Since $G^+$ is a $p$-group (and thus a nilpotent group), we can apply the lemma for $a=x^p$. There exists $b\in G^+$, a conjugate of $a$, such that $b \in N_{G^+}(\langle a \rangle) \setminus \langle a \rangle$.
Assume by contradiction that $G^+$ does not satisfy $\mathcal{P^+}(p)$. The subgroup $\langle x,b \rangle$ contains a $p$-th root $y$ of $b$ (otherwise, by Proposition~\ref{Inj} $G^+$ would satisfy $\mathcal{P^+}(p)$). Let $G=\langle a,y \rangle$ and note that since $y \in \langle x,b \rangle$ and $x$ and $b$ normalize $\langle a \rangle$, the subgroup $\langle a \rangle$ is normal in $G$. In particular, $\langle a \rangle \langle y \rangle=\langle y \rangle \langle a \rangle=G$ 
and the quotient group $G/\langle a \rangle$ is cyclic generated by the class of $y$. Let $\pi \colon G \rightarrow G/\langle a \rangle$ be the associated canonical projection. 

The subgroup $G$ contains $a$ and $b$ as well as a $p$-th root (namely $y$) of $b$, and therefore our assumption that $G^+$ does not satisfy $\mathcal{P^+}(p)$ implies that $G$ contains a $p$-th root $z$ of $a$.
The element $\pi(z)$ has order $p$, and $b\notin \langle a \rangle$ (recall that $b\in N_{G^+}(\langle a \rangle) \setminus \langle a \rangle$), so $\pi(b)$ has order $\geq p$ (since $G/\langle a\rangle$ is $p$-group as well) and thus $\Ord(\pi(y))=p^r$ for some $r\geq 2$. Also note that $y^{p^r}\in \langle  a \rangle$ has order $\leq \frac{\Ord(a)}{p}$ (since $y^p=b$ has the same order as $a$), so $y^{p^r} \in \langle a^p \rangle$. However, $\pi(z)=\pi(y)^{p^{r-1}s}$, where $s$ is coprime to $p$ (since $\pi(z)$ has order $p$ in $G/\langle a \rangle=\langle \pi(y)\rangle$). Thus, there exists $d\in \mathbb{Z}$ such that $z=y^{sp^{r-1}}a^d$. Therefore, $(za^{-d})^p=y^{sp^r} \in \langle a^p \rangle $. But $a$ commutes with $z$ (since $a\in \langle z \rangle$), so $a=z^p\in a^{pd}\langle a^p \rangle=\langle a^p \rangle$. This is a contradiction.

\end{proof}

Let's prove the lemma: \\
\begin{proof}[Proof of Lemma~\ref{ConjNorm}]
Assume, by contradiction, that there is no such element $b$. Thus, for every conjugate $y$ of $a$, we have: $$ y \notin \langle a \rangle \implies y \notin N_{G^+}\langle a \rangle\,.$$
We start by noticing that if $x, y \in G^+$, then $[x,y]y=xyx^{-1}$ is a conjugate of $y$ (here $[x,y]=xyx^{-1}y^{-1}$). Since $\langle a\rangle$ is not normal we consider an element $t \in G^+$ such that $t\langle a \rangle t^{-1} \ne \langle a \rangle $, and we set $\beta:=tat^{-1}$. We have $\beta \notin \langle a \rangle$, and thus $\beta \notin N_{G^+} \langle a \rangle$. Define the sequence $(b_n)_{n\geq 1}$ by 
\[
b_1:= [\beta,a]a=\beta a \beta ^{-1}\,,\qquad  b_{n+1}=[b_na^{-1},a]a\qquad (n \geq 1)\,.
\]
For every $n \geq 1$, the element $b_n$ is conjugate to $a$. Moreover $b_1 \notin \langle a \rangle$ since $\beta \notin N_{G^+} \langle a \rangle$. If $b_n \notin \langle a \rangle$ for some $n\geq 1$, then $b_n \notin N_{G^+}\langle a \rangle $, and as $$b_{n+1}=b_na^{-1}\cdot a\cdot a\cdot b_n^{-1}a^{-1}a=b_nab_n^{-1}\,,$$
then $b_{n+1} \notin \langle a \rangle$. This induction shows that $b_n\notin\langle a\rangle$ for all $n\geq 1$. 

In particular, we have $b_n \ne a$ for every $n \geq 1$. However the equality $b_n a^{-1}= [b_{n-1}a^{-1},a]$ implies that for every $n \geq 1$, 
$$b_n a^{-1}=[\ldots [[\beta,a],a],\dots,a]
$$
where $a$ appears $n$ times.
This shows that $b_na^{-1}\in C^{(n+1)}(G^+)$, where $(C^{(n)}(G^+))_{n\geq 1}$ is the descending central series of $G^+$. Since $G^+$ is nilpotent there exists $m\geq 1 $ such that $b_m a^{-1}=1$; a contradiction.

\end{proof}

\begin{rk} Theorem \ref{CarP+} does not hold in general when $G^+$ is not a $p$-group. Indeed, considering the holomorph of $\Z/5\Z$ (thus $G^+=\Z/5\Z \rtimes \Z/4\Z$), the group $G^+$ does not satisfy $\mathcal{P^+}$ (by Proposition~\ref{prop:3p3}, since $|G|< 24$). However its $2$-Sylow subgroups are cyclic, and it is easy to see that $G^+$ has at least two elements of order $2$, which are necessarily conjugate (since $2$-Sylow subgroups are conjugate). Thus $G^+$ contains an element of order $4$ whose square generates a non-normal subgroup.
\end{rk}

Along the lines of the remark above, we draw the following consequence of Theorem~\ref{CarP+} .

\begin{cor}
    Let $G^+$ be a $2$-group such that there exists $x\in G^+$ of order $4$ with $x^2 \notin \mathcal{Z}(G^+)$, then $G^+$ satisfies $\mathcal{P^+}$.
\end{cor}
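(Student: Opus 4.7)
The plan is to invoke Theorem~\ref{CarP+} directly: it suffices to exhibit an element $y\in G^+$ such that $\langle y^2\rangle$ fails to be normal in $G^+$. The hypothesis hands us a candidate, namely $y=x$, and the whole argument reduces to translating the condition ``$x^2\notin\mathcal Z(G^+)$'' into the condition ``$\langle x^2\rangle$ is not normal in $G^+$''.

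The key observation is that $x^2$ has order exactly $2$ (since $\Ord(x)=4$), so the subgroup $\langle x^2\rangle=\{1,x^2\}$ has order $2$. For any $g\in G^+$, the conjugate $gx^2g^{-1}$ is again an element of order $2$, hence different from $1$. Therefore $\langle x^2\rangle$ is normalized by $g$ if and only if $gx^2g^{-1}=x^2$. Taking all $g\in G^+$, this shows that $\langle x^2\rangle\lhd G^+$ is equivalent to $x^2\in\mathcal Z(G^+)$. By our assumption, the latter fails, and hence $\langle x^2\rangle$ is not normal in $G^+$.

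Now Theorem~\ref{CarP+} applied with $p=2$ tells us that $G^+$ satisfies $\mathcal{P}^+(2)$, that is $\mathcal{P}^+$, which is the desired conclusion. There is no real obstacle here: the statement is essentially a rephrasing of the criterion of Theorem~\ref{CarP+} specialized to the case when the distinguished $p$-th power is produced by an element of order $p^2=4$, using the elementary fact that normality of an order-$2$ subgroup coincides with centrality of its non-trivial element.
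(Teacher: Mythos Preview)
Your proof is correct and follows essentially the same approach as the paper: you observe that since $x^2$ has order $2$, the subgroup $\langle x^2\rangle$ is normal if and only if $x^2$ is central, and then invoke Theorem~\ref{CarP+}. The paper's proof is simply a terser version of exactly this argument.
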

\begin{proof}
Since $x^2$ has order $2$, $x^2 \notin \mathcal{Z}(G^+)$ implies that $\langle x^2 \rangle$ is not normal in $G^+$. Therefore we can apply Theorem~\ref{CarP+}.

\end{proof}

\begin{cor}\label{Matrices}
    Let $n \geq 3$ and $m\geq 2$ and let $G^+$ be the $2$-subgroup of $\GL_n(\Z/2^m\Z)$ consisting of all upper triangular matrices with diagonal coefficients all equal to $1$. Then $G^+$  satisfies $\mathcal{P^+}$; in particular the groups $\GL_n(\Z/2^m\Z)$ and $\mathrm{SL}_n(\Z/2^m\Z)$ satisfy $\mathcal{P^+}$.
\end{cor}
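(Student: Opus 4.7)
The plan is to apply Theorem~\ref{CarP+}: it suffices to exhibit an element $x\in G^+$ such that $\langle x^2\rangle$ is not normal in $G^+$. Once this is done, Property $\mathcal{P}^+$ for $\mathrm{SL}_n(\Z/2^m\Z)$ and $\GL_n(\Z/2^m\Z)$ follows from Proposition~\ref{Inj} via the inclusions $G^+\subset \mathrm{SL}_n(\Z/2^m\Z) \subset \GL_n(\Z/2^m\Z)$, since every unitriangular matrix has determinant~$1$.

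For the witness, I would use the matrix units $E_{i,j}$ and set $x := I + E_{1,2}\in G^+$. Since $E_{1,2}^{\,2}=0$, a short induction gives $x^{2^k}=I+2^{k}E_{1,2}$, so $x$ has order $2^m$ and $x^2 = I+2E_{1,2}$. In particular, every element of $\langle x^2\rangle$ is of the form $I+2k\,E_{1,2}$ with $k\in\Z/2^{m-1}\Z$ and therefore has vanishing $(1,3)$-coefficient.

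To break normality I would conjugate $x^2$ by $g:=I+E_{2,3}$, which lies in $G^+$ by the hypothesis $n\geq 3$ (and whose inverse is $I-E_{2,3}$ since $E_{2,3}^{\,2}=0$). A direct calculation using $E_{1,2}E_{2,3}=E_{1,3}$, $E_{2,3}E_{1,2}=0$ and $E_{2,3}^{\,2}=0$ yields
\[
g\,x^2\,g^{-1} \;=\; I+2E_{1,2}-2E_{1,3}.
\]
Its $(1,3)$-coefficient is $-2$, which is nonzero in $\Z/2^m\Z$ because $m\geq 2$. Hence $g\,x^2\,g^{-1}\notin \langle x^2\rangle$, so $\langle x^2\rangle$ is not normal in $G^+$, and Theorem~\ref{CarP+} concludes.

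There is no real obstacle here: the argument comes down to a single computation with matrix units, and the two hypotheses are both used in an essential (and minimal) way, $n\geq 3$ to have an $E_{2,3}$ available to perform the conjugation, and $m\geq 2$ so that $2$ is a nonzero nilpotent in $\Z/2^m\Z$, ensuring simultaneously that $x^2\neq I$ and that the $(1,3)$-obstruction is visible.
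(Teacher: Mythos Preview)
Your proof is correct and follows essentially the same idea as the paper's: both exploit that an element of the form $I+cE_{1,2}$ fails to commute with $I+E_{2,3}$ in order to show that the relevant cyclic subgroup is not normal (equivalently, not central). The only minor difference is that the paper routes through the preceding corollary by choosing an order-$4$ element $M=I+2^{m-2}E_{1,2}$ and checking $M^2\notin\mathcal{Z}(S)$, whereas you apply Theorem~\ref{CarP+} directly with $x=I+E_{1,2}$ of full order $2^m$; your version is slightly cleaner and works uniformly for all $n\ge 3$ without first reducing to $n=3$.
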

\begin{proof}
For $m\geq 2$, let $A=\Z/2^m\Z$. By Proposition \ref{Inj}, it is enough to show that $\GL_3(A)$ and $\mathrm{SL}_3(A)$ satisfy $\mathcal{P^+}$. Let $S$ be the subgroup of upper triangular matrices in $\mathrm{SL}_3(A)$ with diagonal coefficients all equal to $1$. 
Consider the following matrices
\[ M = \begin{pmatrix}
    \overline{1} & \overline{2}^{m-2} & 0 \\
    \overline{1} & \overline{1} & 0 \\
    0 & 0 & \overline{1} 
\end{pmatrix} \,,\qquad
N = \begin{pmatrix}
    \overline{1} & 0 & 0 \\
    0 & \overline{1} & \overline{1} \\
    0 & 0 & \overline{1} 
\end{pmatrix}\,.\]
We have $M^2\in S$ and $N\in S$, however $M^2 N \neq N M^2$ and therefore $M^2 \notin \mathcal{Z}(S)$. According to Corollary~\ref{Matrices}, $S$ satisfies $\mathcal{P^+}$, hence the result.

\end{proof}

Theorem \ref{p_groupsBias} is, thus, a direct consequence of theorem \ref{CarP+} and proposition \ref{P+simp}, whereas Corollary~\ref{p_groupsbiascor} is a consequence of Proposition~\ref{P+simp} and Corollary~\ref{Matrices}.

\subsection{Effective aspect}\label{subsec:linnik}

In this subsection we prove Theorem \ref{Linn1} using~\cite{FJ}*{Theorem 2.3}. Note that \cite{FJ}*{Theorem 2.3} in its generality assumes the Artin conjecture. In our case (where the groups considered are $p$-groups) Artin's conjecture is a proven result (see \cite{Bray}*{Chapter 2, Corollary 3.5}).

 \begin{proof}[Proof of Theorem~\ref{Linn1}]
 
Let $C_b$ denote the conjugacy class of $b$ in $G$, let \( s = \frac{|G|}{|C_b|}\mathds 1_{C_b} \) and \( s_\ell := s \circ f_\ell \) for \( \ell \geq 1 \).

For \( \ell \) coprime to \( p \), we have \( (t \circ f_\ell)^+ = t^+ \circ f_\ell = 0 \) by~\eqref{eq:indnulle},
and for \( \ell = pk \) a multiple of \( p \), we have \( (t \circ f_\ell) = -s_\ell = -(s_p \circ f_k) \). Based on divisibility properties of the indices appearing in the $\theta$ prime counting function (recall~\eqref{eq:phipsi}), we have:
\begin{align*} \theta(x;L/K;t) &= \sum_{\ell \geq 1} \mu(\ell) \psi\left(x^{\frac{1}{\ell}};L/K;t \circ f_\ell\right)
= \sum_{\ell \geq 1} \mu(\ell p) \psi\left(x^{\frac{1}{\ell p}};L/K;-(s_p \circ f_\ell)\right)\\
&= \sum_{\substack{\ell \geq 1 \\ \gcd(\ell,p) = 1}} \mu(\ell) \psi\left(x^{\frac{1}{\ell p}};L/K;s_p \circ f_\ell\right)\,.
 \end{align*}
Define
$u = \max \{ m \geq 1  \colon \exists x \in G,\, x^{p^m} \in C_b \} $ and, for \( 1 \leq k < u \), consider
\[ S_k := \sum_{1 \leq i \leq k} \theta(x^{\frac{1}{p^i}};L/K; s_{p^i} \circ f_i) - \theta(x;L/K;t)\,. \]
A simple induction on \( 1 \leq k < u \) shows that 
\[ S_k = -\sum_{\substack{\ell \geq 1 \\ \gcd(\ell,p) = 1}} \mu(\ell) \psi\left(x^{\frac{1}{\ell p^{k+1}}};L/K;s_{p^{k+1}} \circ f_\ell\right)\,. \]
In particular, for \( k = u-1 \), we have 
\[ S_{u-1} = -\sum_{\substack{\ell \geq 1 \\ \gcd(\ell,p) = 1}} \mu(\ell) \psi\left(x^{\frac{1}{\ell p^u}};L/K;s_{p^u} \circ f_\ell\right)\,. \]
If \( \ell = kp \) is a multiple of \( p \), then \( s_{p^u} \circ f_\ell = s_{p^{u+1}} \circ f_k \). Since \( \{x \in G \colon x^{p^{u+1}} \in C_b\} = \emptyset \) (by definition of \( u \)), we have \( s_{p^{u+1}} = 0 \)\,.

We conclude that 
\[ S_{u-1} = -\sum_{\ell \geq 1} \mu(\ell) \psi\left(x^{\frac{1}{\ell p^u}};L/K;s_{p^u} \circ f_\ell\right) = -\theta(x^{\frac{1}{p^u}};L/K;s_{p^u}) \,.\]
We have shown that 
\[ \theta(x;L/K;t) = \sum_{1 \leq k \leq u} \theta(x^{\frac{1}{p^k}};L/K;s_{p^k})\,, \]
from which we deduce by partial summation, 
\[ \pi(x;L/K;t) = \sum_{1 \leq k \leq u} \pi(x^{\frac{1}{p^k}};L/K;s_{p^k})\,. \]
This shows that \( \pi(x;L/K;t) \geq 0 \) for all \( x \). Moreover, we have \( \pi(x;L/K;t) > 0 \) as soon as \( \pi(x^{\frac{1}{p^k}};L/K;s_{p^k}) > 0 \) for some $k\in\{1,\ldots,u\}$.\\
Given a class function $g$ of $G$, denote, for all $\chi \in \text{Irr}(G)$, the fourier coefficient of $g$ at $\chi$ by \[\widehat{g}(\chi):=\langle \chi, g \rangle\, ,\]
and define the Littlewood norm of $g$ by \[\lambda(g):= \sum_{\chi\in \text{Irr}(G)}\chi(1)|\widehat{g}(\chi)|\, .\]
Since \( \widehat{s_p}(1) = r_p(C_b) > 0 \), we can apply~\cite{FJ}*{Th 2.3}: we choose 
\[ A = B \left( \frac{\lambda(s_p)}{\widehat{s_p}(1)} \log(rd_L+2)[K:\mathbb{Q}] \right)^{2p} \, ,\]
where \( B \) is a sufficiently large absolute constant. So that, for all \( x > A \), we have \( x^{\frac{1}{p}} > B \left( \frac{\lambda(s_p)}{\widehat{s_p}(1)} \log(\mathrm{rd}_L+2)[K:\mathbb{Q}] \right)^{2} \) and therefore according to \cite{FJ}*{Th 2.3} we have \
\[\pi(x^{\frac{1}{p}};L/K;s_p) > 0 \quad \text{ and } \quad \theta(x^{\frac{1}{p}};L/K;s_p) > 0 \,.
\]

Now, let \( D = \{x \in G \colon x^p \in C_b\} \); it is a conjugacy invariant set that we write \( D = \bigcup_{1 \leq i \leq r} F_i \), where the \( F_i \)'s are conjugacy classes of \( G \). We have
\[ \sum_{\chi} \chi(1) |\widehat{s_p}(\chi)| \leq \frac{1}{|C_b|} \sum_{1 \leq i \leq r} |F_i| \sum_{\chi} \chi(1) |\chi(F_i)| \leq \frac{|G|}{|C_b|} \sum_{i=1}^r \sqrt{|F_i|}\,. \]
Combining the Cauchy--Schwarz inequality and \( |D| = |C_b|r_p(C_b) \), we obtain
\[ \sum_{\chi} \chi(1) |\widehat{s_p}(\chi)| \leq \sqrt{r} |G| r_p(C_b)\,. \]
Since \( \widehat{s_p}(1) = r_p(C_b) \), we have 
\[ \frac{\lambda(s_p)}{\widehat{s_p}(1)} \leq \sqrt{r} |G|\,. \]
Therefore we can choose 
\[ A_1 = B(\sqrt{r}\log(\mathrm{rd}_L+2)[L:\mathbb{Q}])^{2p} \geq A\,. \]

\end{proof}


\section*{Appendix: proof of Theorem~\ref{Galois}}
Let $m\geq 2$, and define $\zeta:=\zeta_ {2^m}=\mathrm{e}^{(2i\pi)2^{-m}}$ and $K:=\Q(\zeta)$. Let $q\geq 3$ be a prime that is a sum of two squares: $q=a^2+b^2$ for some integers $a,b\geq 1$. We first note that since $q$ is ramified in $\Q(\sqrt{q})$ but unramified in $K$, then $\sqrt{q} \notin K$. We first prove the following Lemma.

\begin{lem}\label{Kummer}
    The polynomials $X^{2^m}-(a+ib)$ and $X^{2^m}-(a-ib)$ are irreducible in $K$. Moreover, if we let $D_1$ and $D_2$ be their respective splitting fields over $K$ then, $D_1 \cap D_2=K$ and $\Gal(D_1D_2/K)$ is isomorphic to $\ \mathbb{Z}/2^m\mathbb{Z} \times \mathbb{Z}/2^m \mathbb{Z}$.
\end{lem}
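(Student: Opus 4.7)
The plan is to combine Kummer theory (applicable since $K=\Q(\zeta_{2^m})$ contains a primitive $2^m$-th root of unity, and in particular $i$) with simple valuation arguments at the two primes of $\Z[i]$ above $q$.

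First I would show that $a\pm ib$ are not squares in $K$. Since $q\equiv 1\bmod 4$ splits in $\Z[i]$ as $q=(a+ib)(a-ib)$, the ideals $\frak{q}_1:=(a+ib)\Z[i]$ and $\frak{q}_2:=(a-ib)\Z[i]$ are distinct primes of $\Z[i]$. Because $q$ is odd it is unramified in $K/\Q$, so for any prime $\frak{Q}_1$ of $O_K$ above $\frak{q}_1$ one has $e(\frak{Q}_1/\frak{q}_1)=1$, hence $v_{\frak{Q}_1}(a+ib)=1$. This odd valuation precludes $a+ib$ from being a square in $K^\times$; by symmetry $a-ib$ is not a square either. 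Capelli's theorem (applied with $n=2^m$ and using $i\in K$, which makes the auxiliary ``$4\mid n$'' clause reduce to the already verified condition $a\pm ib\notin (K^\times)^2$) then gives the irreducibility of both $X^{2^m}-(a\pm ib)$ over $K$.

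Hence $[D_1:K]=[D_2:K]=2^m$ and each $D_j/K$ is a Kummer extension with Galois group $\Z/2^m\Z$. The crucial step is then to identify $\Gal(D_1D_2/K)$. By Kummer duality, if $H\subset K^\times/(K^\times)^{2^m}$ denotes the subgroup generated by the classes of $a+ib$ and $a-ib$, then $\Gal(D_1D_2/K)\simeq\mathrm{Hom}(H,\mu_{2^m})$. It therefore suffices to show that $H\simeq(\Z/2^m\Z)^2$, i.e. that any relation $(a+ib)^j(a-ib)^k=\gamma^{2^m}$ with $\gamma\in K^\times$ forces $2^m\mid j$ and $2^m\mid k$. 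Applying $v_{\frak{Q}_1}$ to such a relation, and using $v_{\frak{Q}_1}(a-ib)=0$ (which holds since $\frak{q}_1\neq \frak{q}_2$), one reads off $j=2^m v_{\frak{Q}_1}(\gamma)$, whence $2^m\mid j$; the symmetric computation at a prime $\frak{Q}_2$ of $O_K$ above $\frak{q}_2$ gives $2^m\mid k$.

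From this we deduce $[D_1D_2:K]=2^{2m}$ and $\Gal(D_1D_2/K)\simeq(\Z/2^m\Z)^2$. Since $D_1$ and $D_2$ are both Galois over $K$, the standard identity $[D_1D_2:K]\cdot[D_1\cap D_2:K]=[D_1:K]\cdot[D_2:K]$ applies and forces $D_1\cap D_2=K$. The only step requiring more than bookkeeping is the Kummer-theoretic interpretation of $\Gal(D_1D_2/K)$ as the dual of $H$; everything else is routine valuation computation made possible by the splitting of $q$ in $\Z[i]$.
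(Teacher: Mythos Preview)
Your argument is correct and takes a genuinely different route from the paper's. The paper shows $a\pm ib\notin (K^\times)^2$ by a Galois symmetry argument (complex conjugation forces both or neither to be squares) together with the observation that $\sqrt{q}\notin K$ since $q$ ramifies in $\Q(\sqrt q)$ but not in $K$; it then proves $D_1\cap D_2=K$ directly by examining the unique quadratic subextensions $K(\theta)$ and $K(\bar\theta)$ (with $\theta^2=a+ib$) and deriving the contradiction $\sqrt q=\theta\bar\theta\in K$. Your approach replaces both steps with a single uniform mechanism: the valuations at the two primes of $O_K$ above $q$ detect that the classes of $a+ib$ and $a-ib$ generate a free $(\Z/2^m\Z)$-module of rank $2$ inside $K^\times/(K^\times)^{2^m}$, and Kummer duality does the rest. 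Your method is cleaner and scales immediately (it would work verbatim with $2^m$ replaced by any $n$ with $\mu_n\subset K$), at the cost of invoking the full Kummer pairing rather than just the cyclicity of a single radical extension; the paper's argument is more hands-on and uses only the elementary fact that a cyclic extension has a unique subextension of each admissible degree.
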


\begin{proof}
Since $a+ib$ and $a-ib$ are either both squares or both non-squares (since $K/\mathbb{Q}$ is a Galois extension), they cannot be squares, as otherwise $q=(a+ib)(a-ib)$ would be a square. Therefore, both $X^{2^m}-(a+ib)$ and $X^{2^m}-(a-ib)$ are irreducible in $K$ (see \cite{Algebra}*{Th. 6.9.1}).

By Kummer theory, $\Gal(D_1/K)$ and $\Gal(D_2/K)$ are both isomorphic to $\mathbb{Z}/2^m\mathbb{Z}$. In particular, $D_1,D_2$ are cyclic extensions of $K$. Then, by Galois theory, there exist unique intermediate extensions $K_1$ of $D_1/K$ and $K_2$ of $D_2/K$ such that $[K_1:K]=[K_2:K]=2$. Note that $K_1$ and $K_2$ are respectively generated by a square root of $a+ib$ and $a-ib$. 

Let $L=D_1 \cap D_2$ and assume by contradiction that $L\ne K$. Then $L$ is a common intermediate subextension of $D_1/K$ and $D_2/K$, and therefore $L$ is a cyclic extension of $K$ of degree $\geq 2$. Let $F$ be the unique intermediate extension of $L/K$ of degree $2$ over $K$. We have $K_1=F=K_2$. Let $\theta$ be a complex square root of $a+ib$ and thus $\overline{\theta}$ is a square root of $a-ib$. Since $K_1=K(\theta)$ and $K_2=K(\overline{\theta})$ and $[K_1K_2:K]=[K_1:K]=2$, we deduce that $\overline{\theta}\cdot\theta \in K$. Therefore, $(\overline{\theta}\cdot\theta)^2=\overline{\theta}^2\cdot\theta^2=(a-ib)(a+ib)=q$ is a square in $K$, which contradicts what was observed before the statement of Lemma~\ref{Kummer}. We conclude, as desired, that $L=K$ 
\end{proof}

\begin{proof}[Proof of Theorem~\ref{Galois}]

The polynomial
$P=(X^{2^m}-(a+ib))(X^{2^m}-(a-ib))(X^{2^m}-1)$ has coefficients
 in $\mathbb{Z}$, and all its roots are elements of $D_1D_2$. Moreover $D_1D_2$ is generated over $\mathbb{Q}$ by the roots of $P$ and therefore $D_1D_2$ is a splitting field of $P$ over $\mathbb{Q}$, making it Galois over $\mathbb{Q}$. Let $\pi \in \mathrm{Aut}(D_1D_2)$ be the restriction of complex conjugation to $D_1D_2$. We also denote $G_1=\Gal(D_1D_2/D_2)$ and $G_2=\Gal(D_1D_2/D_1)$.

Recall that restricting automorphisms to $D_1$ and $D_2$ induces two isomorphisms
\[
G_2\rightarrow \Gal(D_2/K)\,,\qquad G_1\rightarrow \Gal(D_1/K)\,.
\]
We have $\Gal(D_1D_2/K)=G_1G_2$ which is isomorphic (canonically) to $G_1\times G_2$. If $H$ is a non-trivial proper subgroup of $\ G_1$, then it is isomorphic to $\mathbb{Z}/2^n\mathbb{Z}$ for some $n\in\{1,\ldots,m-1\}$. Let $\theta_m \in D_1$ be a $2^m$-th root of $a+ib$, then $\overline{\theta_m}$ a $2^m$-th root of $a-ib$. Let $\gamma_1 \in G_1$ and $\gamma_2 \in G_2$ be such that $\gamma_1(\theta_m)=\zeta\theta_m$ and $\gamma_2(\overline{\theta_m})=\overline{\zeta \theta_m}$. Since $\zeta$ and $\overline{\zeta}$ are both $2^m$-th primitive roots of unity, $\gamma_1$ and $\gamma_2$ are generators of $G_1$ and $G_2$, respectively. Moreover $\pi \gamma_1 \pi (\overline{\theta_m})=\pi \gamma_1(\theta_m)=\pi(\zeta\theta_m)=\overline{\zeta\theta_m}$, and the restriction of $\pi \gamma_1 \pi $ to $D_1$ is the identity. We deduce that $\pi \gamma_1 \pi=\gamma_2$, showing that $\gamma_1$ and $\gamma_2$ are conjugate in $G^+$. Since $G_1$ is cyclic, $\gamma_1^{2^{m-n}}$ is a generator of $H$. Letting $\sigma_1=\gamma_1^{2^{m-n}}$ and $\sigma_2=\gamma_2^{2^{m-n}}$ we conclude that $G$ (resp. $G^+$) satisfies $\mathcal{P}$ (resp. $\mathcal{P^+}$) relatively to $(\sigma_1,\sigma_2)$.
\end{proof}

Keeping the notation as in Theorem~\ref{Galois}, we conclude by giving the structure of $\Gal(D_1D_2/\Q)$ which, by Proposition~\ref{Inj}, satisfies $\mathcal{P}^+$ relatively to $(\sigma_1,\sigma_2)$.

\begin{prop}
    We have an isomorphism $$\Gal(D_1D_2/\Q) \simeq ((\Z/2^m\Z \times \Z/2^m \Z)\rtimes \Z/2\Z)\rtimes \Z/2^{m-2}\Z\,.$$
\end{prop}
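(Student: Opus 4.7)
The plan is to build $\Gal(D_1D_2/\Q)$ from the tower $\Q \subset k \subset K \subset D_1D_2$, peeling off one semidirect product layer at a time. Setting $N := \Gal(D_1D_2/K)$, Lemma~\ref{Kummer} identifies $N \simeq \Z/2^m\Z \times \Z/2^m\Z$, with factors generated by $\gamma_1$ and $\gamma_2$. The proof of Theorem~\ref{Galois} already shows $\pi \gamma_1 \pi = \gamma_2$ for the complex conjugation $\pi$, so $\pi$ acts on $N$ by swapping the two factors. Since $\pi$ has order $2$ and generates $\Gal(K/k)$, it gives a splitting of the short exact sequence $1 \to N \to \Gal(D_1D_2/k) \to \Gal(K/k) \to 1$, hence $\Gal(D_1D_2/k) \simeq (\Z/2^m\Z \times \Z/2^m\Z) \rtimes \Z/2\Z$, the inner piece of the claimed isomorphism.

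Next I would construct an explicit lift $\tau \in \Gal(D_1D_2/\Q)$ of a generator of $\Gal(k/\Q) \simeq \Z/2^{m-2}\Z$ (for $m \geq 3$; the case $m=2$ is already handled by the previous paragraph). Take $g = 5 \in (\Z/2^m\Z)^*$, a generator of the cyclic factor of order $2^{m-2}$ in $(\Z/2^m\Z)^* \simeq \Z/2\Z \times \Z/2^{m-2}\Z$. Since $5 \cdot 2^{m-2} \equiv 2^{m-2} \pmod{2^m}$, the automorphism $\sigma_g$ of $K$ defined by $\sigma_g(\zeta) = \zeta^5$ fixes $i = \zeta^{2^{m-2}}$ and hence fixes $a \pm ib$. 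This allows me to define $\tau$ on $D_1D_2$ by $\tau(\zeta) = \zeta^5$, $\tau(\theta_m) = \theta_m$, and $\tau(\overline{\theta_m}) = \overline{\theta_m}$. A direct check on these generators shows that $\pi\tau$ and $\tau\pi$ agree (both send $\zeta \mapsto \zeta^{-5}$, $\theta_m \mapsto \overline{\theta_m}$ and $\overline{\theta_m} \mapsto \theta_m$), so $\pi$ and $\tau$ commute in $\Gal(D_1D_2/\Q)$. A similar computation yields $\tau\gamma_i\tau^{-1} = \gamma_i^5$ for $i = 1, 2$, so $\tau$ acts on $N$ by multiplication by $5$ on each coordinate, and this action commutes with the swap action of $\pi$.

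Finally, $\tau$ has order exactly $2^{m-2}$ in $\Gal(D_1D_2/\Q)$: its restriction to $K$ is $\sigma_g$, which has that order, and since $\tau$ fixes $\theta_m$ and $\overline{\theta_m}$, no nontrivial power of $\tau$ lies in $\Gal(D_1D_2/k)$. Comparing orders, $|N \rtimes \langle\pi\rangle| \cdot |\langle\tau\rangle| = 2^{2m+1} \cdot 2^{m-2} = 2^{3m-1} = [D_1D_2 : \Q]$, so $\langle\tau\rangle$ is a complement to $\Gal(D_1D_2/k)$, and the two compatible actions yield $\Gal(D_1D_2/\Q) \simeq ((\Z/2^m\Z \times \Z/2^m\Z) \rtimes \Z/2\Z) \rtimes \Z/2^{m-2}\Z$. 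The main delicate point is producing a lift $\tau$ that simultaneously has order exactly $2^{m-2}$ and commutes with $\pi$: a generic extension $\tau(\theta_m) = \zeta^c\theta_m$ would require $2$-adic computations with the geometric sum $(5^{2^{m-2}}-1)/(5-1)$ in order to force the correct period, whereas the symmetric choice $c = 0$ (and similarly for $\overline{\theta_m}$) avoids this entirely.
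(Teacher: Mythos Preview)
Your proof is correct and follows essentially the same strategy as the paper: construct an element $\tau$ lifting $\zeta\mapsto\zeta^5$ that fixes both $\theta_m$ and $\overline{\theta_m}$, verify the conjugation relations $\tau\gamma_i\tau^{-1}=\gamma_i^5$ and $\tau\pi=\pi\tau$, and conclude by a cardinality count. The only notable difference is in how $\tau$ is produced: the paper first takes an arbitrary extension $\tau_1$ of $\sigma_g$ to $D_1D_2$ (which exists because $D_1D_2/\Q$ is Galois) and then corrects it by an element of $\Gal(D_1D_2/K)$ to force $\tau(\theta_m)=\theta_m$ and $\tau(\overline{\theta_m})=\overline{\theta_m}$, whereas you define $\tau$ directly on the generators $\zeta,\theta_m,\overline{\theta_m}$. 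Your shortcut is legitimate since Lemma~\ref{Kummer} gives $D_1D_2\simeq K[X,Y]/(X^{2^m}-(a+ib),\,Y^{2^m}-(a-ib))$ as a $K$-algebra and $\sigma_g$ fixes $a\pm ib$, but it would be worth saying this explicitly rather than leaving the well-definedness of $\tau$ implicit.
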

\begin{proof}
We keep the notation as in the proof of Theorem \ref{Galois}.

We have $\Gal(K/\Q)=\langle \pi, \tau_0 \rangle$ where $\tau_0(\zeta )=\zeta^5$. Let $\tau_1$ be an extension of $\tau_0$ to $D_1D_2$. The coset $\tau_1 \Gal(D_1D_2/K)$ contains exactly the automorphisms of $D_1D_2$ that restrict to $\tau_0$.

We claim that there exists $\tau \in G^+$ of order $2^{m-2}$ extending $\tau_0$. Indeed, as $\tau_0$ fixes $\Q(i)$ (recall that $\tau_0(i)=i$), $\tau_1$ permutes the roots of $X^{2^m}-(a+ib)$ (resp. $X^{2^m}-(a-ib)$). Moreover there exist $d_1,d_2 \in \Z$ such that $\tau_1(\theta_m)=\zeta^{d_1}\theta_m$ and $\tau_1(\overline{\theta_m})=\zeta^{d_2}\overline{\theta_m}$. Also since $5$ is invertible modulo $2^m$, there exist $r_1,r_2 \in \Z$ such that $5r_2+d_2 \equiv 5r_1+d_1 \equiv 0 \pmod{2^m}$. Set $\tau=\tau_1 \gamma_1 ^{r_1} \gamma_2 ^{-r_2}$. By construction, $\tau(\theta_m)=\theta_m$ and $\tau(\overline{\theta_m})=\overline{\theta_m}$. In particular we have $\tau^{2^{m-2}}=\Id_{D_1D_2}$. However $\Ord(\tau)\geq 2^{m-2}$ since $\tau$ extends $\tau_0$, and therefore $\Ord(\tau)=2^{m-2}$.

We have $\Gal(D_1D_2/\Q)=\langle \gamma_1, \gamma_2, \pi, \tau \rangle$, moreover, as $D_1D_2=\Q(\zeta,\theta_m,\overline{\theta_m})$ one easily checks the equalities
\[
\tau \pi \tau^{-1}=\pi\,,\qquad \tau \gamma_1 \tau^{-1}=\gamma_1^5\,,\qquad \tau \gamma_2 \tau^{-1}=\gamma_2^5\,.
\]
Therefore one has $G^+ \lhd \Gal(D_1D_2/\Q)$ and thus $\langle \gamma_1, \gamma_2, \pi, \tau \rangle=\langle \gamma_1, \gamma_2, \pi \rangle\cdot \langle \tau \rangle$. However, $G^+=\langle \gamma_1,\gamma_2, \pi \rangle \simeq (\Z/2^m\Z \times \Z/2^m \Z)\rtimes \Z/2\Z$. Therefore, for cardinality reasons (see Lemma \ref{card}), $\langle \gamma_1, \gamma_2, \pi \rangle \cap \langle \tau \rangle=\{1\}$. Then the product $\langle \gamma_1, \gamma_2, \pi \rangle\cdot \langle \tau \rangle$ is semi-direct which completes the proof.
\end{proof}

\begin{rk}
   In the case $m=2$, we recover an explicit solution to the inverse Galois problem for the group $(\Z/4\Z \times \Z/4\Z)\rtimes \Z/2\Z $ over $\Q$. Recall that by Proposition \ref{min-ab}, this is the smallest group containing $\Z/4\Z\times \Z/2\Z$ that satisfies $\mathcal{P^+}$.
    
\end{rk}

{\bf Acknowledgments.}
I would like to thank Florent Jouve for his constant support and patience, providing invaluable guidance and insights throughout the work that led to this paper. I would also like to thank Daniel Fiorilli for several discussions that significantly enriched my research.

\begin{bibdiv} 
\begin{biblist}


\bib{Bail}{article}{
   author={Bailleul, Alexandre},
   title={Chebyshev's bias in dihedral and generalized quaternion Galois
   groups},
   journal={Algebra Number Theory},
   volume={15},
   date={2021},
   number={4},
   pages={999--1041},
}

%

\bib{Berkovich}{book}{
   author={Berkovich, Yakov},
   title={Groups of prime power order. Vol. 1},
   series={De Gruyter Expositions in Mathematics},
   volume={46},
   note={With a foreword by Zvonimir Janko},
   publisher={Walter de Gruyter GmbH \& Co. KG, Berlin},
   date={2008},
   pages={xx+512},
   isbn={978-3-11-020418-6},
   review={\MR{2464640}},
   doi={10.1515/9783110208238.512},
}

\bib{BN}{article}{
   author={Blackburn, Norman},
   title={Finite groups in which the non normal subgroups have nontrivial
   intersection},
   journal={J. Algebra},
   volume={3},
   date={1966},
   pages={30--37},
}

\bib{Bray}{book}{
   author={Bray, Henry G.},
   author={Deskins, W. E.},
   author={Johnson, David},
   author={Humphreys, John F.},
   author={Puttaswamaiah, B. M.},
   author={Venzke, Paul},
   author={Walls, Gary L.},
   title={Between nilpotent and solvable},
   note={Edited and with a preface by Michael Weinstein},
   publisher={Polygonal Publ. House, Washington, NJ},
   date={1982},
}

\bib{Chebyshev}{article}{
   author={Chebyshev, Pafnouti},
   title={Lettre de M. le Professeur Tchébychev à M. Fuss sur un nouveau théorème relatif aux nombres premiers contenus dans les formes 4n + 1 et 4n + 3},
   journal={Bull. Classe Phys. Acad. Imp. Sci. St. Petersburg},
   pages={208},
   year={1853},
}

\bib{Dev}{article}{
   author={Devin, Lucile},
   title={Chebyshev's bias for analytic $L$-functions},
   journal={Math. Proc. Cambridge Philos. Soc.},
   volume={169},
   date={2020},
   number={1},
   pages={103--140},
}

\bib{Feit}{article}{
   author={Feit, Walter},
   author={Seitz, Gary M.},
   title={On finite rational groups and related topics},
   journal={Illinois J. Math.},
   volume={33},
   date={1989},
   number={1},
   pages={103--131},
   issn={0019-2082},
   review={\MR{0974014}},
}

\bib{FJ}{article}{
  author={Fiorilli, Daniel},
  author={Jouve, Florent},
  title={Distribution of Frobenius elements in families of Galois extensions},
  eprint={https://doi.org/10.1017/S1474748023000154},
  status={forthcoming, J. Inst. Math. Jussieu},
}

\bib{FJ2}{article}{
   author={Fiorilli, Daniel},
   author={Jouve, Florent},
   title={Unconditional Chebyshev biases in number fields},
   journal={J. \'{E}c. polytech. Math.},
   volume={9},
   date={2022},
   pages={671--679},
}

\bib{Hall}{book}{
   author={Hall, Marshall, Jr.},
   title={The theory of groups},
   publisher={The Macmillan Company, New York},
   date={1959},
}



\bib{Kac95}{article}{
   author={Kaczorowski, Jerzy},
   title={On the distribution of primes (mod $4$)},
   journal={Analysis},
   volume={15},
   date={1995},
   number={2},
   pages={159--171},
}

\bib{Algebra}{book}{
   author={Lang, Serge},
   title={Algebra},
   series={Graduate Texts in Mathematics},
   volume={211},
   edition={3},
   publisher={Springer-Verlag, New York},
   date={2002},
}


\bib{Ng}{article}{
   author={Ng, Nathan},
   title={Limiting distributions and zeros of Artin L-functions},
   type={PhD Thesis},
   year={2000},
   eprint={https://www.cs.uleth.ca/~nathanng/RESEARCH/phd.thesis.pdf},
   publisher= {University of British Columbia},
}

\bib{RS94}{article}{
   author={Rubinstein, Michael},
   author={Sarnak, Peter},
   title={Chebyshev's bias},
   journal={Experiment. Math.},
   volume={3},
   date={1994},
   number={3},
   pages={173--197},
}




\end{biblist}

\end{bibdiv}

\end{document}